\DeclareMathOperator{\Aut}{Aut}
\DeclareMathOperator{\End}{End}
\DeclareMathOperator{\Emb}{Emb}
\DeclareMathOperator{\Age}{Age}
\DeclareMathOperator{\GL}{GL}
\DeclareMathOperator{\dom}{{dom}}
\DeclareMathOperator{\qftp}{{qftp}}
\newcommand{\forkindep}[1][]{%
  \mathrel{
    \mathop{
      \vcenter{
        \hbox{\oalign{\noalign{\kern-.3ex}\hfil$\vert$\hfil\cr
              \noalign{\kern-.7ex}
              $\smile$\cr\noalign{\kern-.3ex}}}
      }
    }\displaylimits_{#1}
  }
}
\newcommand{\abs}[1]{\lvert{#1}\rvert}
\newcommand{\bN}{\mathbf N}
\newcommand{\bZ}{\mathbf Z}
\newcommand{\bQ}{\mathbf Q}
\newcommand{\cK}{\mathcal K}
\newcommand{\restr}{\mathord{\upharpoonright}}
\DeclareMathOperator{\Th}{{Th}}
\DeclareMathOperator{\id}{{id}}
\newtheorem*{mainthm}{Main Theorem}
\newtheorem{theorem}{Theorem}
\numberwithin{theorem}{section}
\newtheorem{lemma}[theorem]{Lemma}
\newtheorem{fact}[theorem]{Fact}
\newtheorem{proposition}[theorem]{Proposition}
\newtheorem{question}[theorem]{Question}
\newtheorem{corollary}[theorem]{Corollary}
\newtheorem{clm}{Claim}
\newtheorem*{clm*}{Claim}
\theoremstyle{definition}
\newtheorem{definition}[theorem]{Definition}
\newtheorem{example}[theorem]{Example}
\theoremstyle{remark}
\newtheorem{remark}[theorem]{Remark}
\newenvironment{clmproof}[1][\proofname]{\proof[#1]}{\endproof}
\title{Inner ultrahomogeneous groups}
\subjclass[2020]{03C45, 03C15, 03E15, 20F05, 20A15}
\keywords{ample generic automorphisms, Hall's universal group, recursively presentable group, finitely presentable group, Fraïssé limit}
\author{Tomasz Rzepecki}
\thanks{The author was supported by the Institute of Mathematics, Czech Academy of Sciences (RVO 67985840) and the EXPRO 20-31529X grant (Czech Science Foundation).}
\address{Institute of Mathematics, Czech Academy of Sciences, Czech Republic and Uniwersytet Wrocławski, Poland}
\email{tomasz.rzepecki@math.uni.wroc.pl\\ORCID: \href{https://orcid.org/0000-0001-9786-164}{\includegraphics[height=\fontcharht\font`\B]{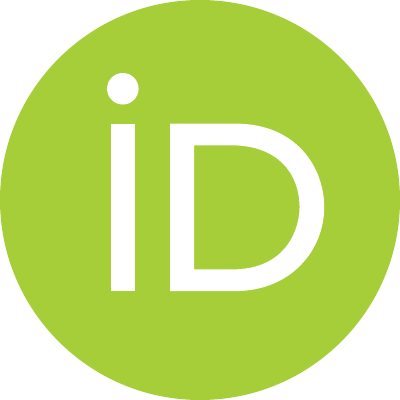}0000-0001-9786-1648}}
\begin{document}
    \begin{abstract}
        We define and study the class of inner ultrahomogeneous groups, which includes Hall's universal group and the universal locally recursively presentable group. We provide simple criteria for ample generic automorphisms, straight maximality, uniform simplicity and divisibility (all of which apply to both Hall's universal group and the universal locally recursively presentable group). We show that such groups of infinite exponent are not $\aleph_0$-saturated, their theories are not small, not rosy and have TP$_2$+SOP+IP$_n$ for all $n$. This strengthens and generalises known results about ample generic automorphisms and unstability of Hall's universal group. We also show that the exponents of finite exponent inner ultrahomogeneous groups are uniformly bounded, and we provide a series of examples of inner ultrahomogeneous groups.
    \end{abstract}
    \maketitle
    \section{Introduction}
    Hall's universal group $\Gamma_F$ was introduced by Phillip Hall in \cite{Hal59}. He showed that it is universal for the class of countable locally finite groups and homogeneous in a rather strong sense. While Hall did not explicitly use the Fraïssé construction (developed by Fraïssé around the same time), he essentially showed that the class of finite groups is a Fraïssé class and described its limit.

    $\Gamma_F$ has a rather curious ultrahomogeneity property, which we call inner ultrahomogeneity: every finite partial automorphism can be extended to not only an automorphism of the whole group, but to an inner automorphism (see Definition~\ref{dfn:inner_uh}). This property is related to a strengthening of the extension property of partial automorphisms (EPPA, also called Hrushovski's property) which we call here inner EPPA (see Definition~\ref{dfn:inner_eppa}) and which, due to Hall, holds for the class of finite groups.

    Another inner ultrahomogeneous group is $\Gamma_R$, the generic locally recursively presentable group (the Fraïssé limit of the class of finitely generated, recursively presentable groups, or equivalently, of finitely presentable groups). This group, while very natural, seems to not have been studied before, although its existence is folklore. We will describe it in more detail in Section~\ref{sec:examples}.

    Independently in \cite{Song19}, \cite{Sin17} and implicitly \cite{Iva06} (and also in unpublished work by Grebík and Geschke), the authors showed that Hall's universal group has ample generic automorphisms. This is a dynamical property of the automorphism group as a permutation group which is of interest in both model theory and dynamics.

    \cite{Song19} also observed that the group is unstable (in the sense of model theory). In fact, the proof of \cite[Proposition 3.1]{SU06} easily implies that many groups are SOP$_3$ (which is a stronger property than unstability), including $\Gamma_R$ and $\Gamma_F$. In contrast, \cite[Theorem 3.2]{SU06} shows that a suitably chosen ``universal'' group is NSOP$_4$ in the Robinson sense, which is a tameness property.

    It turns that inner ultrahomogeneity, along with some very natural hypotheses about the underlying Fraïssé class, suffices to prove many properties of Hall's group (both known before and new) in greater generality, for instance divisibility, uniform (group-theoretic) simplicity, ample generic automorphisms, as well as (first-order) unstability, independence property, TP$_2$, and straight maximality, using relatively simple formulas related to commuting. This implies in particular that many groups, including those considered in \cite{SU06}, are actually extremely wild with respect to full first order logic.

    We also observe that some of these groups, including $\Gamma_R$, have a natural stationary independence relation (see Proposition~\ref{prop:free_stationary_indep}), although this is not strictly related to inner ultrahomogeneity.

    The main results are summarised in \hyperref[mainthm]{Main Theorem} below, from which we deduce in Section~\ref{sec:examples} the aforementioned properties of Hall's universal group $\Gamma_F$, the group $\Gamma_R$ and other inner ultrahomogeneous groups.

    \begin{mainthm}
        \phantomsection
        \label{mainthm}
        For every inner ultrahomogeneous group $\Gamma$:
        \begin{enumerate}
            \item
            \label{it:mainthm:infinite}
            If $\Gamma$ has more than $6$ elements, then it is infinite and not $\aleph_0$-saturated (in particular, it is not $\aleph_0$-categorical),
            \item
            \label{it:mainthm:simplicity}
            If $\Gamma$ is either torsion and embeds all finite groups, or $\Age(\Gamma)$ is closed under $*\bZ$, or $\Gamma$ is torsion-free, then it is uniformly simple.
            \item
            \label{it:mainthm:ample_gen}
            If $\Age(\Gamma)$ is closed under either $\times$ or under $\times\bZ$ (which holds in particular if it is closed under free products or finitary HNN-extensions) and $\Gamma$ is countable, then it has ample generic automorphisms.
            \item
            \label{it:mainthm:centre}
            If the centre of $\Gamma$ is nontrivial, then its only nonidentity element is the unique element of $\Gamma$ of order $2$. In this case, $\Gamma$ has $2$ elements or is not torsion.
            \item
            \label{it:mainthm:finite_exp}
            If $\Gamma$ has finite exponent, then it has a finitely generated subgroup $A$ such that $\Aut(\Gamma/A)$ is trivial (i.e.\ $\Aut(\Gamma)$ is discrete) and every automorphism of $\Gamma$ is inner; in particular, if $\lvert\Gamma\rvert\neq 2$, then $\Gamma\cong \Aut(\Gamma)$.
            \item
            \label{it:mainthm:fin_exp_char}
            $\Gamma$ does not have finite exponent if and only if it contains a copy of $(\bZ/2\bZ)^6$ or an abelian subgroup with at least $2^{100}$ elements.
            \item
            \label{it:mainthm:not_finite_exp}
            If $\Gamma$ is not of finite exponent, then:
            \begin{itemize}
                \item
                if it is torsion, then it embeds every countable torsion abelian group,
                \item
                if it is not torsion, then it embeds every countable torsion-free abelian group and every countable free group,
                \item
                it does not admit elimination of quantifiers,
                \item
                it has the strict order property and the tree property of the second kind (in particular, it is unstable and has the independence property), as well as the $n$-independence property for all $n$,
                \item
                the theory of $\Gamma$ is not small.
            \end{itemize}
            \item
            \label{it:mainthm:def_subg}
            If $\Age(\Gamma)$ has disjoint amalgamation (e.g.\ if it is closed under amalgamated free products or it is the class of finite groups), then for every finite $A_0\subseteq A$, $C^2(A_0)=\langle A_0\rangle$. In particular, for each $n$, the family of $n$-generated subgroups of $\Gamma$ is uniformly definable.
            \item
            \label{it:mainthm:str_max}
            If $\Gamma$ has elements of all finite orders (in particular, if it is torsion and not of finite exponent), then it is straightly maximal.
        \end{enumerate}
    \end{mainthm}
    \begin{proof}
        In \eqref{it:mainthm:infinite}, that $\Gamma$ has to be infinite if it has more than $6$ elements is true even if we only assume that every automorphism of a finite subgroup of $\Gamma$ extends to an inner automorphism, see \cite{HoltMO}. The fact that it is not $\aleph_0$-saturated and its theory is not $\aleph_0$-categorical is Corollary~\ref{cor:not_saturated} and Corollary~\ref{cor:not_categorical}.

        \eqref{it:mainthm:simplicity} follows from Proposition~\ref{prop:finite_groups_simple}, Proposition~\ref{prop:free_prod_simple} and Remark~\ref{rem:conj_order}.

        \eqref{it:mainthm:ample_gen} is Theorem~\ref{thm:ample_generics}.

        \eqref{it:mainthm:centre} is Proposition~\ref{prop:center_at_most_2} and Corollary~\ref{cor:torsion_trivial_centre}.

        \eqref{it:mainthm:finite_exp} is Proposition~\ref{prop:fin_exp_almost_fg} and Remark~\ref{rem:fin_exp_trivial_centre}.

        \eqref{it:mainthm:fin_exp_char} follows from Theorem~\ref{thm:char_of_finite_exp}.

        \eqref{it:mainthm:not_finite_exp} follows from Corollary~\ref{cor:trichotomy}, Proposition~\ref{prop:no_qe}, Theorem~\ref{thm:inf_exp_untame} and Proposition~\ref{prop:not_small}.

        \eqref{it:mainthm:def_subg} is Proposition~\ref{prop:subgroup_definability}  (and Fact~\ref{fct:finite_amalg} for the locally finite case).

        \eqref{it:mainthm:str_max} follows from Theorem~\ref{thm:straight_maximality} and \eqref{it:mainthm:not_finite_exp}
    \end{proof}

    \subsection*{Structure}
    In the second section, we very briefly recall some of the elementary notions and fundamental observations used in the paper.
    In the third section, we introduce the notions of inner ultrahomogeneity and inner EPPA, and prove some initial lemmata which will be of importance in later sections.
    The fourth section is dedicated to various algebraic properties of inner ultrahomogeneous groups. In particular, it contains a characterisation of finite exponent inner ultrahomogeneous groups, and a trichotomy related to the existence of various abelian subgroups (see Corollary~\ref{cor:trichotomy}). It also discusses simplicity, divisibility and normal subgroups, particularly the centre.
    The fifth section is devoted to model-theoretic properties of inner ultrahomogeneous groups, and essentially showing that under some weak assumptions they lie on the wild side of all sorts of dividing lines in model theory.
    The sixth section is about providing criteria showing that many groups of this sort have ample generic automorphisms.
    Finally, the seventh section contains various examples of inner ultrahomogeneous groups and a discussion of their properties, mostly deduced via \hyperref[mainthm]{Main Theorem}.

    \section{Preliminaries}
    \subsection{Some conventions and notation}
    Given a group $G$ and $A\subseteq G$, we write $C(A)$ for the centraliser of $A$ and $C^2(A)$ for the double centraliser, i.e.\ $C(C(A))$; when $A\leq G$, we will write $N(A)$ for the normaliser (the background group will always be clear from the context). We will write $Z(G)$ for the centre of $G$.

    Given $g,h\in G$, we will use the exponentiation convention for conjugation: $g^h=h^{-1}gh$, so that it is written as a right action, i.e.\ $(g^{h_1})^{h_2}=g^{h_1h_2}$ (although in all but a handful of places in the paper, the statements are essentially agnostic about the direction of conjugation).

    This gives a homomorphism $G\to \Aut(G)$ given by $h\mapsto (g\mapsto g^{h^{-1}})$, and for $A\leq G$, this induces a homomorphism $N(A)\to \Aut(A)$ whose kernel is $C(A)$.

    We will sometimes implicitly refer to the conjugation action and say that e.g.\ $h$ \emph{fixes} $g$ if it commutes with it (so that $g^h=g$), it \emph{doubles} $g$ if $g^h=g^2$ and it \emph{inverts} $g$ if $g^h=g^{-1}$.

    For a group $G$, we will write $G^{\oplus \bN}$ for the direct sum/restricted product of countably infinitely many copies of $G$.

    \subsection{Fraïssé theory and EPPA}
    We will routinely use Fraïssé-theoretic terminology in the paper. For a detailed account, see e.g.\ \cite[Section 7.1]{Hod}. Most important definitions and facts are the following.

    By a \emph{hereditary class} we will mean a class of finitely generated structures in a fixed signature (in our case, usually, in the language of groups) which is closed under taking finitely generated substructures and isomorphism.

    Given a hereditary class $\cK$, the \emph{joint embedding property} (shortly, JEP) means that any two elements of $\cK$ embed into another element of $\cK$. Similarly, \emph{amalgamation property} (AP) means that for any three structures $A,B,C\in\cK$ and fixed embeddings of $A$ into $B$ and $C$, we can find embeddings of $B$ and $C$ into some $D\in\cK$ such that the two induced embeddings of $A$ into $D$ coincide.

    We say that a hereditary class is \emph{essentially countable} if there are countably many isomorphism types of elements of $\cK$.

    The \emph{age} of a structure $M$, denoted $\Age(M)$, is the class of finitely generated structures which embed into $M$.

    A \emph{finite partial isomorphism} $M\to N$ is either an isomorphism between two finitely generated substructures of $M$ and $N$ or a finite partial function from $M$ to $N$ which preserves the truth of all quantifier-free formulas (the two notions are essentially equivalent). When $M=N$, we call it a \emph{finite partial automorphism}. A partial isomorphism and partial automorphism is defined the same way, only without the cardinality restrictions.

    We say that a structure $M$ is \emph{ultrahomogeneous} if every finite partial automorphism of $M$ can be extended to an automorphism of $M$.

    Now, given a structure $M$, we have the following:
    \begin{itemize}
        \item
        $\Age(M)$ is a hereditary class with JEP,
        \item
        if $M$ is ultrahomogeneous, then $\Age(M)$ has the AP,
        \item
        if $M$ is countable, then $\Age(M)$ is essentially countable,
        \item
        if $\cK$ is a hereditary class which is essentially countable, has the JEP and AP, then there is a unique countable ultrahomogeneous structure whose age is $\cK$, which we dub the \emph{Fraïssé limit of $\cK$}.
    \end{itemize}

    The notion of EPPA (extension property for partial automorphisms, also called Hrushovski's property) will not be used directly in this paper, but we recall one variant here, since it motivates the crucial for us notion of inner EPPA (see Definition~\ref{dfn:inner_eppa}).
    \begin{definition}
        We say that a hereditary class $\cK$ has 1-EPPA if for any $A\in\cK$ and finite partial automorphism $p$ of $A$, there is a $B\in\cK$ extending $A$ and a $\sigma\in\Aut(B)$ which extends $p$.
    \end{definition}

    \subsection{Amalgamated free products and HNN extensions}
    The notions of free products and HNN extensions are very relevant to some of the main motivating examples (although the main technical results do not rely on them as much). We recall some of the most important notions here. For more background, see \cite{LS01}. Some rudimentary Bass-Serre theory will also be useful for some observations, see e.g.\ \cite{Bas93} for background on that.

    Let $A,B,C$ be groups with fixed embeddings $A\to B,C$. Then the amalgamated free product $B*_AC$ is the unique (up to isomorphism) group in which $B,C$ embed in such a way that any homomorphisms $B\to D$, $C\to D$ which agree on (the images of) $A$ extend uniquely to a homomorphism $B*_AC\to D$. We will call the amalgamated free product \emph{finitary} if the base $A$ is finitely generated.
    Note that if we identify $B$ and $C$ with their images in $B*_AC$, then $B\cap C=A$.
    When $A$ is trivial, we write simply $B*C$ and we call this the \emph{free product} of $B$ and $C$.

    If $G$ is a group and $\alpha$ is a partial automorphism $G\to G$, then the HNN-extension $G*_\alpha=\langle G,t\rangle$ of $G$ by $\alpha$ is the universal extension of $G$ by a new generator (called a \emph{stable letter}) $t$ such that for $g\in\dom \alpha$ we have $g^t=\alpha(g)$ (so that $G*_p$ has a unique homomorphism onto any other such group). We call a HNN extension \emph{finitary} if $p$ is a finite partial automorphism (i.e.\ its domain is finitely generated).
    We similarly define HNN extensions given by an arbitrary family of partial automorphisms of $G$ (in which case, there is a separate new stable letter for each partial automorphism).

    Note that in particular, we have for each $G$ the HNN-extensions $G*_{\id G}=G\times\bZ$ and $G*_{\emptyset}=G*\bZ$.

    Both (amalgamated) free products and HNN extensions have well-known descriptions in terms of group presentations, and in each case, the factor groups naturally embed into the product.

    We finish this section by noting an important nontrivial property of HNN-extensions, which will be useful in some examples.
    \begin{fact}[Torsion theorem for HNN extensions]
        \label{fct:torsion_hnn}
        Given any group $G$ and set $P$ of partial automorphisms of $G$, every element of $G*_P$ of finite order is conjugate to an element of $G$.
    \end{fact}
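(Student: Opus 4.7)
The plan is to prove this via Britton's lemma combined with a cyclic-reduction argument -- the classical approach; see for instance \cite[Ch.~IV, Thm~2.4]{LS01}. I would fix stable letters $t_i$ for the partial automorphisms $\alpha_i \in P$ and work with the resulting normal form $w = g_0 t_{i_1}^{\epsilon_1} g_1 \cdots t_{i_k}^{\epsilon_k} g_k$ of an arbitrary element of $G *_P$. The key tool is Britton's lemma: if $k \geq 1$ and no \emph{pinch} appears in the expression -- that is, no subword $t_i^{-1} g t_i$ with $g \in \dom \alpha_i$ nor $t_i g t_i^{-1}$ with $g \in \rng \alpha_i$ -- then $w \neq 1$ in $G *_P$.

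Given $w \in G *_P$ of finite order, I would first reduce all pinches and then, if any stable letters remain, conjugate $w$ to a \emph{cyclically reduced} form $w'$: one whose normal-form word remains pinch-free under every cyclic rotation. If $w'$ has no stable-letter syllables, then $w' \in G$ and we are done. Otherwise, I would argue that the concatenation of $n$ copies of $w'$ is already in reduced normal form (no pinch can form inside a single copy by reducedness, and none at a junction by cyclic reducedness), so that the total number of stable-letter syllables in the normal form of $(w')^n$ equals $nk$; Britton's lemma would then force $(w')^n \neq 1$ for every $n \geq 1$, contradicting the finite order of $w$ (equivalently of its conjugate $w'$).

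The main technical obstacle is the bookkeeping in the cyclic-reduction step -- verifying that the process terminates in a genuinely cyclically reduced form and that powers of such a form cannot spawn new pinches at the junctions. A cleaner alternative would be via Bass--Serre theory: realise $G *_P$ as the fundamental group of the graph of groups with a single vertex (vertex group $G$) and one loop for each $\alpha \in P$ (edge group $\dom \alpha$), note that the associated Bass--Serre tree has vertex stabilisers exactly the conjugates of $G$, and invoke the standard fact that every torsion element of a group acting without inversions on a tree is elliptic (fixes a vertex). Either route handles an arbitrary set $P$ without modification.
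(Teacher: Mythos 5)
Your proposal is correct and is essentially the same argument as the paper's: the paper simply cites \cite[Theorem 2.4 of Chapter IV]{LS01} for a single partial automorphism (whose proof is exactly your Britton's-lemma-plus-cyclic-reduction argument) and handles general $P$ by induction on the number of stable letters involved, whereas you run the multi-stable-letter version of Britton's lemma directly, which works equally well. Your Bass--Serre alternative (torsion elements are elliptic, and vertex stabilisers are the conjugates of $G$) is also valid and fits the graph-of-groups framework the paper already uses elsewhere.
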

    \begin{proof}
        See \cite[Theorem 2.4 of Chapter IV]{LS01} for the case of $P=\{p\}$. The general case follows by induction.
    \end{proof}

    \subsection{Amalgamation and partial automorphisms of finite groups}
    In the paper \cite{Hal59}, Hall defined the universal countable locally finite group $\Gamma_F$. In the same paper, he proved the following:
    \begin{fact}
        \label{fct:finite_hnn}
        If $A$ is a finite group and $p$ is a finite partial automorphism of $A$, then there is a finite group $B\geq A$ and an element $b\in B$ such that $a^b=p(a)$ for $a\in \dom p$.
    \end{fact}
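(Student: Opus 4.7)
The plan is to realise $p$ as conjugation by a single element inside the symmetric group on the underlying set of $A$. Write $D=\dom p$ and $D'=\rng p$, which we may assume are (finite) subgroups of $A$, since any finite partial automorphism of a group extends canonically to an isomorphism between the subgroups generated by its domain and range. Because $p\colon D\to D'$ is a group isomorphism, we have $[A:D]=[A:D']$.

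Let $B=\Sym(A)$ and embed $A\hookrightarrow B$ via the right regular representation $\rho_a(x)=xa$. The goal is to find a single $\sigma\in B$ with $\rho_a^\sigma=\rho_{p(a)}$ for every $a\in D$. Unwinding, this relation is equivalent to $\sigma(xp(a))=\sigma(x)a$ for all $x\in A$ and $a\in D$, so $\sigma$ must send each right coset $yD'$ bijectively onto some right coset of $D$, compatibly with $p^{-1}$. The construction is then immediate: pick any bijection $A/D'\to A/D$ (possible since $[A:D']=[A:D]$), fix transversals $\{y_i\}$ and $\{x_i\}$ of $A/D'$ and $A/D$ paired up by this bijection, and define $\sigma(y_ip(a)):=x_ia$ for $a\in D$. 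Since each element of $A$ admits a unique expression $y_ip(a)$ (respectively, $x_ia$), this is a well-defined bijection of $A$, hence a permutation.

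The verification that this $\sigma$ witnesses the desired conjugation relation will be a one-line computation using the multiplicativity of $p$: if $y=y_ip(a_0)$, then $\sigma(yp(a))=\sigma(y_ip(a_0a))=x_ia_0a=\sigma(y)a$. Taking $b=\sigma\in B=\Sym(A)$ (which is finite since $A$ is) then completes the proof, and the resulting $b$ satisfies $a^b=p(a)$ for all $a\in\dom p$ under the Cayley identification of $A$ with $\rho(A)\leq B$. I do not anticipate any real obstacle; the only delicate point is the well-definedness of $\sigma$, which rests precisely on $\{y_i\}$ being a transversal of $A/D'$ and $p\colon D\to D'$ being a bijection of subgroups.
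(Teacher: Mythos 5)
Your argument is correct: the paper does not reprove this fact but simply cites Hall's Lemma~1 of \cite{Hal59}, and your regular-representation/coset-transversal construction is essentially Hall's original proof of that lemma. The only point to tidy in a final write-up is the composition convention: under standard function composition $\rho_a(x)=xa$ is an anti-homomorphism, so one should either embed via $a\mapsto\rho_{a^{-1}}$ or let permutations act on the right (consistent with the paper's $g^h$ notation); this does not affect the substance of the verification $\sigma(y_ip(a_0)p(a))=x_ia_0a=\sigma(y_ip(a_0))a$.
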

    \begin{proof}
        This is \cite[Lemma 1]{Hal59}.
    \end{proof}

    Hall's construction, via Fraïssé theory, easily implies that the class of finite groups has the amalgamation property. However, an explicit construction of the amalgam, using permutation groups, was given by B.H.\ Neumann.
    \begin{fact}
        \label{fct:finite_amalg}
        If $A\leq B,C$ are finite groups, then there is a finite group $D$ in which $B,C$ embed in such a way that the intersection of the two embeddings is $A$.
    \end{fact}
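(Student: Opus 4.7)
The plan is to follow B.H.\ Neumann's classical \emph{permutational product} construction, realising $D$ as a finite subgroup of a symmetric group. Fix right transversals $T\subseteq B$ and $S\subseteq C$ for $A$, each containing $1$, so that each element of $B$ factors uniquely as $ta$ with $t\in T$, $a\in A$, and similarly for $C$. The idea is to construct a finite set $X$ out of $T$, $S$, and $A$ (a natural choice being $T\times S\times A$, with the third coordinate serving as a tracking slot for an $A$-valued cocycle) together with compatible left actions of $B$ and $C$ on $X$ built from the coset decomposition: for $b\in B$, the action of $b$ first moves $t$ using the $B$-decomposition of $bt$, passes the resulting $A$-part through $s$ via the $C$-decomposition, and records the remaining $A$-valued information in the third coordinate; the $C$-action is defined symmetrically, with the roles of $T$ and $S$ swapped.

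The main verifications are as follows. Firstly, these formulas define faithful group actions of $B$ and $C$ on $X$; associativity follows from the cocycle identity for coset decompositions, and faithfulness is checked by examining the basepoint $(1,1,1)$. Secondly, the restrictions of the two actions to the common subgroup $A$ coincide as permutations of $X$, so that the two embeddings $B,C\hookrightarrow\Sym(X)$ assemble into a commutative diagram with the inclusions of $A$. Thirdly, the intersection of the images of $B$ and $C$ in $\Sym(X)$ equals $A$: if $b\in B$ and $c\in C$ induce the same permutation, comparing their effect on $(1,1,1)$ forces both $b$ and $c$ to lie in $A$, and the faithful restriction to $A$ then yields $b=c$. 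Setting $D=\langle B,C\rangle\leq\Sym(X)$ produces the required finite amalgam.

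The main technical obstacle is calibrating the construction so that the two $A$-actions on $X$ literally coincide and so that the intersection of the images is not strictly larger than $A$ (both of which can fail for naïve choices of $X$ and action formulas; the tracking coordinate is essential for ensuring that the $A$-valued corrections produced by decomposing through $B$ first and through $C$ first match up). A cleaner but less elementary alternative would be to pass through the (infinite) amalgamated free product $B*_A C$, in which $B$ and $C$ already embed with intersection exactly $A$ by the normal form theorem, and then invoke the classical residual finiteness of amalgamated free products of finite groups to obtain a finite quotient with the required properties.
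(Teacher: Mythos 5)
Your primary route is exactly the one the paper takes: Fact~\ref{fct:finite_amalg} is proved there simply by citing B.H.~Neumann's permutational product construction, which is what you are sketching. One calibration in your sketch is off, and it is precisely the point you flag as delicate: as described, the action of $b\in B$ on $(t,s,a)$ also disturbs the $s$-coordinate (by ``passing the $A$-part through $s$''), and moreover decomposing $bt$ on the left means that for $a_0\in A$ the $B$-side action moves $t$ to the representative of $a_0tA$ while the $C$-side action moves $s$ --- so the two induced actions of $A$ would \emph{not} coincide. The correct formulas are simpler: with $B=TA$ and $C=SA$, let $b$ send $(t,s,a)$ to $(t',s,a')$ where $tab=t'a'$, and let $c$ send $(t,s,a)$ to $(t,s',a')$ where $sac=s'a'$. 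Each factor then touches only its own transversal coordinate and the shared $A$-coordinate, both actions restrict on $A$ to $a_0\mapsto\bigl((t,s,a)\mapsto(t,s,aa_0)\bigr)$, and evaluating at $(1,1,1)$ gives faithfulness and that the images of $B$ and $C$ in $\Sym(T\times S\times A)$ meet exactly in the common copy of $A$. Your fallback via $B*_AC$ is also valid: the normal form theorem shows the finitely many elements of $(B\setminus\{1\})\cup(C\setminus\{1\})\cup(B\setminus A)(C\setminus A)^{-1}$ are nontrivial, and residual finiteness of an amalgam of two finite groups yields a finite quotient injective on $B$ and $C$ with the right intersection --- but that residual finiteness is a genuinely less elementary input than the permutational product.
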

    \begin{proof}
        The construction (and the proof of this property) is given in \cite[Section 2]{Neu60}.
    \end{proof}

    \subsection{Model theory}
    Purely model-theoretic terminology will only be used in Section~\ref{sec:mt} and in discussing the model-theoretic properties of specific examples. We will not define most of it in this paper, including type spaces, saturated models, the properties of smallness, stability, pseudofiniteness, quantifier elimination, decidability, independence property, $n$-independence property, or the tree property of the second kind. See e.g.\ \cite{Hod} for general background on model theory, \cite{Che14} for the tree property of the second kind, and \cite{Sh14} for $n$-(in)dependence property.

    \section{Definitions and preliminary observations}
    \subsection{Inner EPPA and ultrahomogeneity}
    In this section, we will define the main notions of this paper.
    \begin{definition}
        \label{dfn:inner_eppa}
        We say that a class $\cK$ of groups has \emph{inner EPPA} if for every $A\in\cK$ and every finite partial automorphism $p$ of $A$, there is a $B\geq A$ in $\cK$ and an element $b\in B$, a \emph{witness of inner EPPA} for $p$, such that for each $a\in\dom p$ we have $a^b=p(a)$.
    \end{definition}

    \begin{definition}
        \label{dfn:inner_uh}
        We say that a group $\Gamma$ is \emph{inner ultrahomogeneous} if for every finite partial automorphism $p$ of $\Gamma$ there is some $g\in \Gamma$, a \emph{witness of inner ultrahomogeneity for $p$} such that for $a\in\dom p$ we have $a^g=p(a)$.
    \end{definition}

    In this paper, unless stated otherwise, the capital letter $\Gamma$ will be used to refer to an inner ultrahomogeneous group (but most of the time, we will repeat this hypothesis when necessary).

    \begin{remark}
        \label{rem:conj_order}
        If $\Gamma$ is inner ultrahomogeneous, then $a,b\in \Gamma$ are conjugate if and only if they have the same order (because then $a\mapsto b$ is clearly a partial automorphism).
    \end{remark}

    \begin{proposition}
        \label{prop:char_inner_uh}
        If $\Gamma$ is any group, then $\Gamma$ is inner ultrahomogeneous if and only if it is ultrahomogeneous and $\Age(\Gamma)$ has inner EPPA. In particular, the Fraïssé limit of a Fraïssé class of groups with inner EPPA is inner ultrahomogeneous.
    \end{proposition}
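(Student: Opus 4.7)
The plan is to prove both implications of the biconditional and then deduce the ``In particular'' clause.

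For the forward direction, assume $\Gamma$ is inner ultrahomogeneous. Ultrahomogeneity is immediate since witnesses of inner ultrahomogeneity furnish, via conjugation, inner automorphisms of $\Gamma$ extending the given partial automorphism. For inner EPPA of $\Age(\Gamma)$, I would take $A \in \Age(\Gamma)$ (a finitely generated subgroup) and a finite partial automorphism $p$ of $A$; since $A \leq \Gamma$, $p$ is also a finite partial automorphism of $\Gamma$. Inner ultrahomogeneity gives some $g \in \Gamma$ conjugating $\dom p$ onto $\rng p$ correctly, and then $B := \langle A, g\rangle \in \Age(\Gamma)$ with $g \in B$ is the required witness.

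The backward direction is the substantive part. Given a finite partial automorphism $p$ of $\Gamma$, I would set $A := \langle \dom p \cup \rng p\rangle \in \Age(\Gamma)$, so $p$ becomes a finite partial automorphism of the finitely generated group $A$. Inner EPPA supplies some $B \in \Age(\Gamma)$ with $A \leq B$ and $b \in B$ witnessing inner EPPA for $p$. The subtle point is that $B$ is abstractly in $\Age(\Gamma)$, so there is \emph{some} embedding $\iota\colon B \to \Gamma$, but $\iota\restr A$ need not coincide with the given inclusion $A \hookrightarrow \Gamma$. To repair this, I would observe that the finite partial map sending $\iota(a) \mapsto a$ (for $a \in A$) is a finite partial automorphism of $\Gamma$, so ultrahomogeneity yields an automorphism $\sigma \in \Aut(\Gamma)$ extending it. Then $\sigma \circ \iota\colon B \to \Gamma$ is an embedding that is the identity on $A$, and I would take $g := \sigma(\iota(b))$. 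A short computation then shows $a^g = p(a)$ for $a \in \dom p$, using that $\iota$ respects conjugation and $p(a) \in A$ is fixed by $\sigma \circ \iota$.

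The ``In particular'' clause follows at once: if $\cK$ is a Fraïssé class of groups with inner EPPA, its Fraïssé limit $\Gamma$ is by construction ultrahomogeneous with $\Age(\Gamma) = \cK$, so the equivalence just proved yields inner ultrahomogeneity. The only mild obstacle is the bookkeeping in the backward direction, namely keeping track of which embedding of $A$ into $\Gamma$ is being used; the rest is routine.
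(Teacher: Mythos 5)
Your proof is correct and follows essentially the same route as the paper's: reduce to the finitely generated group $A$ generated by $\dom p \cup \rng p$, apply inner EPPA to get $B \geq A$ with a witness $b$, and use ultrahomogeneity of $\Gamma$ to realise $B$ inside $\Gamma$ over the given copy of $A$. The paper compresses your careful repair step (composing the abstract embedding $\iota$ with an automorphism fixing $A$) into the phrase ``we may assume that $A\leq B\leq\Gamma$'', so your write-up is just a more explicit version of the same argument.
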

    \begin{proof}
        The fact that inner ultrahomogeneity of $\Gamma$ implies ultrahomogeneity and inner EPPA for $\Age(\Gamma)$ is trivial. Suppose now that $\Gamma$ is ultrahomogeneous and $\Age(\Gamma)$ has inner EPPA. Fix a finite partial automorphism of $\Gamma$. Let $A\leq \Gamma$ be generated by the domain and range of $p$, so that $p$ is a partial automorphism of $A$. Let $B\geq A$ in $\Age(\Gamma)$ contain a witness $b$ of inner EPPA for $p$. Then by inner ultrahomogeneity, we may assume that $A\leq B\leq \Gamma$, and then the same $b$ works as a witness of inner ultrahomogeneity.
    \end{proof}

    \begin{corollary}
        If $\Gamma$ is ultrahomogeneous and $\Gamma_0\leq \Gamma$ is inner ultrahomogeneous and $\Age(\Gamma)=\Age(\Gamma_0)$, then $\Gamma$ is inner ultrahomogeneous.
    \end{corollary}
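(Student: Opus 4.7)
The plan is to reduce the statement directly to Proposition~\ref{prop:char_inner_uh}. That proposition tells us that a group is inner ultrahomogeneous if and only if it is ultrahomogeneous and its age has inner EPPA. So I only need to verify these two ingredients for $\Gamma$.

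Ultrahomogeneity of $\Gamma$ is given by hypothesis, so the only work is to check that $\Age(\Gamma)$ has inner EPPA. By assumption $\Age(\Gamma)=\Age(\Gamma_0)$, so it suffices to show $\Age(\Gamma_0)$ has inner EPPA. But this is immediate from the ``trivial'' direction of Proposition~\ref{prop:char_inner_uh}: since $\Gamma_0$ is inner ultrahomogeneous, given any $A\in\Age(\Gamma_0)$ and any finite partial automorphism $p$ of $A$, we may embed $A$ into $\Gamma_0$ and apply inner ultrahomogeneity of $\Gamma_0$ to find a witness $b\in\Gamma_0$ with $a^b=p(a)$ for $a\in\dom p$. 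Then the subgroup $B=\langle A,b\rangle\leq \Gamma_0$ lies in $\Age(\Gamma_0)$, extends $A$, and contains the desired inner EPPA witness $b$ for $p$.

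Combining the two, we invoke Proposition~\ref{prop:char_inner_uh} in the other direction to conclude that $\Gamma$ is inner ultrahomogeneous.

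I do not expect any real obstacle here: the statement is essentially a reorganization of Proposition~\ref{prop:char_inner_uh} together with the observation that inner EPPA of a class depends only on the isomorphism types of finitely generated members, and is therefore preserved under the equality $\Age(\Gamma)=\Age(\Gamma_0)$. The whole argument should fit in a few lines.
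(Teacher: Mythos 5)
Your argument is correct and is exactly the paper's intended proof: the paper simply says ``Immediate by the preceding proposition,'' and your write-up spells out precisely that deduction (inner EPPA for $\Age(\Gamma_0)=\Age(\Gamma)$ from the trivial direction of Proposition~\ref{prop:char_inner_uh}, then the converse direction applied to $\Gamma$). No gaps.
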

    \begin{proof}
        Immediate by the preceding proposition.
    \end{proof}

    HNN-extensions show that the class of all finitely generated groups has inner EPPA, and it is not hard to see that so do the classes of finitely presentable and finitely generated recursively presentable groups. Fact~\ref{fct:finite_hnn} shows that so does the class of finite groups. This leads to the groups $\Gamma_R$, $\Gamma_F$ mentioned in the introduction. There are also three finite inner ultrahomogeneous groups, namely the first three symmetric groups. See Section~\ref{sec:examples} for more details and more examples.

    \subsection{Implications between properties}
    The following proposition establishes some relations between some properties of hereditary classes of groups. This will be useful mainly to simplify the description of examples in Section~\ref{sec:examples}.

    \begin{proposition}
        \label{prop:implications_between_props}
        Let $\cK$ be a hereditary class of groups with the joint embedding property (e.g.\ the age of a fixed group). Then:
        \begin{enumerate}
            \item
            if $\cK$ is closed under $*\bZ$ if and only if it contains a nontrivial group and is closed under (binary) free products,
            \item
            if $\cK$ is closed under $*\bZ$ and has inner EPPA, then it is closed under $\times\bZ$;
            \item
            $\cK$ is closed under finitary HNN-extensions, then it has inner EPPA and is closed under finitary amalgamated free products (so it has the amalgamation property).
        \end{enumerate}
    \end{proposition}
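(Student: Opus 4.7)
The plan is to handle the three items in turn, each reducing essentially to either a normal-form calculation in a free product / HNN extension, or an application of inner EPPA to a carefully chosen partial automorphism.

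For item~(1), I would prove $(\Leftarrow)$ by extracting a copy of $\bZ$ from $\cK$: any nontrivial $G\in\cK$ gives $G*G\in\cK$, which contains $gh$ of infinite order (for $g,h$ nontrivial in the two factors), so by heredity $\bZ\cong\langle gh\rangle\in\cK$, whence $A*\bZ\in\cK$ for every $A\in\cK$. Conversely, assuming $\cK$ nonempty, any $A\in\cK$ yields $A*\bZ\in\cK$ and hence $\bZ\in\cK$ by heredity, so $\cK$ has a nontrivial member; for closure under binary free products, use JEP to embed $A,B\in\cK$ in a common $C\in\cK$, pass to $C*\bZ\in\cK$ with stable letter $t$, and show $\langle A,\,t^{-1}Bt\rangle\cong A*B$ by checking that an alternating word in the two factors $A$ and $t^{-1}Bt$ expands to a reduced word in the free product $C*\bZ$.

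For item~(2), the crucial idea is to apply inner EPPA to a partial automorphism that forces the conjugation witness to have infinite order. Given $A\in\cK$, embed $A$ in $A*\bZ\in\cK$ with generator $t$, and consider the map $A*\bZ\to\langle A,t^2\rangle$ defined by $a\mapsto a$ on $A$ and $t\mapsto t^2$; a normal-form argument in $A*\bZ$ shows it is injective, hence an isomorphism of two finitely generated subgroups of $A*\bZ$, so a finite partial automorphism. Inner EPPA yields $B\geq A*\bZ$ in $\cK$ and $b\in B$ with $a^b=a$ for $a\in A$ and $t^b=t^2$. Iterating gives $t^{b^n}=t^{2^n}$, so $b^n=1$ would force $t^{2^n-1}=1$ and hence $n=0$; and $b^n\in A$ with $n\neq 0$ would give $a^{-1}ta=t^{2^n}$ in $A*\bZ$, contradicting uniqueness of normal form (lengths $3$ versus $1$). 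Thus $b$ has infinite order, $\langle b\rangle\cap A=\{1\}$, and $b$ commutes with $A$, so $\langle A,b\rangle\cong A\times\bZ$ sits inside $B\in\cK$, and heredity gives $A\times\bZ\in\cK$.

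For item~(3), closure under finitary HNN extensions gives inner EPPA immediately: the stable letter of $A*_p\in\cK$ witnesses the partial automorphism $p$. For finitary amalgamated free products, first note that the HNN extension over the empty partial automorphism is $B*_\emptyset=B*\bZ$, so $\cK$ is closed under $*\bZ$, and by~(1) also under binary free products. Given finitely generated $A\leq B,C$ in $\cK$, form $B*C\in\cK$, then the HNN extension $(B*C)*_\phi\in\cK$ where $\phi$ identifies the two copies of $A$; inside this extension the classical embedding of amalgamated free products into HNN extensions realises $\langle B,\,tCt^{-1}\rangle\cong B*_AC$, and heredity concludes. The main technical obstacle is the trick in~(2): the naive choice of realising $\id_A$ by inner EPPA may yield a finite-order witness, and one needs the specific partial automorphism sending $t$ to $t^2$ to simultaneously guarantee that $b$ commutes with $A$, that $b$ has infinite order, and that $\langle b\rangle\cap A=\{1\}$. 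Everything else reduces to routine manipulation of free products, HNN extensions, and Britton-style normal form arguments.
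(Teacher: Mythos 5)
Your proof is correct and follows essentially the same route as the paper's: extracting $\bZ$ from $G*G$ and embedding $A*B$ into $C*\bZ$ for (1), applying inner EPPA to the partial automorphism of $A*\bZ$ that fixes $A$ and squares the stable letter for (2), and realising $B*_A C$ inside a finitary HNN extension of $B*C$ for (3). The only cosmetic difference is that the paper obtains the last embedding via Bass--Serre theory (the fundamental group of a graph of groups with two edges) rather than invoking the classical embedding of an amalgam into an HNN extension directly, and your normal-form verifications in (2) simply spell out what the paper leaves as ``easy to see''.
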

    \begin{proof}
        For (1), in one direction, note that if $A$ is a nontrivial group, then $A*A$ is not torsion, so it contains a copy of $\bZ$. In the other direction, suppose $A,B\in \cK$ and $\cK$ is closed under $*\bZ$. Let $C$ be a supergroup of $A$ and $B$ in $\cK$ (it exists, because $\cK$ has the JEP). Then $A*B$ embeds into $C*C$, which embeds into $C*\bZ$ as $\langle C,C^t\rangle$, where $t$ is the generator of $\bZ$.

        For (2), if $\cK$ has inner EPPA and $A*\bZ\in\cK$, then this group has a partial automorphism which fixes $A$ and squares the generator of $\bZ$. Given $B\geq A*\bZ$ and element $g\in B$ witnessing inner ultrahomogeneity for this partial automorphism, it is easy to see $\langle A,g\rangle\cong A\times\bZ$.

        Now, for (3), suppose $\cK$ is closed under finitary HNN-extensions. Then clearly $\cK$ has inner EPPA (immediately by definition) and is closed under $*\bZ$ (by taking a trivial partial automorphism), so by (1), it is closed under free products. To see that it is closed under finitary amalgamated free products, fix $B,C\in\cK$ and their isomorphic f.g.\ subgroups $A_B,A_C$. Let $G$ be a graph of groups with vertices $B$ and $C$ and two edges, one of which has trivial edge group connected to $B$ and $C$, and the other embedding into $B$ and $C$ as $A_B$ and $A_C$, respectively. Then by elementary Bass-Serre theory we have $(B*_{A_B=A_C}C)*\bZ\cong \pi_1(G)\cong (B*C)*_{A_C^t=A_B}$. Since the right hand side is in $\cK$, the conclusion follows.
    \end{proof}

    \begin{corollary}
        \label{cor:hnn_uh}
        If $\Gamma$ is an ultrahomogeneous group and $\Age(\Gamma)$ is closed under finitary HNN-extensions, then it is inner ultrahomogeneous.
    \end{corollary}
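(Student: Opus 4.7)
The plan is to chain the two structural results that immediately precede the corollary. By Proposition~\ref{prop:char_inner_uh}, it suffices to verify that $\Age(\Gamma)$ has inner EPPA, since ultrahomogeneity of $\Gamma$ is already part of the hypothesis. So the only content of the proof is to extract inner EPPA from closure under finitary HNN-extensions.

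But this is exactly part (3) of Proposition~\ref{prop:implications_between_props}: a hereditary class of groups (with JEP) that is closed under finitary HNN-extensions has inner EPPA. The JEP is automatic for $\Age(\Gamma)$ (it is the age of a structure, and any two elements jointly embed into a common finitely generated subgroup of $\Gamma$). So I would simply invoke Proposition~\ref{prop:implications_between_props}(3) to get inner EPPA for $\Age(\Gamma)$, and then apply Proposition~\ref{prop:char_inner_uh} to conclude that $\Gamma$ is inner ultrahomogeneous.

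There is essentially no obstacle here — the corollary is a one-line consequence. If one wanted to make the argument self-contained rather than citing Proposition~\ref{prop:implications_between_props}(3), the key step would be: given a finite partial automorphism $p$ of some $A\in\Age(\Gamma)$, form the finitary HNN-extension $A*_p$, which by hypothesis lies in $\Age(\Gamma)$; the stable letter then serves as a witness of inner EPPA for $p$ inside $A*_p$. This is the only real computation, and it is essentially built into the definition of an HNN-extension.
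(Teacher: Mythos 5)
Your proof is correct and matches the paper's own argument exactly: the paper also derives the corollary immediately by combining Proposition~\ref{prop:implications_between_props} (to get inner EPPA from closure under finitary HNN-extensions) with Proposition~\ref{prop:char_inner_uh}. Your additional self-contained sketch via the stable letter of $A*_p$ is also the correct underlying mechanism.
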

    \begin{proof}
        Immediate by Proposition~\ref{prop:implications_between_props} and Proposition~\ref{prop:char_inner_uh}.
    \end{proof}

    \begin{corollary}
        \label{cor:hnn_Fraïssé}
        If $\cK$ is an essentially countable hereditary class of groups which is closed under finitary HNN-extensions and has JEP (e.g.\ it is closed under direct products or under free products), then $\cK$ is a Fraïssé class with inner EPPA.
    \end{corollary}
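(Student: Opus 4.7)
The plan is to observe that this is essentially a direct consequence of Proposition~\ref{prop:implications_between_props}(3) combined with the recalled characterisation of Fraïssé classes. Recall that to show $\cK$ is a Fraïssé class, we need four things: that it is a hereditary class (built into the definition used here), that it is essentially countable (given), that it has JEP (given), and that it has the amalgamation property. The only missing piece is AP, and inner EPPA is the additional conclusion we want.

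Both of these follow directly from Proposition~\ref{prop:implications_between_props}(3): closure under finitary HNN-extensions immediately gives inner EPPA (applying an HNN-extension to a partial automorphism produces a supergroup in $\cK$ in which the stable letter conjugates as prescribed), and the same proposition shows that $\cK$ is closed under finitary amalgamated free products, which in particular yields AP, since any amalgamation diagram $B \hookleftarrow A \hookrightarrow C$ with $A, B, C \in \cK$ (hence $A$ finitely generated, as we work with finitely generated structures) can be amalgamated to $B *_A C \in \cK$.

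For the parenthetical remark, closure under direct products or under free products gives JEP in the standard way: $A, B$ both embed into $A \times B$, respectively into $A * B$. This justifies the examples mentioned.

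The only ``obstacle'' worth noting is being careful that $\cK$ consists of finitely generated groups, so that the amalgamation base in any AP diagram drawn from $\cK$ is automatically finitely generated and the ``finitary'' qualifier in the closure hypothesis is enough. Given that, the proof is a two-line citation of Proposition~\ref{prop:implications_between_props}(3) together with the standard Fraïssé theorem.
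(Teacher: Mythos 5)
Your proposal is correct and follows exactly the paper's route: the paper's own proof is simply ``Immediate by Proposition~\ref{prop:implications_between_props}'', and your argument spells out precisely why --- part (3) gives inner EPPA and closure under finitary amalgamated free products (hence AP, since the amalgamation base is finitely generated), which together with the given JEP and essential countability yields a Fraïssé class. Nothing is missing.
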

    \begin{proof}
        Immediate by Proposition~\ref{prop:implications_between_props}.
    \end{proof}

    \subsection{Abelian groups of automorphisms}
    In this section, we will establish some lemmas which will allow us to find abelian subgroups of inner ultrahomogeneous groups, which will be very useful to obtain information about their algebraic structure.

    For a group $A$, we will write $\Emb(A)$ for the monoid of self-embeddings of $A$ (i.e.\ injective endomorphisms).

    \begin{lemma}
        \label{lem:commuting_conjugation_group}
        Suppose $A,B\leq \Gamma$ and $B\leq N(A)$. Suppose furthermore that $D=BA$ ($=\langle A,B\rangle$, since $B$ normalises $A$) is finitely generated and $\sigma\in \Emb(A)$ commutes with conjugation by elements of $B$ and fixes all elements of $A\cap B$.

        Then there is a $g\in C(B)$ such that for all $a\in A$ we have $a^g=\sigma(a)$.
    \end{lemma}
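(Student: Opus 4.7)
The plan is to build an explicit finite partial automorphism of $\Gamma$ whose only possible witnesses of inner ultrahomogeneity are elements of the desired form, and then invoke inner ultrahomogeneity of $\Gamma$.

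Concretely, since $D = BA$ is finitely generated and every element of $D$ has the form $ba$ with $b\in B$, $a\in A$, I would define a map $p\colon D\to \Gamma$ by $p(ba) := b\sigma(a)$. The first task is to check that $p$ is a well-defined partial automorphism of $\Gamma$ in the sense of the preliminaries, namely an isomorphism between two finitely generated subgroups of $\Gamma$. Well-definedness amounts to the following: if $b_1a_1 = b_2a_2$, then $b_2^{-1}b_1 = a_2 a_1^{-1}\in A\cap B$, and since $\sigma$ fixes $A\cap B$ pointwise one gets $\sigma(a_2 a_1^{-1}) = a_2a_1^{-1} = b_2^{-1}b_1$, which rearranges to $b_1\sigma(a_1) = b_2\sigma(a_2)$.

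For homomorphism, using that $B$ normalises $A$, write $(b_1a_1)(b_2a_2) = (b_1b_2)(a_1^{b_2}a_2)$; then
\[
p\bigl((b_1a_1)(b_2a_2)\bigr) = b_1b_2\,\sigma(a_1^{b_2})\,\sigma(a_2),
\]
while $p(b_1a_1)p(b_2a_2) = b_1\sigma(a_1)b_2\sigma(a_2) = b_1b_2\,\sigma(a_1)^{b_2}\,\sigma(a_2)$. These agree precisely because $\sigma$ commutes with conjugation by elements of $B$, so $\sigma(a_1^{b_2})=\sigma(a_1)^{b_2}$. For injectivity, if $p(ba)=e$ then $b=\sigma(a)^{-1}\in A$, so $\sigma(a)\in A\cap B$; the $\sigma$-fixing hypothesis together with $\sigma$ being injective (as $\sigma\in\Emb(A)$) then forces $a\in A\cap B$ and $\sigma(a)=a$, hence $b=a^{-1}$ and $ba=e$.

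Having verified this, $p$ is an isomorphism between the finitely generated subgroup $D$ of $\Gamma$ and its image $p(D)$, i.e.\ a finite partial automorphism of $\Gamma$. By inner ultrahomogeneity, there is some $g\in\Gamma$ with $d^g = p(d)$ for every $d\in D$. Restricting to $B\subseteq D$, this gives $b^g = b$ for all $b\in B$, so $g\in C(B)$; restricting to $A\subseteq D$ gives $a^g = \sigma(a)$ for all $a\in A$, as required. The only even mildly subtle point is the injectivity check, since $\sigma$ is only assumed to be an embedding rather than an automorphism; everything else is a bookkeeping verification from the two commutation hypotheses on $\sigma$.
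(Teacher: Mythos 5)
Your proposal is correct and follows essentially the same route as the paper: both define the map $ba\mapsto b\sigma(a)$ on $D=BA$, verify well-definedness, the homomorphism property, and injectivity using exactly the same computations, and then apply inner ultrahomogeneity to this finite partial automorphism. The only difference is that you spell out the final step (restricting the witness $g$ to $B$ and to $A$) which the paper leaves implicit.
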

    \begin{proof}
        We will show that $\bar\sigma(ba)=b\sigma(a)$ yields a well-defined self-embedding of $D$. Then the conclusion will follow immediately by inner ultrahomogeneity.

        Suppose $b_1a_1=b_2a_2$. Then $a_2a_1^{-1}=b_2^{-1}b_1\in A\cap B$, so $\sigma(a_2)\sigma(a_1)^{-1}=\sigma(a_2a_1^{-1})=a_2a_1^{-1}=b_2^{-1}b_1$, whence $b_1\sigma(a_1)=b_2\sigma(a_2)$, so $\bar\sigma$ is well-defined.

        To see that $\bar\sigma$ is a homomorphism, note that $b_1a_1b_2a_2=b_1b_2a_1^{b_2}a_2$, so $b_1\sigma(a_1)b_2\sigma(a_2)=b_1b_2\sigma(a_1)^{b_2}\sigma(a_2)=b_1b_2\sigma(a_1^{b_2})\sigma(a_2)=b_1b_2\sigma(a_1^{b_2}a_2)$.

        Finally, to see that $\bar\sigma$ is injective, observe that if $b\sigma(a)=1$, then $\sigma(a)=b^{-1}\in A\cap B$, so $\sigma(a)=\sigma(\sigma(a))$, whence $\sigma(a)=a$ (by injectivity of $\sigma$), so $a=b^{-1}$ and $ba=1$.
    \end{proof}

    \begin{lemma}
        \label{lem:commuting_automorphisms_with_fixed_points}
        Suppose $A\leq\Gamma$ is finitely generated, $\Sigma\subseteq\Emb(A)$ consists of commuting elements and there is a $\sigma_0\in\Sigma$ such that every $\sigma\in \Sigma$ fixes all the fixed points of $\sigma_0$ (and possibly more).

        Then we can assign to each $\sigma\in\Sigma$ some $g_\sigma$ in $\Gamma$ in such a way that $\langle g_\sigma\mid \sigma\in \Sigma\rangle$ is abelian and for $a\in A$ and $\sigma\in\Sigma$ we have $a^{g_\sigma}=\sigma(a)$.
    \end{lemma}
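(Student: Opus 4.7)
The plan is to build the witnesses $g_\sigma$ one at a time, by induction on a well-ordering of $\Sigma$ in which $\sigma_0$ comes first, invoking Lemma~\ref{lem:commuting_conjugation_group} at each successor stage. For $\sigma_0$ itself, inner ultrahomogeneity applied to $\sigma_0$ as a finite partial automorphism of $\Gamma$ (determined by a finite generating set of $A$) produces the witness $g_{\sigma_0}$.

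For the inductive step at $\sigma\in\Sigma$, suppose pairwise commuting $(g_\tau)_{\tau<\sigma}$ have been assigned, each realising the corresponding $\tau$. Let $B:=\langle g_\tau : \tau<\sigma\rangle$ (abelian by the inductive hypothesis) and let $B^+$ be the submonoid of $B$ generated by the $g_\tau$. Since $g_\tau A g_\tau^{-1}=\tau(A)\subseteq A$ for each $\tau$, conjugation by every element of $B^+$ maps $A$ into $A$, so
\[
    \tilde A := \bigcup_{b\in B^+} b^{-1}Ab
\]
is a directed union of subgroups (ordered by $b_1\leq b_2$ if and only if $b_2 b_1^{-1}\in B^+$), hence a subgroup of $\Gamma$; handling positive and negative powers of each $g_\tau$ separately shows that $\tilde A$ is normalised by $B$. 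Extend $\sigma$ to $\tilde\sigma\in\Emb(\tilde A)$ by the formula $\tilde\sigma(b^{-1}ab) := b^{-1}\sigma(a)b$ for $b\in B^+$, $a\in A$; well-definedness follows because any two representations $b_1^{-1}a_1 b_1 = b_2^{-1}a_2 b_2$ can be lifted to the common bound $b_3 := b_1 b_2\in B^+$, and the commutativity of $\sigma$ with each $\tau$ (hence with conjugation by $B^+$) ensures the resulting values agree. By construction $\tilde\sigma$ commutes with $B$-conjugation on $\tilde A$.

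Finally, $\tilde A\cap B = A\cap B$: any $x=b^{-1}ab$ lying in $B$ satisfies $a=bxb^{-1}=x$ (since $B$ is abelian), so $x=a\in A$; and any such $a\in A\cap B$ commutes with $g_{\sigma_0}\in B$, hence lies in $\Fix(\sigma_0)$, and is therefore fixed by $\sigma$ (by hypothesis) and thus by $\tilde\sigma$. Since $B\tilde A = \langle A, g_\tau : \tau<\sigma\rangle$ is finitely generated whenever $\sigma$ has finitely many predecessors, Lemma~\ref{lem:commuting_conjugation_group} applies to $(\tilde A, B, \tilde\sigma)$ and yields $g_\sigma\in C(B)$ with $a^{g_\sigma}=\tilde\sigma(a)=\sigma(a)$ for all $a\in A$; as $g_\sigma$ commutes with every $g_\tau$ for $\tau<\sigma$, the inductive invariant is preserved. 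The main obstacle is the well-definedness of $\tilde\sigma$ on the directed union $\tilde A$, where one must carefully track the commutation of $\sigma$ with conjugation by $B^+$ (complicated by the fact that the $\tau$'s are only embeddings of $A$, not automorphisms, so that a single $b^{-1}Ab$ need not be closed under $B$-conjugation); a secondary concern, at limit stages of the induction in the case of uncountable $\Sigma$, is that $B$ may fail to be finitely generated, which requires additional care.
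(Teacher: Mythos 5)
Your proof is correct and follows essentially the same route as the paper's: an inductive construction that, at each step, extends $\sigma$ to the group generated by the $B$-conjugates of $A$ (your directed union $\tilde A$ coincides with the paper's $\langle A^{B}\rangle$) and then invokes Lemma~\ref{lem:commuting_conjugation_group}, with well-definedness secured exactly as you indicate, by lifting two representations to a common conjugate and using that composites of the already-handled $\tau$'s commute with $\sigma$. The one worry you leave open --- limit stages for uncountable $\Sigma$ --- is vacuous: since $A$ is finitely generated, $\Emb(A)$ and hence $\Sigma$ is countable, so the well-order may be taken of type at most $\omega$ and every stage involves only finitely many predecessors, which is exactly how the paper arranges it.
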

    \begin{proof}
        First, by inner ultrahomogeneity, we can find a $g_0\in \Gamma$ such that for $a\in A$ we have $a^{g_0}=\sigma_0(a)$. We can also assume without loss of generality that $\Sigma$ is a monoid (i.e.\ $\id_A\in\Sigma$ and $\Sigma$ is closed under composition).

        Now, extend $(\sigma_0)$ to $(\sigma_i)_{i\in\bN}$, a (possibly non-injective) enumeration of $\Sigma$ (this exists, because $\Sigma$ is countable, because $A$ is finitely generated).

        We will recursively build a sequence $(g_i)_{i\in\bN}$ of commuting elements of $\Gamma$ such that for $a\in A$ we have $a^{g_i}=\sigma_i(a)$, which will complete the proof.

        Suppose we have commuting $g_0, g_1,\ldots,g_k\in\Gamma$. Let $B_k\coloneqq \langle g_0,\ldots,g_k\rangle$ and $A_k\coloneqq \langle A^{B_k}\rangle$.
        We want to extend $\sigma_{k+1}$ to a self-embedding of $A_k$ which commutes with conjugation by elements of $B_k$ and fixes all elements of $A_k\cap B_k$. Since $\langle A_k,B_k\rangle=\langle A,g_0,\ldots,g_k\rangle$ is finitely generated, we can then apply Lemma~\ref{lem:commuting_conjugation_group} to obtain $g_{k+1}$ as desired.

        Using the fact that $B_k$ is a finitely generated abelian group, it is not hard to see that for any $b_1,b_2\in B_k$ there is $b\in B_k$ (``$-\min(b_1,b_2)$'') and $\tau_1,\tau_2\in \Sigma$ such that for all $a\in A$ we have $a^{b_i-b}=\tau_i(a)$ for $i=1,2$.

        It follows that for any $a_1,a_2\in A$ we have $a_1^{b_1}a_2^{b_2}=(\tau_1(a_1)\tau_2(a_2))^b\in A^{B_k}$, so $A_k=A^{B_k}$. Now, we want to show that $\bar\sigma_{k+1}(a^b)=\sigma_{k+1}(a)^b$ defines the desired embedding.

        First, to see that it is well-defined, suppose $a_1^{b_1}=a_2^{b_2}$ and let $b,\tau_1,\tau_2$ be as before, so $a_i^{b_i}=\tau_i(a_i)^b$ for $i=1,2$. By commutativity of $\Sigma$, we have for $i=1,2$ that $\sigma_{k+1}\circ \tau_i(a_i)=\tau_i(\sigma_{k+1}(a_i))=\sigma_{k+1}(a_i)^{b_i-b}$, so $\sigma_{k+1}(a_i)^{b_i}=\sigma_{k+1}(\tau_i(a_i))^b$. Since $\tau_1(a_1)=(a_1^{b_1})^{-b}=(a_2^{b_2})^{-b}=\tau_2(a_2)$, we have that $\sigma_{k+1}(a_1)^{b_1}=\sigma_{k+1}(a_2)^{b_2}$.

        Now, to see that $\bar\sigma_{k+1}$ is a homomorphism, choose any $a_1,a_2\in A$ and $b_1,b_2\in B_k$, along with $\tau_1,\tau_2$ as before. Then
        \begin{multline*}
            \bar\sigma_{k+1}(a_1^{b_1}a_2^{b_2})=\bar\sigma_{k+1}((\tau_1(a_1)\tau_2(a_2))^b)=\\
            =\sigma_{k+1}(\tau_1(a_1)\tau_2(a_2))^b=
            \sigma_{k+1}(\tau_1(a_1))^b\sigma_{k+1}(\tau_2(a_2))^b=\\
            =\bar\sigma_{k+1}(\tau_1(a_1)^b)\bar\sigma_{k+1}(\tau_2(a_2)^b)
            =\bar\sigma_{k+1}(a_1^{b_1})\bar\sigma_{k+1}(a_2^{b_2}).
        \end{multline*}
        The fact that the kernel of $\bar\sigma_{k+1}$ is trivial is immediate by definition of $\bar\sigma_{k+1}$ and the injectivity of $\sigma_{k+1}$. The fact that $\bar\sigma_{k+1}$ commutes with conjugation by elements of $B_k$ follows immediately from the fact that it is well-defined.

        Finally, if $a^b\in B_k\cap A^{B_k}$, then $a^b$ commutes with $B_k$, whence $a^b=a$ and $a$ commutes with $g_0$, so it is fixed by $\sigma_0$. By hypothesis, it is fixed by every element of $\Sigma$, which includes $\sigma_{k+1}$. Thus $\bar\sigma_{k+1}(a^b)=\sigma_{k+1}(a)=a$.
    \end{proof}
    \begin{corollary}
        \label{cor:abelian_automorphism_group_semidirect}
        Suppose $\Gamma$ is torsion inner ultrahomogeneous, $A\leq\Gamma$ is finite, while $B\leq\Aut(A)$ is abelian, of order coprime with the order of $A$. Then there is a finite abelian $C\leq N(A)$ such that $B$ is the image of $C$ via the conjugation map $N(A)\to \Aut(A)$ and the orders of $A$ and $C$ are coprime. (In particular, $\langle A,C\rangle$ is the (internal) semidirect product of $A$ and $C$ and we have the natural epimorphism $\langle A,C\rangle\to A\rtimes B$.)
    \end{corollary}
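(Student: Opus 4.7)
The plan is to apply Lemma~\ref{lem:commuting_automorphisms_with_fixed_points} to produce a commuting family of conjugators in $\Gamma$ that realize every element of $B$ by conjugation on $A$, and then pass to appropriate powers to secure the coprime-order condition on $C$.

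First, since $A$ is finite we have $\Emb(A) = \Aut(A)$, and I would exhibit an auxiliary $\sigma_0 \in \Aut(A)$ which commutes with every element of $B$, has order coprime to $|A|$, and satisfies $\Fix(\sigma_0) \subseteq C_A(B)$. When $A$ is abelian this follows from Maschke's theorem and Schur's lemma applied to the $B$-module structure on $A$: the coprime-order condition lets one decompose $A$, working prime-by-prime, into $B$-invariant pieces on whose non-trivial irreducible components $V_i$ the commutant $\End_B(V_i)$ is a non-trivial finite field; let $\sigma_0$ act there as multiplication by a non-identity element of $\End_B(V_i)$ (automatically fixed-point-free and of order coprime to the relevant prime), and as the identity on $C_A(B)$. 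An analogous but more delicate argument based on coprime action theory handles the general case.

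Setting $\Sigma := B \cup \{\sigma_0\}$, a commuting subset of $\Aut(A)$ for which $\sigma_0$ satisfies the fixed-point hypothesis of Lemma~\ref{lem:commuting_automorphisms_with_fixed_points}, that lemma produces commuting elements $\{g_\sigma : \sigma \in \Sigma\} \subseteq \Gamma$ with $a^{g_\sigma} = \sigma(a)$ for all $a \in A$. Because $\Gamma$ is torsion, each $g_\sigma$ has finite order $m_\sigma$; $\sigma \in B$ has order $d_\sigma$ dividing $|B|$, hence coprime to $|A|$. Write $m_\sigma = u_\sigma v_\sigma$ with $u_\sigma$ the largest divisor of $m_\sigma$ all of whose prime factors divide $|A|$, so that $v_\sigma$ is coprime to $|A|$. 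Since $\gcd(u_\sigma, d_\sigma) = 1$, the Chinese Remainder Theorem produces $k_\sigma$ with $k_\sigma \equiv 0 \pmod{u_\sigma}$ and $k_\sigma \equiv 1 \pmod{d_\sigma}$; then $g_\sigma^{k_\sigma}$ still induces $\sigma$ on $A$, has order dividing $v_\sigma$ (hence coprime to $|A|$), and these powers still commute.

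Setting $C := \langle g_\sigma^{k_\sigma} : \sigma \in B\rangle$ then completes the argument: $C$ is a finite abelian subgroup of $\Gamma$ generated by commuting elements of order coprime to $|A|$, so $|C|$ is coprime to $|A|$; its image under the conjugation map $N(A) \to \Aut(A)$ contains every $\sigma \in B$, so equals $B$; and coprimality of orders forces $A \cap C = \{e\}$, giving $\langle A, C\rangle = A \rtimes C$ as an internal semidirect product, with the projection $C \twoheadrightarrow B$ extending to the claimed epimorphism onto $A \rtimes B$. The main obstacle is the construction of $\sigma_0$ with the prescribed fixed-point behaviour and coprime order, which is transparent for abelian $A$ but calls for a more careful use of coprime action theory for general finite $A$.
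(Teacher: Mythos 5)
Your overall strategy is plausible, and the second half of your argument (passing from the conjugators $g_\sigma$ to suitable powers $g_\sigma^{k_\sigma}$ via the Chinese Remainder Theorem so that each still induces $\sigma$ on $A$ but has order coprime to $\lvert A\rvert$, and then reading off the semidirect product structure from $A\cap C=\{1\}$) is correct and essentially the same order-adjustment trick the paper uses, in the form $g^{(n\lvert A\rvert)^K}$. The problem is the first half. Your entire reduction to Lemma~\ref{lem:commuting_automorphisms_with_fixed_points} hinges on producing an auxiliary $\sigma_0\in\Aut(A)$ that commutes with every element of $B$ and satisfies $\Fix(\sigma_0)\subseteq C_A(B)$, and you only argue this for abelian $A$ (and even there only for the elementary abelian case, via Maschke and Schur; for non-elementary-abelian $p$-groups the decomposition into irreducibles with field endomorphism rings does not apply directly). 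For general finite, possibly non-abelian $A$ you explicitly defer to ``coprime action theory'' without an argument, and it is not at all clear that such a $\sigma_0$ exists in general: it must lie in $C_{\Aut(A)}(B)$, and there is no a priori reason that centraliser contains an element whose fixed points are confined to $C_A(B)$. Since the corollary is stated for arbitrary finite $A$, this is a genuine gap, and it sits exactly at the point you yourself flag as ``the main obstacle.''

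The paper avoids this obstacle entirely by taking a different route: it enumerates $B=\{\sigma_1,\dots,\sigma_n\}$ and builds the commuting conjugators one at a time using Lemma~\ref{lem:commuting_conjugation_group} rather than Lemma~\ref{lem:commuting_automorphisms_with_fixed_points}. At stage $k$ one has an abelian $C_k=\langle g_1,\dots,g_k\rangle$ whose order has been kept coprime to $\lvert A\rvert$ (by the same power-raising trick you use), so $C_k\cap A=\{1\}$; the hypothesis of Lemma~\ref{lem:commuting_conjugation_group} that $\sigma_{k+1}$ fix $A\cap C_k$ is then vacuous, and the fact that $\sigma_{k+1}$ commutes with conjugation by $C_k$ is immediate from $B$ being abelian. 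No fixed-point analysis on $A$ is needed at all. If you want to salvage your approach, the cleanest fix is to switch to that inductive argument; alternatively you would need to actually prove the existence of $\sigma_0$ for arbitrary finite $A$, which appears to be substantially harder than the corollary itself.
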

    \begin{proof}
        Enumerate $B$ as $\sigma_1,\ldots,\sigma_n$. We will recursively find commuting $g_1,\ldots,g_n\in N(A)$ such that for $a\in A$, $j=1,\ldots, n$ we have $a^{g_j}=\sigma_j(a)$ and for each $k$, the order of $C_k\coloneqq\langle g_1,\ldots, g_k\rangle$ is coprime with the order of $A$.

        Suppose $k<n$ and we already have $g_1,\ldots,g_k$ satisfying the above. It follows that $C_k\cap A=\{1\}$ (since every non-identity element of $C_k$ has order not dividing the order of $A$). Thus, the hypotheses of Lemma~\ref{lem:commuting_conjugation_group} hold (with $B=C_k$, $\sigma=\sigma_{k+1}$), so there is a $g\in C(C_k)$ such that $a^g=\sigma_{k+1}(a)$.

        Now, let $n$ be such that $n\lvert A\rvert\equiv 1$ modulo the order of $\sigma_{k+1}$ (it exists by hypothesis). Since $\Gamma$ is torsion, the order of $g$ is finite, and it follows that for a sufficiently large $K$, the order of $g_{k+1}\coloneqq g^{(n\lvert A\rvert)^K}$ is coprime with the order of $A$, and for $a\in A$ we have $a^{g^{(n\lvert A\rvert)^K}}=\sigma_{k+1}^{(n\lvert A\rvert)^K}(a)=\sigma_{k+1}^{1^K}(a)=\sigma_{k+1}(a)$.

        Finally, since $g_{k+1}\in C(C_k)$, the order of $C_{k+1}\coloneqq \langle C_k,g_{k+1}\rangle$ divides the product of the order of $C_k$ and the order of $g_{k+1}$, so it is coprime with the order of $A$, which completes the induction step.

        $C\coloneqq C_n$ clearly works. The parenthetical remark follows from the fact that $A\cap C_n=\{1\}$ and $C_n\leq N(A)$.
    \end{proof}

    The following fact will be especially useful for extracting finite abelian subgroups of torsion inner ultrahomogeneous groups.
    \begin{fact}
        \label{fct:finite_abelian_quotient}
        If $B$ is a finite abelian group and $A\leq B$, then there is a $B_0\leq B$ such that $B_0\cong B/A$.
    \end{fact}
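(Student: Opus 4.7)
The plan is to reduce to the case of finite abelian $p$-groups via the primary decomposition and then invoke the structure theorem. Writing $B=\bigoplus_p B_p$ as the direct sum of its Sylow subgroups forces $A=\bigoplus_p(A\cap B_p)$ and $B/A\cong \bigoplus_p B_p/(A\cap B_p)$, so producing the desired subgroup in each $p$-primary component and taking the direct sum reduces us to the case where $B$ is a finite abelian $p$-group.

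In that case, I would write $B\cong \bigoplus_{i=1}^n \bZ/p^{a_i}\bZ$ with $a_1\geq \ldots\geq a_n>0$, and appeal to the classical fact that the elementary divisors $p^{b_1}\geq p^{b_2}\geq \ldots$ of $B/A$ (padded with zeros up to length $n$) satisfy $b_i\leq a_i$ for all $i$. This can be seen from the Young-diagram description of finite abelian $p$-groups, or from the invariant factor theorem applied to the inclusion $A\hookrightarrow B$ of finitely generated $\bZ_{(p)}$-modules. Granted that, the subgroup $\bigoplus_i p^{a_i-b_i}\bZ/p^{a_i}\bZ\leq B$ is visibly isomorphic to $\bigoplus_i \bZ/p^{b_i}\bZ\cong B/A$, which gives the required $B_0$.

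An alternative (and arguably slicker) route is Pontryagin duality: for every finite abelian $H$ there is a noncanonical isomorphism $\widehat H\cong H$, and the short exact sequence $0\to A\to B\to B/A\to 0$ dualises to $0\to \widehat{B/A}\to \widehat B\to \widehat A\to 0$, so $B/A\cong \widehat{B/A}$ embeds into $\widehat B\cong B$. The only real content of the statement is the inequality $b_i\leq a_i$, or equivalently the self-duality of finite abelian groups; once that is granted, the construction of $B_0$ is immediate, so I do not anticipate any genuine obstacle.
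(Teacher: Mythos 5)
Your proposal is correct; the paper itself offers no argument beyond citing \cite[Exercise 43 of Chapter I]{Lang} and noting that the statement ``follows from the structure theorem for finite abelian groups,'' which is exactly your first route (primary decomposition plus the componentwise inequality $b_i\leq a_i$ on elementary divisors). Your Pontryagin duality argument is a clean, self-contained alternative that the paper does not mention, but both of your routes are standard and sound, so there is nothing to object to.
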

    \begin{proof}
        This is \cite[Exercise 43 of Chapter I]{Lang}, follows from the structure theorem for finite abelian groups.
    \end{proof}

    \begin{corollary}
        \label{cor:abelian_automorphism_group_direct}
        If $\Gamma$ is torsion inner ultrahomogeneous, $A\leq \Gamma$ is finite, $B\leq \Aut(A)$ is abelian of order coprime with the order of $A$ and $A_0\leq A$ is abelian and fixed pointwise by $B$. Then $\Gamma$ has a subgroup isomorphic to $A_0\times B$.
    \end{corollary}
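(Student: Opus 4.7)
The plan is to pass from the internal semidirect-product decomposition produced by the preceding corollary to an internal direct product, by splitting off an isomorphic copy of $B$ inside the lifted abelian group, using the structure theorem for finite abelian groups.

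First, I would apply Corollary~\ref{cor:abelian_automorphism_group_semidirect} to obtain a finite abelian $C\leq N(A)$ of order coprime with $\lvert A\rvert$, whose image under the conjugation map $\pi\colon N(A)\to \Aut(A)$ is exactly $B$; in particular $A\cap C=\{1\}$. Since every element of $B$ fixes $A_0$ pointwise and every element of $C$ acts on $A$ through $B$, each element of $C$ centralises $A_0$, so $[C,A_0]=1$ and $A_0\cap C\subseteq A\cap C=\{1\}$.

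Next, since $\pi\restr C\colon C\to B$ is a surjection of finite abelian groups, Fact~\ref{fct:finite_abelian_quotient} applied to $C$ and the subgroup $\ker(\pi\restr C)\leq C$ yields a subgroup $B_0\leq C$ with $B_0\cong C/\ker(\pi\restr C)\cong B$. Because $B_0\leq C$, the relations $[B_0,A_0]=1$ and $B_0\cap A_0=\{1\}$ are inherited from $C$. Thus $\langle A_0,B_0\rangle = A_0\cdot B_0$ is the internal direct product $A_0\times B_0\cong A_0\times B$ inside $\Gamma$, as required.

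The main obstacle is really already contained in Corollary~\ref{cor:abelian_automorphism_group_semidirect}, where inner ultrahomogeneity does the heavy lifting; once $C$ is in hand, extracting a direct complement of $A_0$ isomorphic to $B$ is pure finite abelian group theory via Fact~\ref{fct:finite_abelian_quotient}, with no further appeal to $\Gamma$.
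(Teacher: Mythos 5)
Your proof is correct and follows essentially the same route as the paper: both obtain $C$ from Corollary~\ref{cor:abelian_automorphism_group_semidirect} and then invoke Fact~\ref{fct:finite_abelian_quotient}. The only (immaterial) difference is that you apply that fact to $C$ alone to split off a subgroup $B_0\cong B$ and then form the internal direct product $A_0B_0$, whereas the paper applies it to the finite abelian group $\langle A_0,C\rangle$, which surjects onto $A_0\times B$.
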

    \begin{proof}
        Let $C$ be as in Corollary~\ref{cor:abelian_automorphism_group_semidirect}. Then we have a natural epimorphism $\langle A_0,C\rangle\to A_0\rtimes B=A_0\times B$ (because $B$ acts trivially on $A_0$). Furthermore, $C$ clearly centralises $A_0$, so $\langle A_0,C\rangle$ is abelian. Since $\langle A_0,C\rangle$ is finite abelian (because it is finitely generated abelian and torsion), by Fact~\ref{fct:finite_abelian_quotient}, it has a subgroup isomorphic to $A_0\times B$.
    \end{proof}

    \begin{corollary}
        \label{cor:abelian_automorphism_group}
        If $\Gamma$ is torsion inner ultrahomogeneous, $A\leq \Gamma$ is finite and $B\leq \Aut(A)$ is finite abelian of order coprime with the order of $A$, then $\Gamma$ has a subgroup isomorphic to $B$.
    \end{corollary}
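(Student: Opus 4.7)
The plan is to deduce this directly from Corollary~\ref{cor:abelian_automorphism_group_direct} by choosing the trivial subgroup $A_0 = \{1\} \leq A$. This subgroup is abelian and is (vacuously) fixed pointwise by every automorphism in $B$, so the hypotheses of Corollary~\ref{cor:abelian_automorphism_group_direct} are satisfied with this choice.

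Applying that corollary yields a subgroup of $\Gamma$ isomorphic to $A_0 \times B = \{1\} \times B \cong B$, which is exactly what we want. There is no real obstacle here: the statement is simply the special case $A_0 = \{1\}$ of the previous corollary, included presumably because it is the cleanest formulation one actually wants to invoke later (existence of abelian subgroups of $\Gamma$ arising from automorphism groups of finite subgroups, without needing to track any pointwise-fixed part).
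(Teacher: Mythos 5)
Your proposal is correct and is exactly the paper's proof: the paper also deduces the corollary as ``immediate by Corollary~\ref{cor:abelian_automorphism_group_direct} with trivial $A_0$.'' Nothing to add.
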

    \begin{proof}
        Immediate by Corollary~\ref{cor:abelian_automorphism_group_direct} with trivial $A_0$.
    \end{proof}

    \begin{remark}
        In some of the above corollaries, we can weaken the hypothesis to only assume that $N(A)$ is torsion, and not necessarily $\Gamma$, and $A$ is possibly infinite, but $B$ has no element of the same order as a nonidentity element of $A$. Then the $C$ in the conclusion will have the same property.
    \end{remark}

    \section{Group-theoretic properties}

    \subsection{Groups of finite exponent}
    An inner ultrahomogeneous group can have exponent $1, 2$ or $6$, as witnessed by the finite ultrahomogeneous groups.

    However, it is not clear whether an infinite ultrahomogeneous group can have finite exponent. In this section, we provide (in Theorem~\ref{thm:char_of_finite_exp}) several characterisations of the inner ultrahomogeneous groups of finite exponent in terms of their abelian subgroups.

    \begin{lemma}
        \label{lem:2_explosion}
        Suppose $\Gamma$ is torsion inner ultrahomogeneous and it has a subgroup isomorphic to $(\bZ/2\bZ)^6$. Then for each $n$, it has a subgroup isomorphic to $(\bZ/2\bZ)^n$.
    \end{lemma}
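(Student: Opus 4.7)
The plan is to prove an inductive step: if $V \cong (\bZ/2\bZ)^n$ embeds in $\Gamma$ for some $n \geq 6$, then $(\bZ/2\bZ)^m$ embeds in $\Gamma$ for some $m > n$. Iterating starting from the hypothesised $n = 6$ (and noting that smaller ranks embed in larger ones) then yields $(\bZ/2\bZ)^n$ for every $n$.

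For the step, I would view $V$ as a $\bZ/2\bZ$-vector space, decompose $V = U \oplus W$ with $\dim U \geq \dim W \geq 3$ (e.g.\ $\dim U = \dim W = 3$ when $n = 6$, or $\dim W = 3$, $\dim U = n-3$ in general), and consider the abelian subgroup $\Sigma \leq \Aut(V)$ consisting of the transvection-like automorphisms $\sigma_X \colon u+w \mapsto u+w+X(w)$ indexed by $X \in \Hom(W, U)$. Then $\Sigma \cong \Hom(W,U) \cong (\bZ/2\bZ)^{\dim U \cdot \dim W}$, each $\sigma_X$ is an involution (or trivial), and each $\sigma_X$ fixes $U$ pointwise. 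Choosing an injective $X_0 \in \Hom(W,U)$ (possible because $\dim W \leq \dim U$), we have $\Fix(\sigma_{X_0}) = U \subseteq \Fix(\sigma_X)$ for all $X$, so the hypothesis of Lemma~\ref{lem:commuting_automorphisms_with_fixed_points} is met. The lemma then supplies commuting elements $g_X \in \Gamma$ with $v^{g_X} = \sigma_X(v)$ for all $v \in V$.

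Set $G = \langle g_X : X \in \Hom(W, U) \rangle$, which is abelian. Since each $g_X$ centralises $U$ (its conjugation action fixes $U$ pointwise), the subgroup $UG \leq \Gamma$ is also abelian, and finite abelian because $\Gamma$ is torsion. The conjugation action $UG \to \Aut(V)$ has kernel $K := UG \cap C_\Gamma(V)$ and image exactly $\Sigma$, so $UG/K \cong (\bZ/2\bZ)^{\dim U \cdot \dim W}$. By Fact~\ref{fct:finite_abelian_quotient}, $UG$ (and hence $\Gamma$) contains a subgroup isomorphic to this quotient, and since $\dim U \cdot \dim W = 3(n-3) > n$ for $n \geq 6$, the inductive step is complete.

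The main technical obstacle is the fixed-point hypothesis of Lemma~\ref{lem:commuting_automorphisms_with_fixed_points}: it forces $X_0$ to be injective and hence $\dim W \leq \dim U$, which is the reason the threshold $n \geq 6$ (allowing a split with both summands of dimension $\geq 3$) suffices. Everything else reduces to a direct calculation with transvection-type automorphisms and an application of Fact~\ref{fct:finite_abelian_quotient}.
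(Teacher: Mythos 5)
Your proof is correct and takes essentially the same route as the paper: both decompose $(\bZ/2\bZ)^n$ as $U\oplus W$, use the commuting family of transvections indexed by $\Hom(W,U)$ (with an injective $X_0$ supplying the distinguished $\sigma_0$ required by Lemma~\ref{lem:commuting_automorphisms_with_fixed_points}), and then extract the subgroup via Fact~\ref{fct:finite_abelian_quotient}. The only difference is the choice of split --- the paper uses a balanced $\lfloor n/2\rfloor+\lfloor n/2\rfloor$ decomposition giving $\lfloor n/2\rfloor^2>n$, while you use $3+(n-3)$ giving $3(n-3)>n$ --- which is immaterial.
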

    \begin{proof}
        Fix any $n$. Suppose $G=(\bZ/2\bZ)^n$ embeds into $\Gamma$.

        Put $m\coloneqq \lfloor n/2\rfloor$. Write elements of $G$ as $(v_1,w,v_2)$, where $v_1,v_2\in (\bZ/2\bZ)^m$ and $w$ is either an empty tuple or in $\bZ/2\bZ$ (depending on whether $n$ is even or odd).

        For each $f\in \End((\bZ/2\bZ)^m)$, let $\sigma_f$ be the endomorphism of $G$ given by $\sigma_f(v_1,w,v_2)=(v_1+f(v_2),w,v_2)$. Observe that $\sigma_f\circ\sigma_g=\sigma_{f+g}$.
        Thus $\sigma_{-}$ defines an isomorphism between the additive group of $\End((\bZ/2\bZ)^m)$ (which is elementary abelian of order $2^m=2^{\lfloor n/2\rfloor}$) and a subgroup of $\Aut(G)$.
        Furthermore, they all fix elements of the form $(v_1,w,0)$, and it is not hard to see that $\sigma_0=\sigma_{\id}$ fixes no other elements.

        Thus, by Lemma~\ref{lem:commuting_automorphisms_with_fixed_points}, there is an abelian (and hence finite abelian, since $\Gamma$ is torsion) $B\leq\Gamma$ which admits an epimorphism onto $(\bZ/2\bZ)^{\lfloor n/2\rfloor^2}$, so by Fact~\ref{fct:finite_abelian_quotient}, the group $(\bZ/2\bZ)^{\lfloor n/2\rfloor^2}$ embeds into $\Gamma$.

        Since $\lfloor n/2\rfloor^2>n$ for $n\geq 6$, the conclusion follows.
    \end{proof}

    \begin{remark}
        In fact, $\Aut((\bZ/2\bZ)^5)$ has a subgroup $\Sigma$ isomorphic to $(\bZ/2\bZ)^6$, but there is no $\sigma_0\in \Sigma$ which fixes all the common fixed points of $\Sigma$, so Lemma~\ref{lem:commuting_automorphisms_with_fixed_points} does not apply.
    \end{remark}

    \begin{lemma}
        \label{lem:cyclic_2_group}
        Suppose $\Gamma$ is torsion inner ultrahomogeneous. Then for $k\geq 2$, $m\geq 0$:
        \begin{itemize}
            \item
            if the group $(\bZ/2^k\bZ)\times (\bZ/2\bZ)^m$ embeds into $\Gamma$, then so does $(\bZ/2^{k-2}\bZ)\times (\bZ/2\bZ)^{m+1}$,
            \item
            if the group $\bZ/2^k\bZ$ embeds into $\Gamma$, then so does $(\bZ/2\bZ)^{\lceil k/2\rceil}$.
        \end{itemize}
    \end{lemma}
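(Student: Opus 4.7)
The plan is to prove the first bullet directly via Lemma~\ref{lem:commuting_automorphisms_with_fixed_points} and Fact~\ref{fct:finite_abelian_quotient}, then obtain the second by iterating the first.

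For the first bullet, I would write an embedded copy of $(\bZ/2^k\bZ)\times(\bZ/2\bZ)^m$ in $\Gamma$ as $A=C\oplus V$ with $C=\langle c\rangle$ cyclic of order $2^k$. The key step is to produce an abelian $\Sigma\leq\Aut(A)$ isomorphic to $(\bZ/2^{k-2}\bZ)\oplus(\bZ/2\bZ)^{m+1}$ admitting a single $\sigma_0\in\Sigma$ whose fixed set already equals the common fixed set of $\Sigma$. My candidate for $\Sigma$ is generated by the following automorphisms of $A$, all trivial on $V$:
\begin{itemize}
    \item $\alpha\colon c\mapsto c^5$, of order $2^{k-2}$ in $\Aut(C)$ (for $k\geq 3$ this uses $(\bZ/2^k\bZ)^{\times}\cong\bZ/2\bZ\oplus\bZ/2^{k-2}\bZ$ generated by $-1$ and $5$; for $k=2$ it is trivial);
    \item $\beta\colon c\mapsto c^{-1}$, of order $2$;
    \item $\sigma_u\colon c\mapsto c\cdot u$ for each $u\in V$, of order at most $2$.
\end{itemize}
A routine check shows these commute pairwise, that $\alpha^i\beta^\varepsilon\sigma_u(c)=c^{(-1)^\varepsilon 5^i}\cdot u$, and hence that $\Sigma\cong(\bZ/2^{k-2}\bZ)\oplus(\bZ/2\bZ)\oplus V$. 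Next I would verify that $\Fix(\beta)=\langle c^{2^{k-1}}\rangle\oplus V$ is contained in $\Fix(\alpha)$ (since $2^{k-1}\mid j$ forces $2^{k-2}\mid j$) and in each $\Fix(\sigma_u)$ (since $j$ is then even), so $\Fix(\beta)$ already equals the common fixed set of $\Sigma$.

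At that point, Lemma~\ref{lem:commuting_automorphisms_with_fixed_points} with $\sigma_0=\beta$ supplies commuting $g_\sigma\in\Gamma$ with $a^{g_\sigma}=\sigma(a)$, and $B=\langle g_\sigma\mid\sigma\in\Sigma\rangle$ is finitely generated abelian, hence finite abelian (since $\Gamma$ is torsion), with conjugation image in $\Aut(A)$ all of $\Sigma$. Fact~\ref{fct:finite_abelian_quotient} then embeds $\Sigma\cong(\bZ/2^{k-2}\bZ)\times(\bZ/2\bZ)^{m+1}$ into $B\leq\Gamma$. The second bullet follows by iterating: starting from $\bZ/2^k\bZ\leq\Gamma$ (the case $m=0$), each application of the first bullet replaces $\bZ/2^j\bZ$ by $\bZ/2^{j-2}\bZ$ and adds a copy of $\bZ/2\bZ$, so after $\lfloor k/2\rfloor$ rounds we reach a copy of $\bZ/2^{k\bmod 2}\bZ\oplus(\bZ/2\bZ)^{\lfloor k/2\rfloor}$, which contains $(\bZ/2\bZ)^{\lceil k/2\rceil}$ in either parity.

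The main obstacle is engineering $\Sigma$ so that it both realises $(\bZ/2^{k-2}\bZ)\oplus(\bZ/2\bZ)^{m+1}$ inside $\Aut(A)$ and contains a single element whose fixed set coincides with the common fixed set of $\Sigma$; taking $\alpha$ to be multiplication by $5$ (rather than e.g.\ by some other unit) is precisely what lets the cyclic part of $\Sigma$ reach order $2^{k-2}$ while keeping $\Fix(\beta)$ minimal.
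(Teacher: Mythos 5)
Your proposal is correct and follows essentially the same route as the paper: the paper uses multiplication by $3$ (also of order $2^{k-2}$ in $(\bZ/2^k\bZ)^\times$) in place of your $\alpha\colon c\mapsto c^5$, takes $\sigma_0$ to be the inversion $c\mapsto c^{-1}$ exactly as you do, and its maps $\tau_i\colon(a,\bar j)\mapsto(a,\bar j+ae_i)$ coincide with your $\sigma_{e_i}$ written additively, before invoking Lemma~\ref{lem:commuting_automorphisms_with_fixed_points} and Fact~\ref{fct:finite_abelian_quotient} and iterating for the second bullet.
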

    \begin{proof}
        The second dot follows from the first by straightforward induction. Consider first the first dot, in the case of $k\geq 3$ (the case of $k=2$ is trivial). Consider the following automorphisms of $G=(\bZ/2^k\bZ)\times (\bZ/2\bZ)^m$:
        \begin{itemize}
            \item
            $\sigma_1(a,j_1,\ldots,j_m)=(3a,j_1,\ldots,j_m)$,
            \item
            $\sigma_2(a,j_1,\ldots,j_m)=(-a,j_1,\ldots,j_m)$,
            \item
            $\tau_i(a,j_1,\ldots,j_m)=(a,j_1,\ldots,j_{i-1},j_i+a,j_{i+1},\ldots,j_m)$, where $i=1,\ldots,m$.
        \end{itemize}
        (Here, we operate mod $2^{k}$ on the first coordinate and mod $2$ on the others.)

        It is easy to see that $\sigma_1,\sigma_2$ and all $\tau_i$ commute, it is well-known that the order of $\sigma_1$ (= the order of $3\in (\bZ/2^k\bZ)^\times$) is $2^{k-2}$ and $\sigma_2\notin \langle\sigma_1\rangle$; it is easy to see that the orders of all the others are all equal to $2$. Using this, it is not hard to see that $\sigma_1,\sigma_2,\tau_1,\ldots,\tau_m$ naturally generate a group isomorphic to $(\bZ/2^{k-2}\bZ)\times(\bZ/2\bZ)^{m+1}$.

        Then $(2^{k-1}\bZ/2^k\bZ)\times (\bZ/2\bZ)^m$ is the set of fixed points of $\sigma_2$, and it is easy to see that its elements are fixed by $\sigma_1$ and each $\tau_i$. The conclusion follows from Lemma~\ref{lem:commuting_automorphisms_with_fixed_points} and Fact~\ref{fct:finite_abelian_quotient}.
    \end{proof}

    \begin{corollary}
        \label{cor:2_group_to_vector}
        If $\Gamma$ is torsion inner ultrahomogeneous and $K$ is such that $\Gamma$ has no subgroup isomorphic to $(\bZ/2\bZ)^{K+1}$, then it $\Gamma$ has no abelian $2$-subgroup of order greater than $2^{2K^2}$.
    \end{corollary}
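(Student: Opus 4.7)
The plan is to reduce to finite abelian $2$-subgroups and then apply the structure theorem for finite abelian groups together with Lemma~\ref{lem:cyclic_2_group}, which gives two independent bounds whose product yields the exponent $2K^2$.

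First I would reduce to the finite case. Since $\Gamma$ is torsion, every finitely generated subgroup of an abelian $2$-subgroup $A\leq \Gamma$ is a finitely generated torsion abelian group and therefore finite, so $A$ is the directed union of its finite subgroups. It is therefore enough to show $\lvert A_0\rvert \leq 2^{2K^2}$ uniformly for all \emph{finite} abelian $2$-subgroups $A_0\leq\Gamma$.

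Next, for a finite abelian $2$-group $A_0\leq \Gamma$, I would use the structure theorem to write $A_0\cong \bigoplus_{i=1}^{r}\bZ/2^{k_i}\bZ$ with $k_1,\ldots,k_r\geq 1$, and bound $r$ and each $k_i$ separately. On the one hand, the $2$-torsion subgroup of $A_0$ is isomorphic to $(\bZ/2\bZ)^r$ and embeds into $\Gamma$; by hypothesis this forces $r\leq K$. On the other hand, for each $i$ the cyclic group $\bZ/2^{k_i}\bZ$ is a subgroup of $A_0$ and hence of $\Gamma$, so the second bullet of Lemma~\ref{lem:cyclic_2_group} makes $(\bZ/2\bZ)^{\lceil k_i/2\rceil}$ embed into $\Gamma$, whence by hypothesis $\lceil k_i/2\rceil\leq K$, i.e.\ $k_i\leq 2K$. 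Combining the two estimates,
\[
\lvert A_0\rvert = 2^{\sum_i k_i}\leq 2^{r\cdot 2K}\leq 2^{2K^2},
\]
as required.

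I do not expect a real obstacle: both ingredients (the structure theorem and Lemma~\ref{lem:cyclic_2_group}) are already in place, and the two bounds — rank at most $K$, each invariant factor of order at most $2^{2K}$ — are completely independent, so they multiply directly to give the target exponent. The only point to be a little careful about is the reduction to finite subgroups, but this is immediate once one notes that $\Gamma$ being torsion forces every finitely generated subgroup of $A$ to be finite.
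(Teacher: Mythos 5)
Your proof is correct and follows essentially the same route as the paper: reduce to finite abelian $2$-subgroups by local finiteness, decompose via the structure theorem, bound the number of cyclic factors by $K$ using the hypothesis on $(\bZ/2\bZ)^{K+1}$, and bound each factor's order by $2^{2K}$ via the second bullet of Lemma~\ref{lem:cyclic_2_group}. The paper's proof is just a terser version of exactly this argument.
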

    \begin{proof}
        Let $A\leq\Gamma$ be an abelian $2$-group. We may assume that it is finite (since every abelian $2$-group is locally finite).

        By fundamental theorem on finite abelian groups, $A$ is the direct sum of some $m$ nontrivial cyclic $2$-groups. Clearly, there can be no more than $K$ of them. Furthermore, by Lemma~\ref{lem:cyclic_2_group}, each of them has order at most $2^{2K}$. The conclusion follows.
    \end{proof}

    \begin{lemma}
        \label{lem:odd_prime_divisor}
        Suppose $\Gamma$ is torsion and inner ultrahomogeneous. Let $A\leq \Gamma$ be a finite abelian subgroup of order divisible by an odd prime $p$. Then $\Gamma$ has a finite abelian subgroup $B$ of order divisible by $\frac{p-1}{p}\lvert A\rvert$. (In particular, it has an abelian subgroup of order exactly $\frac{p-1}{p}\lvert A\rvert$.)
    \end{lemma}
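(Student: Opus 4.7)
The plan is to produce the desired subgroup as $H = \langle A_0, g_\sigma : \sigma \in \Sigma\rangle$, where $A_0\leq A$ is a large direct summand and $\Sigma\leq\Aut(A)$ is an abelian family fixing $A_0$ pointwise, with the commuting realisations $g_\sigma \in \Gamma$ supplied by Lemma~\ref{lem:commuting_automorphisms_with_fixed_points}.

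By the structure theorem for finite abelian groups, I split off one cyclic $p$-primary summand of $A$ and write $A = \bZ/p^k\bZ \oplus A_0$ with $k\geq 1$ and $|A_0|=|A|/p^k$. Take $\Sigma$ to be the image of $(\bZ/p^k\bZ)^\times$ in $\Aut(A)$, acting as multiplication on the first factor and trivially on $A_0$; this is abelian of order $(p-1)p^{k-1}$. Since $p$ is odd, $(\bZ/p^k\bZ)^\times$ contains a primitive $(p-1)$-th root of unity $\omega$, and $\omega - 1$ is a unit modulo $p^k$, so the corresponding $\sigma_0\in\Sigma$ has fixed set exactly $A_0$; every other $\sigma\in\Sigma$ also fixes $A_0$ pointwise by construction, so the hypotheses of Lemma~\ref{lem:commuting_automorphisms_with_fixed_points} are satisfied (with $\sigma_0$ playing the role of the distinguished element).

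That lemma yields commuting $g_\sigma\in\Gamma$ realising each $\sigma$ by conjugation on $A$. Since the $g_\sigma$ pairwise commute and each commutes with the abelian $A_0$ (because $\sigma$ fixes $A_0$ pointwise), the group $H$ is abelian; being finitely generated and torsion, it is finite. Writing $G=\langle g_\sigma : \sigma\in\Sigma\rangle$ and $G_0$ for the kernel of the conjugation map $G\to \Aut(A)$, one has $|G|=|G_0||\Sigma|$; the key observation is then that $A_0\cap G\leq G_0$, because any element of $A_0\cap G$ lies in the abelian group $A$ and so centralises $A$. Consequently $|H|=|A_0||G|/|A_0\cap G|$ is divisible by $|A_0||\Sigma|=\frac{p-1}{p}|A|$, as required; the parenthetical then follows because every finite abelian group has a subgroup of every order dividing its size. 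I expect the containment $A_0\cap G\leq G_0$ to be the main (but short) technical point, as it is what upgrades the naive lower bound on $|H|$ into an exact divisibility.
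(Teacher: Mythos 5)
Your argument is correct, but it routes through heavier machinery than the paper needs. The paper's proof uses the same decomposition $A\cong(\bZ/p^k\bZ)\times A'$ and the same counting device (the kernel of the conjugation map into $\Aut(A)$ contains $A'$ because $A$ is abelian, while the image has order divisible by $(p-1)p^{k-1}$), but it exploits the fact that $\Aut(\bZ/p^k\bZ)$ is \emph{cyclic} of order $p^{k-1}(p-1)$ for odd $p$: it picks a single $\sigma\in\Aut(A)$ of that order fixing $A'$ pointwise, realises it by one element $g\in\Gamma$ using only the definition of inner ultrahomogeneity, and takes $B=\langle g,A'\rangle$. You instead realise the whole unit group $(\bZ/p^k\bZ)^\times$ as a commuting family $\{g_\sigma\}$ via Lemma~\ref{lem:commuting_automorphisms_with_fixed_points}, which obliges you to exhibit a distinguished $\sigma_0$ whose fixed set is contained in the common fixed set --- your choice of a primitive $(p-1)$-th root of unity $\omega$, with $\omega-1$ a unit mod $p^k$, does this correctly --- and to invoke that lemma's recursive construction. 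Since your $\Sigma$ is itself cyclic (again because $p$ is odd), a single generator of it is exactly the paper's $\sigma$, and the commuting-family lemma buys you nothing here. Your closing computation ($A_0\cap G\leq G_0$ because elements of $A$ centralise $A$, then the product formula $\lvert A_0G\rvert=\lvert A_0\rvert\lvert G\rvert/\lvert A_0\cap G\rvert$) is a correct, slightly more explicit version of the paper's one-line count, and your justification of the parenthetical (finite abelian groups have subgroups of every order dividing the group order) matches the paper's. So: correct, same skeleton, but the appeal to Lemma~\ref{lem:commuting_automorphisms_with_fixed_points} can be replaced by a single application of inner ultrahomogeneity.
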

    \begin{proof}
        By the fundamental theorem on finitely generated abelian groups, $A\cong (\bZ/p^k\bZ)\times A'$ for some $A'$; for simplicity, suppose $A=(\bZ/p^k\bZ)\times A'$. Note that the automorphism group of $\bZ/p^k\bZ$ is cyclic of order $p^{k-1}\cdot(p-1)$. It follows that there is some $\sigma\in\Aut(A)$ of order $p^{k-1}\cdot (p-1)$ which fixes $A'$. By inner ultrahomogeneity, there is a $g\in \Gamma$ such that $a^g=\sigma(a)$ for $a\in A$.

        We claim that $B=\langle g,A'\rangle$ works. Indeed, it is clearly abelian, and the conjugation map $B\to \Aut(A)$ contains $A'$ in its kernel and $\sigma$ in its image. Hence the order of $B$ is a multiple of the order of $A'$ (=$\frac{1}{p^k}\lvert A\rvert$) times the order of $\sigma$ (=$p^{k-1}\cdot (p-1)$). The conclusion follows. (The parenthetical remark easily follows from the observation that if $A$ is finite abelian and $q$ is a prime dividing $\lvert A\rvert$, then $A$ has a subgroup of index $q$.)
    \end{proof}

    \begin{corollary}
        \label{cor:finite_group_to_2_group}
        If $\Gamma$ is torsion inner ultrahomogeneous and it has no abelian $2$-subgroup of order $2^{K+1}$, then it has no abelian subgroup of order greater than $2^{2K}$.
    \end{corollary}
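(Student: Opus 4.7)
The plan is to show that from any finite abelian subgroup $A \leq \Gamma$ of order $n$ we can extract an abelian $2$-subgroup of $\Gamma$ of order at least $\sqrt{n}$; the hypothesis then forces $\sqrt{n} \leq 2^K$, i.e.\ $n \leq 2^{2K}$. Since $\Gamma$ is torsion, every abelian subgroup is locally finite, so it suffices to treat finite $A$.

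I would iterate Lemma~\ref{lem:odd_prime_divisor} using its parenthetical refinement. Write $|A| = 2^a m$ with $m$ odd; if $m = 1$ we are done, so assume $m > 1$. Set $A_0 \coloneqq A$; at each stage $i$ with $|A_i| = 2^{a_i} m_i$ and $m_i > 1$, pick any odd prime $p_i \mid m_i$, write $p_i - 1 = 2^{s_i} q_i$ with $q_i$ odd, and choose $A_{i+1} \leq \Gamma$ abelian of order exactly $\tfrac{p_i-1}{p_i}|A_i|$. Then $a_{i+1} = a_i + s_i$ and $m_{i+1} = q_i m_i / p_i$. Since $q_i \leq (p_i - 1)/2 < p_i/2$, the odd part strictly more than halves at each step, so the process terminates at some $t$ with $m_t = 1$; the resulting $A_t$ is an abelian $2$-subgroup of $\Gamma$.

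The decisive estimate is that for every odd prime $p$ with $p - 1 = 2^s q$ ($q$ odd), $(2^s)^2 \geq p/q$. Indeed, this is equivalent to $2^s \geq p/(p-1)$, which holds because $2^s \geq 2 \geq 3/2 \geq p/(p-1)$ for $p \geq 3$. Multiplying the estimate over $i < t$ and using the telescoping identity $\prod_{i<t} p_i/q_i = m_0/m_t = m$ (from $m_{i+1}/m_i = q_i/p_i$), we obtain
\[
2^{2V} = \prod_{i<t}(2^{s_i})^2 \geq \prod_{i<t} p_i/q_i = m,
\]
where $V = \sum_i s_i$. Hence $|A_t| = 2^{a+V} \geq 2^a \sqrt{m} \geq \sqrt{2^a m} = \sqrt{n}$, and the hypothesis that $\Gamma$ has no abelian $2$-subgroup of order $2^{K+1}$ forces $|A_t| \leq 2^K$, giving $n \leq 2^{2K}$.

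The main point to spot is the right ``exchange rate'' $(2^s)^2 \geq p/q$, quantifying that at each step the $2$-part grows (in square) at least as fast as the odd part shrinks; once this is in hand, the telescoping packages the iterated bound cleanly. The bookkeeping of $2$-parts and odd parts through Lemma~\ref{lem:odd_prime_divisor} is routine but really needs the ``order exactly $\tfrac{p-1}{p}|A|$'' refinement rather than just divisibility, in order to get a clean recursion.
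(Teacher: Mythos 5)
Your proof is correct and follows essentially the same route as the paper's: both iterate Lemma~\ref{lem:odd_prime_divisor} (with its exact-order refinement) to trade the odd part of $\lvert A\rvert$ for $2$-part, then telescope to get the exponent $2K$. The only difference is in the bookkeeping — the paper separately bounds the number of steps by $K$ and uses $\frac{p-1}{p}\geq\frac12$ per step, whereas you fold both losses into the single per-step inequality $(2^s)^2\geq p/q$ — but the core argument and the resulting bound are identical.
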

    \begin{proof}
        Note that if $2^k$ divides the order of a finite abelian group $A$, then $A$ has a subgroup of order $2^k$.

        Fix any abelian $A\leq\Gamma$; we will show that $\lvert A\rvert\leq 2^{2K}$. Since a torsion abelian group is locally finite, we may assume without loss of generality that $A$ is finite. We recursively define a finite sequence of abelian groups $A_j\leq \Gamma$ and integers $l_j,n_j$ such $n_j$ is odd and the order of $A_j$ is $2^{l_j}n_j$, starting with $A_0\coloneqq A$.

        Suppose we already have $A_0,\ldots, A_j$.
        If $n_j=1$, then we terminate. Otherwise, let $p_{j+1}$ be the largest prime divisor of $n_j$ and write $p_{j+1}=2^{k_{j+1}}m_{j+1}+1$, where $m_{j+1}$ is odd. Then by Lemma~\ref{lem:odd_prime_divisor}, there is a $A_{j+1}\leq \Gamma$ of order $\frac{2^{l_j}n_j(p_{j+1}-1)}{p_{j+1}}=2^{l_j+k_{j+1}}m_{j+1}\frac{n_j}{p_{j+1}}$. Put $n_{j+1}=m_{j+1}\frac{n_j}{p_{j+1}}$ and $l_{j+1}=l_j+k_{j+1}$, so that the order of $A_{j+1}$ is $2^{l_{j+1}}n_{j+1}$, as prescribed.

        Note that the sequence $l_j$ is strictly increasing and bounded by $K$. It follows that we terminate after at most $K$ steps, yielding some $A_{j_0}$ with $j_0\leq K$ which is an abelian $2$-group of order $2^{l_{j_0}}$, where $l_{j_0}=l_0+k_1+\ldots+k_{j_0}\leq K$. On the other hand, for $j=1,\ldots,j_0$, we have $n_{j}=m_j\frac{n_{j-1}}{p_j}=\frac{n_{j-1}(p_j-1)}{2^{k_j}p_j}\geq n_{j-1}\cdot 2^{-1-k_j}$.
        It follows that $n_{j}\geq n_0\cdot 2^{-k_1-k_1-\ldots-k_j-j}$, so in particular, $1=n_{j_0}\geq n_0\cdot 2^{-k_1-k_2-\ldots-k_{j_0}-j_0}$, so
        $n_0\leq 2^{k_1+k_2+\ldots+k_{j_0}+j_0}\leq 2^{2K-l_0}$, whence $\lvert A_0\rvert=n_0\cdot 2^{l_0}\leq 2^{2K}$.
    \end{proof}

    \begin{theorem}
        \label{thm:char_of_finite_exp}
        If $\Gamma$ is torsion inner ultrahomogeneous, then the following are equivalent:
        \begin{enumerate}
            \item
            all abelian subgroups of $\Gamma$ have order smaller than $2^{100}$,
            \item
            $\Gamma$ is of finite exponent (at most $(2^{100})!$),
            \item
            there is some finite abelian group which does not embed into $\Gamma$,
            \item
            $(\bZ/2\bZ)^6$ does not embed into $\Gamma$.
        \end{enumerate}
    \end{theorem}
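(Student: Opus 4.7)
The plan is to establish the cycle $(1)\Rightarrow(2)\Rightarrow(3)\Rightarrow(4)\Rightarrow(1)$. The easy arrows are $(1)\Rightarrow(2)\Rightarrow(3)$: each element of $\Gamma$ generates a cyclic (hence abelian) subgroup, so (1) forces every element of $\Gamma$ to have order less than $2^{100}$, whence the exponent of $\Gamma$ divides $\operatorname{lcm}\{1,\ldots,2^{100}-1\}$ and in particular divides $(2^{100})!$; and if $\Gamma$ has exponent $e<\infty$ then $\bZ/(e+1)\bZ$ is a finite abelian group that does not embed into $\Gamma$. For $(4)\Rightarrow(1)$, I would plug $K=5$ into Corollary~\ref{cor:2_group_to_vector} to conclude that no abelian $2$-subgroup of $\Gamma$ has order exceeding $2^{2\cdot 25}=2^{50}$, and then plug $K=50$ into Corollary~\ref{cor:finite_group_to_2_group} to obtain that no abelian subgroup of $\Gamma$ has order exceeding $2^{2\cdot 50}=2^{100}$.

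The real content lies in $(3)\Rightarrow(4)$, which I would argue by contrapositive: assuming $(\bZ/2\bZ)^6\leq\Gamma$, I show that every finite abelian group $G$ embeds into $\Gamma$. Lemma~\ref{lem:2_explosion} immediately yields $(\bZ/2\bZ)^n\leq\Gamma$ for every $n$. For any odd prime $q$ and any $d\geq 1$, let $r$ be the multiplicative order of $2$ modulo $q$, take $n=rd$, and observe that $\GL_n(\mathbf{F}_2)=\Aut((\bZ/2\bZ)^n)$ contains a block-diagonal copy of $(\bZ/q\bZ)^d$ coming from $d$ copies of a Singer cycle in $\mathbf{F}_{2^r}^\times$; this is an abelian subgroup of order $q^d$, coprime to $2^n$, so Corollary~\ref{cor:abelian_automorphism_group} delivers $(\bZ/q\bZ)^d\leq\Gamma$. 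Finally, given a finite abelian $G$ of exponent $e$, I would invoke Dirichlet's theorem to find an odd prime $q$ with $e\mid q-1$, write $G\cong\bigoplus_{i=1}^d\bZ/n_i\bZ$ with $n_i\mid e$, and embed $G$ into $(\mathbf{F}_q^\times)^d\leq \GL_d(\mathbf{F}_q)=\Aut((\bZ/q\bZ)^d)$ as a diagonal subgroup; since $|G|$ divides $(q-1)^d$ and is therefore coprime to $q^d$, one more application of Corollary~\ref{cor:abelian_automorphism_group} (with base $A=(\bZ/q\bZ)^d$, which embeds by the previous step) gives $G\leq\Gamma$.

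The main obstacle is that Corollary~\ref{cor:abelian_automorphism_group} requires the target abelian automorphism subgroup to have order coprime to the base $A$, which rules out using $A=(\bZ/2\bZ)^n$ directly as a base whenever $G$ has even exponent. The bootstrap through an odd-prime base $(\bZ/q\bZ)^d$ --- chosen via Dirichlet's theorem so that $q-1$ absorbs the entire exponent of $G$ --- is what circumvents this coprimality obstruction and lets us realise any finite abelian $G$, thereby closing the cycle.
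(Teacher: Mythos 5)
Your proposal is correct, and its skeleton is identical to the paper's: the same cycle of implications, with $(4)\Rightarrow(1)$ read off from Corollary~\ref{cor:2_group_to_vector} and Corollary~\ref{cor:finite_group_to_2_group} exactly as you do (your two applications yield ``order at most $2^{100}$'' rather than ``smaller than $2^{100}$'', but a subgroup of order exactly $2^{100}$ would itself be an abelian $2$-group of order exceeding $2^{50}$, so the edge case is vacuous), and with $(3)\Rightarrow(4)$ argued by contraposition through Lemma~\ref{lem:2_explosion} and Corollary~\ref{cor:abelian_automorphism_group}, bootstrapping through an odd prime to dodge the coprimality constraint. The one place you genuinely diverge is in how the faithful coprime action is manufactured. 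The paper lets a finite abelian $A$ of odd order $n$ act on $(\bZ/2\bZ)^n$ by its regular representation (permuting coordinates), and for even $|A|$ lets $A$ act the same way on $(\bZ/p\bZ)^{|A|}$ for any prime $p\nmid|A|$; this needs no number theory beyond the existence of such a $p$. You instead build linear actions: Singer cycles in $\GL_{rd}(\mathbf{F}_2)$ to plant $(\bZ/q\bZ)^d$ in $\Gamma$, and then a diagonal embedding of $G$ into $\GL_d(\mathbf{F}_q)$ for a Dirichlet prime $q\equiv 1\pmod{e}$. Both routes are valid; the regular-representation trick is more elementary (no Dirichlet, no Singer cycles) at the cost of acting on a much larger elementary abelian group, while your construction is more economical in the rank of the base group but imports standard facts about finite fields and primes in arithmetic progressions.
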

    \begin{proof}
        It is clear that 1$\Rightarrow$2$\Rightarrow$3.

        For 3$\Rightarrow$4 we argue by contraposition. Suppose (4) fails. Fix a finite abelian group $A$; we claim that it embeds into $\Gamma$. Let $n=\lvert A\rvert$. Consider first the case when $n$ is odd, and note that by Lemma~\ref{lem:2_explosion}, $\Gamma$ has a subgroup $B\cong (\bZ/2\bZ)^n$. Note that $A$ acts faithfully on $B$,
        and their orders are coprime. By Corollary~\ref{cor:abelian_automorphism_group}, $\Gamma$ has a subgroup isomorphic to $A$. Otherwise, if $n$ is even, let $p$ be a prime not dividing $\lvert A\rvert$. Then $p^n$ is odd, so by the odd $n$ case, $\Gamma$ has a subgroup isomorphic to $(\bZ/p\bZ)^n$ on which $A$ acts faithfully, and similarly, Corollary~\ref{cor:abelian_automorphism_group} implies that $A$ embeds into $\Gamma$.

        It remains to show that 4$\Rightarrow$1. But its contrapositive is an easy consequence of  Corollary~\ref{cor:finite_group_to_2_group} and Corollary~\ref{cor:2_group_to_vector}.
    \end{proof}

    \begin{remark}
        The explicit estimates in Theorem~\ref{thm:char_of_finite_exp} are very rough and their proofs can easily be refined to obtain much lower ones. It is also not very difficult to see that instead of $(\bZ/2\bZ)^6$ in (4) we can put $(\bZ/p\bZ)^6$ for any prime $p$, or indeed $G^6$ for any nontrivial finite $G$. Optimistically, one can conjecture that the only inner ultrahomogeneous groups of finite exponent are the three finite ones, which would be improve the bounds a great deal.
    \end{remark}

    \begin{remark}
        \label{rem:centr_orbit}
        Inner ultrahomogeneity easily implies that inner automorphisms are dense in $\Aut(\Gamma)$, which implies in particular that if $A\leq \Gamma$ is finitely generated, then orbits of $\Aut(\Gamma/A)$ (the stabiliser of $A$) in $\Gamma$ and its finite powers are exactly the orbits of $C(A)$ acting by conjugation.
    \end{remark}

    \begin{proposition}
        \label{prop:fin_exp_almost_fg}
        If $\Gamma$ is inner ultrahomogeneous of finite exponent, then $\Aut(\Gamma)$ is discrete, i.e.\ there is a finitely generated $A\leq \Gamma$ such that $\Aut(\Gamma/A)$ is trivial and every automorphism of $\Gamma$ is inner (in particular, $\Aut(\Gamma)\cong \Gamma/Z(\Gamma)$).
    \end{proposition}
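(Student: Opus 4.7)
The plan is to produce a finitely generated $A\leq\Gamma$ with $C(A)=Z(\Gamma)$, from which all the claims of the proposition will follow. The key reduction is Remark~\ref{rem:centr_orbit}: the $\Aut(\Gamma/A)$-orbits on $\Gamma$ coincide with the $C(A)$-conjugation orbits, so $\Aut(\Gamma/A)=\{\id\}$ is equivalent to $C(A)\leq Z(\Gamma)$, hence to $C(A)=Z(\Gamma)$ since the reverse inclusion is automatic.

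To build $A$, I will exploit Theorem~\ref{thm:char_of_finite_exp}: because $\Gamma$ has finite exponent, every abelian subgroup has order less than $2^{100}$, so a maximum-sized abelian subgroup $A_0\leq\Gamma$ exists and is automatically finite. Maximality forces $Z(\Gamma)\subseteq A_0$ (since $\langle A_0,Z(\Gamma)\rangle$ is abelian) as well as $C(A_0)=A_0$ (any $g\in C(A_0)$ makes $\langle A_0,g\rangle$ abelian, hence $g\in A_0$). For each $a\in A_0\setminus Z(\Gamma)$, pick some $b_a\in\Gamma$ with $[b_a,a]\neq 1$, and set $A\coloneqq\langle A_0,\{b_a:a\in A_0\setminus Z(\Gamma)\}\rangle$; this is finitely generated because $A_0$ is finite. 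From $A\supseteq A_0$ we get $C(A)\subseteq C(A_0)=A_0$, while each $a\in A_0\setminus Z(\Gamma)$ is excluded from $C(A)$ by the witness $b_a\in A$; hence $C(A)\subseteq A_0\cap Z(\Gamma)=Z(\Gamma)$, as required.

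To finish, given any $\sigma\in\Aut(\Gamma)$, inner ultrahomogeneity applied to the finite partial automorphism $\sigma|_A$ produces some $g\in\Gamma$ whose conjugation action agrees with $\sigma$ on $A$; then $\sigma$ composed with the inverse inner automorphism lies in $\Aut(\Gamma/A)=\{\id\}$, so $\sigma$ itself is inner, and therefore $\Aut(\Gamma)=\Inn(\Gamma)\cong\Gamma/Z(\Gamma)$. There is no real obstacle: the uniform bound on abelian subgroups from Theorem~\ref{thm:char_of_finite_exp} lets us replace what could otherwise be a potentially non-terminating chain of centraliser refinements by this single finite-step construction.
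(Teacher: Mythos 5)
Your proof is correct and follows essentially the same route as the paper's: use Theorem~\ref{thm:char_of_finite_exp} to get a finite maximal abelian subgroup $A_0=C(A_0)$, adjoin finitely many witnesses of non-centrality to force $C(A)=Z(\Gamma)$, and conclude via Remark~\ref{rem:centr_orbit}. The only difference is that you spell out the final deduction (that $\Aut(\Gamma/A)=\{\id\}$ and that every automorphism is inner) in more detail than the paper, which simply cites the remark.
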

    \begin{proof}
        Let $\Gamma$ be inner ultrahomogeneous of finite exponent. By Theorem~\ref{thm:char_of_finite_exp}, there is an upper bound on the order of abelian subgroups of $\Gamma$. So there is a finite maximal abelian subgroup $A_0\leq \Gamma$, thus satisfying  $A_0=C(A_0)$. Thus, for each element of $A_0\setminus Z(\Gamma)$, we can find an element of $\Gamma$ which does not commute with it. If we take for $A$ the group generated by $A_0$ and all these witnesses, then we will have $C(A)=Z(\Gamma)$. The conclusion follows by Remark~\ref{rem:centr_orbit}.
    \end{proof}

    \begin{remark}
        \label{rem:fin_exp_trivial_centre}
        Actually, as we will see later in Corollary~\ref{cor:torsion_trivial_centre}, a torsion inner ultrahomogeneous group with more than $2$ elements has trivial centre, and in this case, the conclusion of Proposition~\ref{prop:fin_exp_almost_fg} says that $\Aut(\Gamma)\cong \Gamma$.
    \end{remark}

    \subsection{Torsion-free subgroups}
    Hall's universal group shows that an inner ultrahomogeneous group can be infinite and remain torsion. In this section, we will see that as soon as $\Gamma$ has an element of infinite orders, it has all countable abelian torsion-free groups and all countable non-abelian free groups as subgroups.

    \begin{proposition}
        \label{prop:non-torsion_gives_Zn}
        Suppose $\Gamma$ is inner ultrahomogeneous and it is not torsion. Then it has a subgroup isomorphic to $\bZ^{\oplus\bN}$
    \end{proposition}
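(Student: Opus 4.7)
The plan is to invoke Lemma~\ref{lem:commuting_automorphisms_with_fixed_points} with $A = \langle a_0 \rangle \cong \bZ$, where $a_0 \in \Gamma$ is any element of infinite order (one exists since $\Gamma$ is not torsion). For each prime $p$, let $\sigma_p \in \Emb(A)$ be the self-embedding $a_0^k \mapsto a_0^{pk}$. The $\sigma_p$ commute pairwise, since $\sigma_p \circ \sigma_q = \sigma_{pq}$, and $\sigma_2$ has only the identity of $A$ as a fixed point, while every $\sigma_p$ fixes the identity trivially. The lemma therefore yields commuting elements $g_p \in \Gamma$, one for each prime $p$, with $a_0^{g_p} = a_0^p$.

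It then remains to show that $N \coloneqq \langle g_p : p \text{ prime}\rangle$ is free abelian of countably infinite rank, hence isomorphic to $\bZ^{\oplus \bN}$. Suppose $\prod_{i=1}^k g_{p_i}^{n_i} = 1$ for distinct primes $p_1,\ldots,p_k$ and integers $n_i$ not all zero. Since $N$ is abelian, one may rearrange this as $u = v$, where $u \coloneqq \prod_{n_i > 0} g_{p_i}^{n_i}$ and $v \coloneqq \prod_{n_i < 0} g_{p_i}^{-n_i}$ both involve only non-negative exponents. Positive powers of the commuting $g_{p_i}$ act on $a_0$ by multiplication of the exponent, so $a_0^u = a_0^{M}$ and $a_0^v = a_0^{M'}$ with $M = \prod_{n_i > 0} p_i^{n_i}$ and $M' = \prod_{n_i < 0} p_i^{-n_i}$. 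From $u = v$ and the fact that $a_0$ has infinite order one deduces $M = M'$; but the sets of primes involved in $M$ and $M'$ are disjoint, so unique factorisation forces both products to equal $1$, which is only possible if all $n_i$ vanish, a contradiction.

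The subtlety that the plan has to navigate is that each $\sigma_p$ is a self-embedding but not an automorphism of $A$, so the family $\{g_p\}$ does not induce a genuine homomorphism $N \to \Aut(A)$: the element $a_0^{g_p^{-1}}$ is merely some $p$-th root of $a_0$ in $\Gamma$ and need not lie in $\langle a_0 \rangle$. This is why the cleanest independence argument passes through the positive/negative rearrangement above rather than via the conjugation representation on $A$.
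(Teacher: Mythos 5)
Your proposal is correct and follows exactly the paper's route: both apply Lemma~\ref{lem:commuting_automorphisms_with_fixed_points} to the commuting self-embeddings $a_0\mapsto a_0^p$ of $\langle a_0\rangle$ for distinct primes $p$ and then check that the resulting commuting witnesses generate a free abelian group. The only difference is that you spell out the final freeness verification (via the positive/negative rearrangement and unique factorisation), which the paper leaves as ``easy to see''; your handling of the subtlety that the $\sigma_p$ are not surjective is sound.
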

    \begin{proof}
        Let $g\in \Gamma$ be of infinite order. Fix any $n$, and let $(p_n)_n$ be a sequence of distinct primes.

        Then for each $j$, we have a $\sigma_j\in\Emb(\langle g\rangle)$ given by $\sigma_j(g)=g^{p_j}$. Then $\sigma_1,\ldots,\sigma_n$ commute and they have no fixed points other than the identity. Thus, Lemma~\ref{lem:commuting_automorphisms_with_fixed_points} yields a set $\{g_n\mid n\in\bN\}$ of commuting elements such that $g^{g_n}=g^{p_n}$ for each $n$. It is easy to see that this implies that they freely generate an abelian subgroup of $\Gamma$.
    \end{proof}

    \begin{corollary}
        \label{cor:all_non_torsion}
        If $\Gamma$ is inner ultrahomogeneous and not torsion, then it has a subgroup isomorphic to $\bQ^{\oplus \bN}$ (a countably infinite direct sum of copies of $\bQ$). In particular, it embeds every countable torsion-free abelian group.
    \end{corollary}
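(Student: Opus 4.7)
The plan is to deduce this from Proposition~\ref{prop:non-torsion_gives_Zn} by the standard one-way Fraïssé universality argument, using that ultrahomogeneity of $\Gamma$ is immediate from inner ultrahomogeneity. I would aim directly for the \emph{in particular} clause; since $\bQ^{\oplus\bN}$ is one specific countable torsion-free abelian group, the primary statement will then follow as a special case.

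The key observation is that $\Age(\Gamma)$ contains every finitely generated free abelian group: by Proposition~\ref{prop:non-torsion_gives_Zn}, $\bZ^{\oplus\bN}\leq\Gamma$, and every such group is a subgroup thereof. Combined with the fundamental theorem on finitely generated abelian groups (every finitely generated torsion-free abelian group is free abelian), this implies that for any countable torsion-free abelian group $A$, every finitely generated subgroup of $A$ lies in $\Age(\Gamma)$.

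Now fix such $A$ and enumerate $A=\{a_0,a_1,\ldots\}$. I would build a chain of embeddings $f_k\colon\langle a_0,\ldots,a_{k-1}\rangle\to\Gamma$ by recursion with $f_{k+1}\supseteq f_k$. At step $k$, pick any embedding $\psi\colon\langle a_0,\ldots,a_k\rangle\to\Gamma$ (which exists by the previous paragraph); the correspondence $\psi(a_i)\mapsto f_k(a_i)$ for $i<k$ is then a finite partial automorphism of $\Gamma$, so by ultrahomogeneity it extends to some $\alpha\in\Aut(\Gamma)$, and $f_{k+1}\coloneqq\alpha\circ\psi$ extends $f_k$ as required. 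Then $\bigcup_k f_k\colon A\to\Gamma$ is the desired embedding.

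There is no real obstacle here: the argument is the standard Fraïssé-style proof that an ultrahomogeneous structure embeds every countable structure of compatible age, and $\Gamma$ is ultrahomogeneous automatically since inner automorphisms are automorphisms. The minor point that $\Gamma$ need not be countable is irrelevant, as only finitely many elements of $\Gamma$ are ever touched at each inductive stage.
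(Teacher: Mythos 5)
Your proof is correct, but it runs in the opposite direction to the paper's. The paper first constructs a copy of $\bQ^{\oplus\bN}$ explicitly: starting from the subgroups $G_n\cong\bZ^n$ provided by Proposition~\ref{prop:non-torsion_gives_Zn}, it uses ultrahomogeneity to build a chain $G_0\leq G_0'\leq\cdots$ in which each group becomes $n!$-divisible inside the next, so that the union is $\bQ^{\oplus\bN}$; the universality clause is then quoted as the classical fact that every countable torsion-free abelian group embeds into $\bQ^{\oplus\bN}$. You instead prove the universality clause first, by the standard one-way Fra\"iss\'e argument: Proposition~\ref{prop:non-torsion_gives_Zn} puts every finitely generated free abelian group --- hence, by the structure theorem, every finitely generated torsion-free abelian group --- into $\Age(\Gamma)$, and the usual extension-property induction (which, as you correctly note, needs no countability of $\Gamma$, since each step only touches finitely generated subgroups) embeds any countable group whose age is contained in $\Age(\Gamma)$. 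The copy of $\bQ^{\oplus\bN}$ then comes for free as a special case. Both routes rest on the same two inputs, namely Proposition~\ref{prop:non-torsion_gives_Zn} and ultrahomogeneity; yours trades the explicit divisible chain and the classical embedding-into-$\bQ^{\oplus\bN}$ fact for the generic universality argument plus the fundamental theorem of finitely generated abelian groups, which makes it marginally more self-contained, while the paper's version has the minor virtue of exhibiting the copy of $\bQ^{\oplus\bN}$ concretely as a union of an explicit chain.
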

    \begin{proof}
        By the preceding proposition, for each $n$, $\Gamma$ has a subgroup $G_n$ isomorphic to $\bZ^n$.

        Now, for each $n$, we have an embedding of $G_n$ into $n!G_{n+1}$, which extends to an automorphism of $\Gamma$ (by ultrahomogeneity). By reversing it, we obtain a $G_{n}'\geq G_n$ such that $G_n'\cong \bZ^{n+1}$ and $G_n\subseteq n! G_n'$. Going all the way back to $G_0$ we obtain a direct system of subgroups of $\Gamma$ whose union is isomorphic to $\bigcup_{n\in\bN} \frac{1}{n!}\bZ^n=\bQ^{\oplus \bN}$.

        The ``in particular'' follows from the fact that every countable abelian torsion-free group embeds into $\bQ^{\oplus\bN}$.
    \end{proof}

    \begin{corollary}
        \label{cor:torsion_characterisation}
        Let $\Gamma$ be an inner ultrahomogeneous group. Then the following are equivalent:
        \begin{itemize}
            \item
            $\Gamma$ is not torsion,
            \item
            every countable torsion-free abelian group embeds into $\Gamma$,
            \item
            every countable (non-abelian) free group embeds into $\Gamma$.
        \end{itemize}
    \end{corollary}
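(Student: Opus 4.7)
The plan is to handle the three nontrivial implications. The implications from the second or third bullet to the first are immediate: each asserts in particular that $\bZ$ embeds into $\Gamma$, forcing an element of infinite order. The first bullet implies the second by Corollary~\ref{cor:all_non_torsion}. It therefore remains to show that a non-torsion $\Gamma$ embeds every countable free group, and for this it suffices to embed $F_2$, since every countable free group embeds into the countable-rank free group $F_\omega$, and $F_\omega$ in turn embeds into $F_2$ as the commutator subgroup $[F_2, F_2]$ by Nielsen--Schreier.

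To embed $F_2$, I would use Proposition~\ref{prop:non-torsion_gives_Zn} to obtain a subgroup $A \leq \Gamma$ with $A \cong \bZ^2$. The automorphism group $\Aut(A) = GL_2(\bZ)$ contains a free subgroup of rank $2$: a standard ping-pong argument (Sanov) shows that $\left(\begin{smallmatrix}1&2\\0&1\end{smallmatrix}\right)$ and $\left(\begin{smallmatrix}1&0\\2&1\end{smallmatrix}\right)$ freely generate a copy of $F_2$ in $SL_2(\bZ)$. Fix such a pair $\sigma_1, \sigma_2 \in \Aut(A)$. Each $\sigma_i$ is a partial automorphism of the finitely generated subgroup $A$, so inner ultrahomogeneity yields $h_1, h_2 \in \Gamma$ with $a^{h_i} = \sigma_i(a)$ for all $a \in A$. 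Then $h_1, h_2 \in N(A)$, and the conjugation homomorphism $N(A) \to \Aut(A)$ sends $\langle h_1, h_2 \rangle$ onto $\langle \sigma_1, \sigma_2 \rangle \cong F_2$. Since surjections onto free groups split (free groups being projective in the category of groups), this surjection admits a section; choosing the section to send each free generator to the corresponding $h_i$ exhibits $F_2$ as a subgroup of $\langle h_1, h_2 \rangle \leq \Gamma$, and in fact shows that $h_1, h_2$ themselves freely generate $F_2$.

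I do not anticipate a serious obstacle here. The only non-trivial input beyond previously proved results is the existence of a free subgroup of rank $2$ inside $GL_2(\bZ)$, which is classical. The main idea is simply to notice that the lifting of automorphisms of a finitely generated subgroup via inner ultrahomogeneity is automatically compatible with the conjugation homomorphism $N(A) \to \Aut(A)$, so that free relations upstairs are inherited from free relations downstairs by projectivity.
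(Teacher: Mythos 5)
Your proof is correct and follows essentially the same route as the paper: reduce to embedding $F_2$, take a copy of $\bZ^2$ from Proposition~\ref{prop:non-torsion_gives_Zn}, lift a free subgroup of $\GL_2(\bZ)$ to $\Gamma$ via inner ultrahomogeneity, and observe that the lifts freely generate. The paper leaves the last step as ``easy to check''; your projectivity/section argument is exactly the intended justification.
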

    \begin{proof}
        The equivalence of the first two bullets is the preceding corollary. The third bullet clearly implies the first. For the converse, it is enough to consider the free group of rank $2$ (since all countable free groups embed into it).

        By the second bullet, $\Gamma$ has a subgroup $G$ isomorphic to $\bZ^2$. It is well-known that $\Aut(\bZ^2)\cong\GL_2(\bZ)$ has a free subgroup of rank $2$. Let $\sigma_1,\sigma_2\in \Aut(G)$ freely generate a subgroup. By inner ultrahomogeneity, there are $g_1,g_2\in \Gamma$ which act on $G$ as $\sigma_1$ and $\sigma_2$, and it is easy to check that they generate a free subgroup of $\Gamma$.
    \end{proof}

    \begin{corollary}
        \label{cor:trichotomy}
        If $\Gamma$ is inner ultrahomogeneous, then at least one of the following holds:
        \begin{itemize}
            \item
            $\Gamma$ is of finite exponent (and all of its abelian subgroups have order smaller than $2^{100}$),
            \item
            $\Gamma$ has a subgroup isomorphic to $(\bQ/\bZ)^{\oplus \bN}$ (and every countable torsion abelian group),
            \item
            $\Gamma$ has a subgroup isomorphic to $\bQ^{\oplus\bN}$ (and every countable torsion-free abelian group, as well as every countable free group).
        \end{itemize}
    \end{corollary}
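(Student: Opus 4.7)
The approach is to split into cases according to whether $\Gamma$ is torsion and, in the torsion case, whether it has finite exponent. The three cases will correspond directly to the three bullet points, using the previously proved characterisations.

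First I would dispose of the non-torsion case: if $\Gamma$ is not torsion, then Corollary~\ref{cor:all_non_torsion} gives a copy of $\bQ^{\oplus\bN}$ in $\Gamma$, which contains every countable torsion-free abelian group, and Corollary~\ref{cor:torsion_characterisation} gives every countable free group. This yields the third bullet.

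Next, if $\Gamma$ is torsion and of finite exponent, then Theorem~\ref{thm:char_of_finite_exp} (in particular the implication from (2) back to (1), which is part of the chain of equivalences) gives the bound $2^{100}$ on all abelian subgroups, so the first bullet holds.

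The only remaining case is $\Gamma$ torsion and not of finite exponent. Here I would apply the contrapositive of Theorem~\ref{thm:char_of_finite_exp}: since (2) fails, (3) fails as well, so every finite abelian group embeds into $\Gamma$. To upgrade this to an embedding of $(\bQ/\bZ)^{\oplus\bN}$, I would use a standard Fraïssé-style back-and-forth: write a countable torsion abelian group $A$ as an ascending union $A=\bigcup_n A_n$ of finite subgroups, and build embeddings $\varphi_n\colon A_n\hookrightarrow\Gamma$ inductively. Given $\varphi_n$, I embed $A_{n+1}$ into $\Gamma$ in some way $\psi$ (possible since every finite abelian group embeds), and then $\psi\restr A_n$ and $\varphi_n$ are two finite partial automorphisms of $\Gamma$ with the same domain image; by ultrahomogeneity there is $\tau\in\Aut(\Gamma)$ taking one to the other, so $\tau\circ\psi$ extends $\varphi_n$. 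Since every finitely generated subgroup of a countable torsion abelian group is finite, this argument applies to $(\bQ/\bZ)^{\oplus\bN}$ and to any countable torsion abelian group, giving the second bullet.

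No single step looks like a serious obstacle: the hard analytic work was already done in Theorem~\ref{thm:char_of_finite_exp} and Corollary~\ref{cor:all_non_torsion}. The most delicate point is simply noting that the standard Fraïssé back-and-forth needs only ultrahomogeneity (not inner ultrahomogeneity) plus the fact that finitely generated torsion abelian groups are finite, so that every finite piece of the target group is known to lie in $\Age(\Gamma)$.
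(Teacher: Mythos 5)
Your proposal is correct and follows essentially the same route as the paper: non-torsion is handled by Corollary~\ref{cor:torsion_characterisation}, and in the torsion case Theorem~\ref{thm:char_of_finite_exp} gives either the first bullet or that every finite abelian group embeds, after which ultrahomogeneity yields the second bullet. The only difference is that you spell out the standard back-and-forth (union of finite subgroups plus ultrahomogeneity) that the paper compresses into the phrase ``we can use ultrahomogeneity to conclude,'' which is exactly the intended argument.
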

    \begin{proof}
        If $\Gamma$ is not torsion, then the conclusion is a part of Corollary~\ref{cor:torsion_characterisation}. Otherwise, it is torsion, so by Theorem~\ref{thm:char_of_finite_exp} either the first bullet holds or else every finite abelian group embeds into $\Gamma$, in which case we can use ultrahomogeneity to conclude that the second bullet holds.
    \end{proof}

    \subsection{Centre}
    \begin{proposition}
        \label{prop:center_at_most_2}
        If $\Gamma$ is inner (1-)ultrahomogeneous, then it its centre is trivial or it is cyclic of order $2$, and in the latter case, its nonidentity element is the unique element of order $2$.
    \end{proposition}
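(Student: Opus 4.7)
The plan is to exploit a small but decisive observation: for a single element of a group, the quantifier-free type reduces to its order, so inner $1$-ultrahomogeneity says that any two elements of the same order are conjugate in $\Gamma$. In particular, given any $z\in\Gamma$, there is $g\in\Gamma$ with $z^g=z^{-1}$ (since $z$ and $z^{-1}$ always share their order). This one fact drives everything.

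First I would show that every element of $Z(\Gamma)$ has order at most $2$. Fix $z\in Z(\Gamma)\setminus\{1\}$. Because $z$ and $z^{-1}$ have the same order, inner $1$-ultrahomogeneity produces $g\in\Gamma$ with $z^g=z^{-1}$. But $z$ is central, so $z^g=z$, forcing $z=z^{-1}$, i.e.\ $z^2=1$. Thus $Z(\Gamma)$ is an elementary abelian $2$-group.

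Next I would argue that in fact $Z(\Gamma)$ is either trivial or has exactly two elements, and that in the latter case its nonidentity element is the \emph{only} involution of $\Gamma$. Suppose $z\in Z(\Gamma)\setminus\{1\}$ (so $z$ has order $2$ by the previous step) and let $a\in\Gamma$ be any element of order $2$. By inner $1$-ultrahomogeneity applied to the partial map $z\mapsto a$, there is $g\in\Gamma$ with $z^g=a$; centrality of $z$ forces $z^g=z$, so $a=z$. Hence $z$ is the unique element of order $2$, and since $Z(\Gamma)$ is contained in the set of elements of order at most $2$, we get $Z(\Gamma)=\{1,z\}$.

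There is essentially no obstacle here beyond making the initial observation precise: that for a single variable $x$, every quantifier-free formula over the empty set is a Boolean combination of equations $x^n=1$, so inner $1$-ultrahomogeneity really does give conjugacy between any two equally-ordered elements. Once that is spelled out, both assertions follow from the same one-line computation using that conjugation fixes central elements.
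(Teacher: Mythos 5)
Your proof is correct and follows essentially the same route as the paper: the paper invokes its Remark~\ref{rem:conj_order} (elements of the same order are conjugate) to conclude that a central element is the unique element of its order, forcing $g=g^{-1}$ and hence order at most $2$, exactly as you do. The only difference is that you rederive that remark inline rather than citing it.
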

    \begin{proof}
        By Remark~\ref{rem:conj_order}, if $g$ is central, then no other element of $\Gamma$ has the same order. This is only possible if the order of $g$ is $2$ or $1$ (otherwise, $g^{-1}\neq g$ has the same order).
    \end{proof}

    \begin{corollary}
        \label{cor:torsion_trivial_centre}
        If $\Gamma$ is torsion with more than $2$ elements, then $Z(\Gamma)$ is trivial.
    \end{corollary}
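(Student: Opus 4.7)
The plan is to argue by contradiction. Assume $Z(\Gamma)\neq\{1\}$; by Proposition~\ref{prop:center_at_most_2} we then have $Z(\Gamma)=\{1,z\}$ and $z$ is the unique element of order $2$ in $\Gamma$. The entire strategy is to use $|\Gamma|>2$ together with torsion to exhibit a second element of order $2$, contradicting uniqueness.

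The straightforward case is when $\Gamma$ contains an element $g$ of some odd prime order $p$. Take $A=\langle g\rangle$ and let $B=\{\id,\iota\}\leq\Aut(A)$ be generated by inversion; the orders $p$ and $2$ are coprime, so Corollary~\ref{cor:abelian_automorphism_group_semidirect} produces some $c\in N(A)$ of order $2$ with $g^c=g^{-1}$. Centrality of $z$ forces $g^z=g\neq g^{-1}$, so $c\neq z$ is the desired second involution.

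Otherwise every nonidentity element of $\Gamma$ has $2$-power order, i.e.\ $\Gamma$ is a $2$-group. Since $(\bQ/\bZ)^{\oplus\bN}$ has elements of every prime order, it does not embed into a $2$-group, so Corollary~\ref{cor:trichotomy} forces $\Gamma$ to have finite exponent. The exponent cannot be $2$, since an elementary abelian $2$-group with more than two elements contains several involutions, and it cannot be at least $8$ either: by Lemma~\ref{lem:cyclic_2_group}, a cyclic subgroup of order $2^k$ with $k\geq 3$ gives an embedded $(\bZ/2\bZ)^{\lceil k/2\rceil}$ with multiple involutions. Thus the exponent equals exactly $4$.

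The remaining subcase is where I expect the main obstacle to lie. Pick $g$ of order $4$, so $g^2=z$, and by Remark~\ref{rem:conj_order} find $c\in\Gamma$ with $g^c=g^{-1}$. A short computation using $cg=g^{-1}c$ yields $(gc)^2=c^2$, and since $c^4=1$ the order of $c$ is $1$, $2$, or $4$. Order $1$ contradicts $g^c=g^{-1}$; order $2$ makes $gc$ an involution with $gc\neq z$ (else $c=g$), giving a second involution. In the final case $c$ has order $4$, hence $c^2=z$, and the defining relations identify $\langle g,c\rangle\cong Q_8$. Here I would apply Corollary~\ref{cor:abelian_automorphism_group_semidirect} to $A=Q_8$ with $B\leq\Aut(Q_8)$ the cyclic subgroup of order $3$ cycling the generators $i,j,k$; as $\gcd(3,8)=1$, this yields an element of order $3$ in $\Gamma$, contradicting that $\Gamma$ is a $2$-group.
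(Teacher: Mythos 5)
Your case split and the $2$-group branch are fine --- indeed your final contradiction there (realising the order-$3$ automorphism of $Q_8$ by an element whose order is coprime to $8$, hence producing an element of order $3$ inside a $2$-group) is a legitimate alternative to the paper's ending, which instead realises an order-$4$ automorphism of $Q_8$ and contradicts the absence of elements of order $8$. The genuine gap is in the odd-prime-order case. Corollary~\ref{cor:abelian_automorphism_group_semidirect} does \emph{not} produce an element $c$ of order $2$ inverting $g$: it produces a finite abelian $C\leq N(\langle g\rangle)$ with $\lvert C\rvert$ coprime to $p$ which merely \emph{surjects} onto $B\cong\bZ/2\bZ$, and every element of $C$ mapping to the inversion may have order $4$ with square equal to the central involution $z$ (e.g.\ $C\cong\bZ/4\bZ$). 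Nothing rules out $\langle g,c\rangle$ being dicyclic of order $4p$, which has a unique involution, so no second involution and no contradiction is obtained. The point is that the coprimality clause of the corollary only controls the $p$-part of $\lvert C\rvert$, not its $2$-part, and any involution you do find in $C$ must be $z$ itself, which is central and hence lies in the kernel of $C\to\Aut(\langle g\rangle)$.

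If you repair this branch the natural way (take $c$ mapping to inversion, pass to its $2$-part, and note its order cannot be $1$, $2$, or at least $8$ --- the last by Lemma~\ref{lem:cyclic_2_group} --- hence equals $4$), you land in exactly the $Q_8$ configuration again, but now \emph{without} the hypothesis that $\Gamma$ is a $2$-group, so your order-$3$ contradiction is unavailable. At that point you need something like the paper's move: $Q_8$ admits an automorphism of order $4$; a witness $\gamma$ of $2$-power order has $\gamma^2$ non-central, so $\gamma$ has order at least $8$, which is impossible. That argument works uniformly and makes the case split on the existence of odd-order elements unnecessary.
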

    \begin{proof}
        Suppose towards contradiction $\Gamma$ is torsion and has a nontrivial centre. Then it has exactly one element of order $2$, so by Lemma~\ref{lem:cyclic_2_group}, it has no elements of order $2^3=8$.

        Note that if $g\in\Gamma$ is not of order $2$ or $1$, then there is some $h\in \Gamma$ such that $g^h=g^{-1}\neq g$. Since $\Gamma$ is torsion, there is an odd power $h'$ of $h$ whose order is a power of $2$, and it is not hard to see that $g^{h'}=g^{-1}$. Since the order of $h'$ cannot be $2$ (because the unique element of order $2$ is central) or $8$ (because there are no elements of order $8$), every non-central element of $\Gamma$ is inverted by an element of order $4$.

        Since $\Gamma$ has more than $2$ elements, it is not equal to its centre, so it follows that there is an element of order $4$.

        In particular, there are two elements $g,h$ of order $4$ such that $g^h=g^{-1}$ and $g^2=h^2$. It follows that $G=\langle g,h\rangle$ is a group of order $8$ which is not cyclic and has a unique element of order $2$. It follows (from classification of groups of order $8$) that $G\cong Q_8$. But $Q_8$ has an automorphism of order $4$. By inner ultrahomogeneity, this is realised by conjugation by some $\gamma\in \Gamma$. We may assume without loss of generality that the order of $\gamma$ is a power of $2$ (by replacing it by $\gamma^n$ for an appropriate $n\equiv 1\pmod 4$), and clearly, $\gamma^2$ is not central. It follows that the order of $\gamma$ is at least $8$, a contradiction.
    \end{proof}

    Note that the torsion hypothesis is essential: Example~\ref{ex:nontrivial_centre} shows that a non-torsion inner ultrahomogeneous group can have a nontrivial centre.

    \subsection{Divisibility}

    \begin{proposition}
        \label{prop:char_divisibility}
        If $G$ is an ultrahomogeneous group, $n$ is a positive integer and $g$ has order $m$, then $g$ is an $n$-th power if and only if $G$ has an element of order $nm$ (where $n\infty=\infty$).
    \end{proposition}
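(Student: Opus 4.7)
The plan is to prove the two directions separately, both times using ultrahomogeneity to extend a suitable partial isomorphism between cyclic subgroups. For the ($\Leftarrow$) direction, suppose $G$ has an element $c$ of order $nm$. Then $c^n$ has order $nm/\gcd(nm,n) = m$ (with the infinite case covered by $n\infty = \infty$), matching the order of $g$. Hence $\langle c^n\rangle$ and $\langle g\rangle$ are isomorphic cyclic subgroups of order $m$, so the assignment $c^n \mapsto g$ is a finite partial isomorphism of $G$. By ultrahomogeneity it extends to some $\phi\in\Aut(G)$, and then $g = \phi(c^n) = \phi(c)^n$ exhibits $g$ as an $n$-th power.

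For the ($\Rightarrow$) direction, suppose $g = h^n$ for some $h\in G$. If $g$ has infinite order, then $h$ also has infinite order (otherwise $h^n$ would be of finite order), and since $n\infty = \infty$, $h$ itself is the required element. If $g$ has finite order $m$, let $k$ be the order of $h$; then $m \mid k \mid nm$, say $k = md$ with $d\mid n$, and the identity $k/\gcd(k,n) = m$ forces $\gcd(m, n/d) = 1$. In the easy case $d = n$ we have $k = nm$, so $h$ itself suffices.

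The main obstacle is the remaining subcase $d < n$, where $h$ has order strictly smaller than $nm$ and must be promoted to a genuine element of order $nm$. My plan here is to exploit the amalgamation property of $\Age(G)$ (inherited from ultrahomogeneity): the cyclic subgroup $\langle h\rangle \cong C_{md}$ embeds canonically into $C_{nm}$ by sending a generator to $n/d$ times a generator of $C_{nm}$, and under this embedding $g = h^n$ maps to the correct element of order $m$. Amalgamating this cyclic extension with the inclusion $\langle h\rangle \leq G$ and then invoking ultrahomogeneity to realize the amalgam inside $G$ should produce a copy of $C_{nm}$ containing $\langle h\rangle$, yielding the required element. The technical crux is verifying that $C_{nm}\in\Age(G)$, so that the amalgamation takes place in the correct class and can be concretely realized inside $G$.
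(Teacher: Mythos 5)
Your ($\Leftarrow$) argument is exactly the paper's proof: the paper takes $h$ of order $nm$, notes $h^n$ has order $m$, conjugates $h^n$ to $g$ by an automorphism (ultrahomogeneity), and reads off $g$ as an $n$-th power of the image of $h$. That half is fine.

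The problem is the ($\Rightarrow$) direction, and specifically the subcase $d<n$ that you correctly isolate as the remaining obstacle. Your number theory up to that point is right ($k=md$ with $d\mid n$ and $\gcd(m,n/d)=1$), but the step you defer --- ``verifying that $C_{nm}\in\Age(G)$'' --- is not a technical detail to be checked: for a cyclic group, $C_{nm}\in\Age(G)$ is \emph{equivalent} to $G$ having an element of order $nm$, which is precisely the conclusion you are trying to prove. So the amalgamation plan is circular. Worse, this direction of the proposition is actually false as stated: take $G=S_3$ (one of the paper's own finite inner ultrahomogeneous groups, hence ultrahomogeneous), $n=2$ and $g$ a $3$-cycle, so $m=3$. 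Then $g=(g^2)^2$ is a square, yet $S_3$ has no element of order $6$. Here $k=3$, $d=1<n$, $\gcd(3,2)=1$ --- exactly your problematic subcase. The same failure occurs in $\bZ/3\bZ$. Note that the paper's own proof silently addresses only the ($\Leftarrow$) implication (the one actually used for the divisibility remark that follows), so your instinct that something nontrivial is hiding in the other direction was sound; it just cannot be repaired, because that implication does not hold for general ultrahomogeneous $G$. A correct write-up should either prove only the ($\Leftarrow$) direction, or add a hypothesis (e.g.\ that every $h$ with $h^n=g$ can be promoted, which is what fails) under which the converse holds.
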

    \begin{proof}
        Let $h$ be an element of order $nm$. Then $h^n$ has order $m$, so it is conjugate to $g$ via an automorphism. If $k$ is the image of $h$ via this automorphism, then $k^n=g$.
    \end{proof}

    Observe that, as noted in \hyperref[mainthm]{Main Theorem}, Proposition~\ref{prop:char_divisibility} easily implies that if $\Gamma$ is torsion free, then it is divisible, and otherwise, it is divisible if and only if it has elements of all finite orders.

    \subsection{Normal subgroups}
    In this section, we will give some sufficient conditions for (uniform) simplicity of an inner ultrahomogeneous groups, and also discuss some particular normal subgroups other than the centre.

    \begin{remark}
        If $\Gamma$ is inner ultrahomogeneous, then all normal subgroups of $\Gamma$ are characteristic. In particular, $\Gamma$ is simple if and only if it is characteristically simple.
    \end{remark}

    Per Remark~\ref{rem:conj_order}, for any $n\in \bN\cup\{\infty\}$, the elements of order $n$ form a single conjugacy class, and so they generate normal subgroup. Within this section, we will call them $\Gamma_{n}$.

    Furthermore, for each prime $p$, we will write $\Gamma_{p^\infty}$ for the union $\bigcup_n \Gamma_{p^n}$. (Note that this is a normal subgroup, for if $\Gamma_{p^{n+1}}$ is nontrivial, then it contains $\Gamma_{p^n}$.) Finally, let us write $\Gamma_{<\infty}$ for the subgroup generated by all elements of finite order.

    \begin{remark}
        \label{rem:simpicity_criterion}
        An inner ultrahomogeneous $\Gamma$ is simple if and only if for each $p$ prime or $\infty$, the group $\Gamma_p$ is trivial or equal to $\Gamma$.
    \end{remark}

    \begin{remark}
        Note that we always have $\Gamma=\Gamma_\infty$ or $\Gamma=\Gamma_{<\infty}$ (if $\Gamma\neq \Gamma_{<\infty}$, then $\Gamma_\infty$ contains its complement, and the complement of any proper subgroup always generates the whole group).
    \end{remark}

    \begin{remark}
        If $\Age(\Gamma)$ is closed under $\times\bZ$, then $\Gamma_\infty=\Gamma$ (because every element is a product of two elements of infinite order).
    \end{remark}

    \begin{proposition}
        \label{prop:free_prod_simple}
        If $\Age(\Gamma)$ is closed under $*\bZ$ (in particular, by Proposition~\ref{prop:implications_between_props}, if it is closed under finitary HNN-extensions), and $\Gamma$ is inner ultrahomogeneous, then it is uniformly simple.
    \end{proposition}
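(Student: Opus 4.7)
The plan is to exhibit a uniform $N$ (I will aim for $N=4$) such that for every non-identity $g\in\Gamma$ and every $h\in\Gamma$, $h$ is a product of at most $N$ conjugates of $g^{\pm1}$. The two key inputs I would use are Remark~\ref{rem:conj_order} (two elements of $\Gamma$ are conjugate iff they have the same order) and Proposition~\ref{prop:implications_between_props}(1), which makes $\Age(\Gamma)$ closed under (binary) free products.

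First I would decompose $h$ as a product of two infinite-order elements, $h=h_1h_2$. Since $\langle h\rangle*\bZ\in\Age(\Gamma)$, ultrahomogeneity (equivalently, the Fraïssé extension property) lets me extend the inclusion $\langle h\rangle\hookrightarrow\Gamma$ to an embedding of $\langle h\rangle*\bZ$ into $\Gamma$. Taking $h_2$ to be the image of the $\bZ$-generator and setting $h_1\coloneqq hh_2^{-1}$, both $h_2$ and $h_1$ have infinite order, since inside the free product their cyclically reduced forms have lengths $1$ and $2$ and neither lies in a single factor.

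Next I would show each $h_i$ is a product of at most two conjugates of $g^{\pm1}$. When $g$ has infinite order this is immediate: $h_i$ and $g$ are both infinite-order, hence conjugate by Remark~\ref{rem:conj_order}. When $g$ has finite order $m\geq 2$, I would use that $\bZ/m\bZ*\bZ/m\bZ\in\Age(\Gamma)$; inside it the two standard generators $a,b$ have order $m$ and their product $ab$ is cyclically reduced of length $2$, hence of infinite order. Embedding this free product into $\Gamma$ produces $a_0,b_0\in\Gamma$ of order $m$ with $a_0b_0$ of infinite order, and Remark~\ref{rem:conj_order} supplies a $\gamma\in\Gamma$ with $(a_0b_0)^\gamma=h_i$; then $a_0^\gamma,b_0^\gamma$ are order-$m$ elements whose product is $h_i$, and Remark~\ref{rem:conj_order} makes each of them a conjugate of $g$.

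Combining the two steps writes $h$ as a product of at most $4$ conjugates of $g^{\pm1}$, uniformly in $g$ and $h$, which is the desired uniform simplicity. I don't expect any real obstacle beyond this, since each step just marries the conjugacy-from-same-order principle to the availability of an appropriate free product in $\Age(\Gamma)$; the most delicate bookkeeping is the conjugation-by-$\gamma$ manoeuvre in the finite-order-$g$ case, where one must pass from some embedding of $\bZ/m\bZ*\bZ/m\bZ$ provided abstractly by $\Age(\Gamma)$ to one whose product-of-generators matches the target $h_i$, and the only nonobvious group-theoretic input is the infinite-order claim for $ab$ in $\bZ/m\bZ*\bZ/m\bZ$, which is standard.
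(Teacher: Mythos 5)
Your proof is correct and follows essentially the same route as the paper's: both decompose an arbitrary element as a product of two infinite-order elements using closure under $*\bZ$, and then write an infinite-order element as a product of two conjugates of the given nonidentity $g$ by exhibiting, inside a suitable free product in $\Age(\Gamma)$, two same-order elements whose product visibly has infinite order (the paper uses $a\cdot a^{g^{-1}}$ inside $\langle a\rangle*\langle g\rangle$ where you use the generators of $\bZ/m\bZ*\bZ/m\bZ$), arriving at the same uniform bound of $4$. The only slip is cosmetic: $h_2$ \emph{does} lie in a single free factor, namely the $\bZ$ one, but of course still has infinite order.
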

    \begin{proof}
        Let $a\in \Gamma$ is nonidentity. By hypothesis, we have a $g\in \Gamma$ of infinite order such that $\langle a,g\rangle$ is the free product of $\langle a\rangle$ and $\langle g\rangle$. Then $ag$ is of infinite order, so it follows that $a=(ag)g^{-1}$ is the product of two elements of infinite order. On the other hand, $gag^{-1}$ is of the same order as $a$, and $a(gag^{-1})$ is of infinite order. It follows,  via Remark~\ref{rem:conj_order}, that every element is a product of at most $4$ conjugates of any other element.
    \end{proof}

    \begin{proposition}
        \label{prop:finite_groups_simple}
        Suppose $\Gamma$ is a torsion inner ultrahomogeneous group into which all finite groups embed. Then $\Gamma$ is uniformly simple.
    \end{proposition}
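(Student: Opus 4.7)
The plan is to prove uniform simplicity with constant $N = 4$ via a two-step reduction: (a) every element of $\Gamma$ is a product of at most two involutions, and (b) every involution is a product of at most two conjugates of any fixed nontrivial element.

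For (a), I will take $a \in \Gamma$ of order $n$ (finite, by torsion). Since every finite group embeds in $\Gamma$ and all elements of a given order are conjugate by Remark~\ref{rem:conj_order}, after conjugation I may assume $a$ sits inside a copy of $S_n \leq \Gamma$ as the $n$-cycle $(1, 2, \ldots, n)$. I then invoke the classical fact that every permutation in $S_n$ is a product of two involutions (via the cycle-reversal decomposition, writing each cycle as a product of two ``reflecting'' involutions and combining them across disjoint cycles).

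For (b), fix an involution $\tau \in \Gamma$ and $b \in \Gamma$ of order $m \geq 2$. The case $m = 2$ is immediate from Remark~\ref{rem:conj_order}. For $m \geq 3$ I will exhibit in some $S_k \leq \Gamma$ an involution $\tau_0 = xy$ with $x, y$ both of order exactly $m$. For $m = 3$, the identity $(1,2,3) \cdot (1,2,4) = (1,3)(2,4)$ inside $A_4 \leq \Gamma$ suffices. For $m \geq 4$, I will work inside $S_m \leq \Gamma$ with $x = (1, 2, \ldots, m)$, $\tau_0 = (1,3)(2,4)$, and $y := x^{-1}\tau_0$, so that $xy = \tau_0$ by construction; tracing the orbit of $1$ under $y$ (which passes through $1 \to 2 \to 3 \to m \to m-1 \to \cdots \to 4 \to 1$) shows $y$ is a single $m$-cycle, hence of order exactly $m$. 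Since $\tau$ is conjugate to $\tau_0$ and each of $x, y$ is conjugate to $b$ by Remark~\ref{rem:conj_order}, transporting this factorisation writes $\tau$ as a product of two conjugates of $b$.

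Combining (a) and (b) gives uniform simplicity with bound $4$. The main obstacle is producing, for arbitrary $m \geq 3$, a finite group inside $\Gamma$ containing an involution that factors as a product of two elements of order exactly $m$; this is resolved by the explicit $S_m$-construction above, with the orbit trace verifying that $y$ really is an $m$-cycle being the only nonobvious step.
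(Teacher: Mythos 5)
Your proof is correct and follows essentially the same route as the paper: first write every element as a product of at most two involutions, then write any involution as a product of two conjugates of the given element by exhibiting an explicit product of two $m$-cycles equal to a product of disjoint transpositions, yielding the same uniform bound of $4$. The only (immaterial) difference is the choice of witness: the paper uses $(1,2,\ldots,n)(n,n+1,n-1,\ldots,2)=(1,2)(n,n+1)$ in $S_{n+1}$ uniformly for $n>2$, whereas you work in $S_m$ with $y=x^{-1}\tau_0$ and handle $m=3$ separately.
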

    \begin{proof}
        Since any permutation is the product of a most two involutions, and for every $n>2$, there are two $n$-cycles in $S_{n+1}$ whose product is a pair of disjoint transpositions: $(1,2,\ldots,n)(n,n+1,n-1,\ldots,2)=(1,2)(n,n+1)$, it is clear from the hypothesis (and Remark~\ref{rem:conj_order}) that for every $n$, the product of at most $4$ elements of order $n$ can have any finite order and $\Gamma$ is uniformly simple.
    \end{proof}

    \begin{proposition}
        \label{prop:2_or_inf_generate}
        If $\Gamma$ is inner ultrahomogeneous, then $\Gamma=\Gamma_\infty\cdot \Gamma_{2^\infty}$. In particular, if $\Gamma$ is torsion, then $\Gamma=\Gamma_{2^\infty}$, and if $\Gamma$ has no elements of order $2$, then $\Gamma=\Gamma_\infty$.
    \end{proposition}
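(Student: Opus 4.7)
The plan is to first reduce to the case where $g$ has finite odd order $m \geq 3$. An element of infinite order is already in $\Gamma_\infty$, and an element of finite order $n = 2^a m$ with $m$ odd decomposes, via Bezout's identity $u \cdot 2^a + v \cdot m = 1$, as a product of two commuting powers of itself: $g^{vm}$ (of $2$-power order dividing $2^a$) and $g^{u \cdot 2^a}$ (of odd order dividing $m$). The first factor lies in $\Gamma_{2^\infty}$, and since $\Gamma_\infty$ and $\Gamma_{2^\infty}$ are both normal, $\Gamma_\infty \cdot \Gamma_{2^\infty}$ is a subgroup, so it suffices to show the odd-order factor lies in it.

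For $h \in \Gamma$ of odd order $m \geq 3$, the map $h \mapsto h^{-1}$ is a finite partial automorphism of $\langle h \rangle$, so by inner ultrahomogeneity there exists $b \in \Gamma$ with $h^b = h^{-1}$, equivalently $hb = bh^{-1}$. A straightforward induction on $k$ then yields the identities
\[ (bh)^{2k} = b^{2k}, \qquad (bh)^{2k+1} = b^{2k+1} h \]
for all $k \geq 0$, so the order of $bh$ is entirely determined by the order of $b$ together with the intersection $\langle b \rangle \cap \langle h \rangle$.

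If $b$ has infinite order, neither $b^{2k} = 1$ nor $b^{2k+1} = h^{-1}$ can hold for any $k$ (the latter would force $b^{2k+1}$, and hence $b$, to have finite order), so $bh$ has infinite order, and $h = b^{-1}(bh) \in \Gamma_\infty$. If $b$ has finite order, then it cannot have odd order, because in that case $b \in \langle b^2 \rangle \subseteq C(h)$ would force $h = h^{-1}$; so write $|b| = 2^c m'$ with $c \geq 1$ and $m'$ odd, and set $\tilde b := b^{m'}$. This preserves the relation $h^{\tilde b} = h^{-1}$ (as $m'$ is odd) and gives $\tilde b$ order $2^c$; then $\langle \tilde b \rangle \cap \langle h \rangle = \{1\}$ by coprimality, so $(\tilde b h)^{2k+1} = \tilde b^{2k+1} h$ is never $1$, whence $\tilde b h$ has $2$-power order and $h = \tilde b^{-1}(\tilde b h) \in \Gamma_{2^\infty}$.

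The two ``in particular'' assertions then follow immediately: if $\Gamma$ is torsion then $\Gamma_\infty$ is trivial, and if $\Gamma$ has no element of order $2$ then, iterating the fact that the square of an element of order $2^n$ with $n \geq 1$ has order $2^{n-1}$, $\Gamma$ has no element of positive $2$-power order at all, so $\Gamma_{2^\infty}$ is trivial. The only slightly delicate step is the elementary order bookkeeping in the dihedral-like subgroup $\langle h, \tilde b \rangle$; the sole model-theoretic input is a single application of inner ultrahomogeneity to produce an inverting element.
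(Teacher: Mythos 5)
Your proof is correct and follows essentially the same route as the paper: reduce to the odd-order case via the Bezout decomposition into commuting $2$-part and odd part, produce an inverting element by inner ultrahomogeneity, raise it to an odd power so that it has infinite or $2$-power order, and exhibit the odd-order element as a product of two elements of that kind. The only (cosmetic) difference is the concluding identity — you write $h=\tilde b^{-1}(\tilde b h)$ and compute the order of $\tilde b h$ directly, while the paper uses $g^{-2}=h^{g^{-1}}h^{-1}$ and normality of $\Gamma_\infty\cdot\Gamma_{2^\infty}$ — which are two forms of the same dihedral computation.
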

    \begin{proof}
        Fix any $g\in \Gamma$. If the order of $g$ is infinite or a power of $2$, then clearly $g\in \Gamma_\infty\cdot \Gamma_{2^\infty}$. Consider the case when the order of $g$ is finite and odd. Then there is some $h\in \Gamma$ such that $g^h=g^{-1}$. By raising $h$ to an odd power, we can ensure that $h\in \Gamma_\infty$ or $h\in \Gamma_{2^\infty}$.

        But then
        $1=g^hg=h^{-1}ghg=h^{-1}h^{g^{-1}}g^2$, so $g^{-2}=h^{g^{-1}}h^{-1}\in \Gamma_\infty\cdot \Gamma_{2^\infty}$ (because $\Gamma_\infty\cdot \Gamma_{2^\infty}$ is normal). Since the order of $g$ is odd, it follows that $g\in \Gamma_\infty\cdot \Gamma_{2^\infty}$.

        Finally, if $g\in\Gamma$ is of order $2^nm$, where $m$ is odd, then clearly $g\in \langle g^m,g^{2n}\rangle$ which is contained in $\Gamma_\infty\cdot \Gamma_{2^\infty}$, since $g^m\in \Gamma_{2^\infty}$ and the order of $g^{2^n}$ is odd.
    \end{proof}

    \begin{corollary}
        \label{cor:p_powers_generate_gamma}
        If $\Gamma$ is torsion inner ultrahomogeneous, not of finite exponent, then for each prime $p$, $\Gamma=\Gamma_{p^\infty}$.
    \end{corollary}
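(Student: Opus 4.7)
The case $p=2$ is immediate from Proposition~\ref{prop:2_or_inf_generate} (torsion forces $\Gamma_\infty=\{e\}$), so I fix an odd prime $p$. The plan is to show the stronger statement that $\Gamma_{q^a}\subseteq \Gamma_{p^\infty}$ for every prime $q$ and every $a\geq 1$: since $\Gamma$ is torsion and every cyclic group $\langle g\rangle$ splits as a direct sum of its primary components, every $g\in\Gamma$ is a product of prime-power-order elements, so these inclusions yield $\Gamma=\Gamma_{p^\infty}$. The case $q=p$ is trivial, so the essential task is $q\neq p$.

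For this, I will pick an abelian subgroup $A\leq \Gamma$ isomorphic to $\bZ[\zeta_p]/q^a$ (which as an abelian group is $(\bZ/q^a\bZ)^{p-1}$); such an $A$ exists by Corollary~\ref{cor:trichotomy}. Multiplication by $\zeta_p$ is an order-$p$ automorphism $\sigma$ of $A$ which is fixed-point-free: for $1\leq i<p$ the element $\zeta_p^i-1$ has norm $\pm p$, coprime to $q$, so it is a unit in $A$. By inner ultrahomogeneity there is $g\in \Gamma$ inducing $\sigma$ via conjugation; writing $|g|=p^l\cdot r$ with $\gcd(r,p)=1$, the power $c:=g^r$ has order $p^l$ and induces the still fixed-point-free order-$p$ automorphism $\tau:=\sigma^r$ on $A$.

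The remaining calculation is that for any $v\in A$, the product $vc$ has $p$-power order. From $(\tau-1)\sum_{k=0}^{p-1}\tau^k = \tau^p - 1 = 0$ together with injectivity of $\tau-1$, I get $\sum_{k=0}^{p-1}\tau^k(v)=0$ in $A$, and hence $\sum_{j=0}^{p^l-1}\tau^j(v)=p^{l-1}\sum_{k=0}^{p-1}\tau^k(v)=0$; expanding $(vc)^{p^l}$ in $A\rtimes\langle c\rangle$ then gives $1$, so $vc$ has order dividing $p^l$. Writing $v=(vc)\cdot c^{-1}$ exhibits $v$ as a product of two elements of $p$-power order, so $v\in\Gamma_{p^\infty}$. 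Since $A$ has elements of every order dividing $q^a$, inner ultrahomogeneity propagates this to all elements of order $q^a$ in $\Gamma$. The only nontrivial ingredient is the cyclotomic fact that $\zeta_p^i-1$ is a unit in $A$; I do not anticipate any further obstacle.
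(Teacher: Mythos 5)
Your proof is correct, but it takes a genuinely different route from the paper's. The paper also reduces to producing, inside the normal closure of a single element of $p$-power order, elements of every finite order $n$; but it does so by taking $A\cong(\bZ/2\bZ)^N$ with $N=\max(2n,p,5)$, realising every permutation of the generators by conjugation, and using the fact that the normal closure of a $p$-cycle in $S_N$ (for $N\geq 5$) contains $A_N$ — hence contains a product of two disjoint $n$-cycles, whose conjugating witness has order divisible by $n$. Your argument instead fixes a prime power $q^a$ with $q\neq p$, takes $A\cong\bZ[\zeta_p]/q^a\cong(\bZ/q^a\bZ)^{p-1}$ with the fixed-point-free order-$p$ automorphism given by multiplication by $\zeta_p$ (fixed-point-freeness via the norm computation $N(\zeta_p^i-1)=\pm p$ being coprime to $q$), adjusts the witness to $p$-power order $p^l$, and checks $(vc)^{p^l}=1$ directly, so that each $v\in A$ is a product of two elements of $p$-power order; CRT inside $\langle g\rangle$ then handles arbitrary torsion elements, and $p=2$ is dispatched by Proposition~\ref{prop:2_or_inf_generate}. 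Both proofs draw on Theorem~\ref{thm:char_of_finite_exp}/Corollary~\ref{cor:trichotomy} for the requisite abelian subgroups. The paper's version needs only elementary abelian $2$-subgroups plus one classical fact about $S_N$, and treats all primes $p$ uniformly; yours requires embedding the larger groups $(\bZ/q^a\bZ)^{p-1}$ but yields the sharper explicit conclusion that every element of prime-power order coprime to $p$ is a product of exactly two elements of $p$-power order, and its key computation $(vc)^{p^l}=1$ is a clean one-line identity once fixed-point-freeness of $\tau-1$ is established. All the steps check out: in particular $l\geq 1$ is automatic (since $c$ acts nontrivially on $A$), and the expansion of $(vc)^{p^l}$ does not actually need $\langle A,c\rangle$ to be a semidirect product, only that $c$ normalises $A$.
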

    \begin{proof}
        Fix any $p$ and $n$. We will find an element $g$ of $p$-power order such that a product of its conjugates is of order $n$, which will complete the proof by inner ultrahomogeneity.

        Let $N=\max(2n,p,5)$. Note that by Theorem~\ref{thm:char_of_finite_exp}, $\Gamma$ has a subgroup $A$ isomorphic to $(\bZ/2\bZ)^N$, generated by $g_1,\ldots,g_N$.

        Note that by inner ultrahomogeneity, for any $\sigma\in S_N$, there is a $g_\sigma$ such that for $k=1,\ldots,N$ we have $g_k^{g_\sigma}=g_{\sigma(k)}$. In particular, for any $\tau,\sigma\in S_N$, we have
        \[
            g_k^{g_\sigma^{g_\tau}}=(g_k)^{g_\tau^{-1} g_\sigma g_\tau}=g_{\tau^{-1}(k)}^{g_\sigma g_\tau}=g_{\tau\sigma\tau^{-1}(k)}=g_{\sigma^{\tau^{-1}}(k)}.
        \]

        Let $g=g_{(1,\ldots,p)}$. We may assume without loss of generality that the order of $g$ is a $p$-power (by raising it to an appropriate power if necessary).

        Since $N\geq 5$, all nontrivial normal subgroups of $S_N$ contain $A_N$. In particular, the normal subgroup generated by $(1,\ldots,p)$ contains $A_N$. It follows that if $\sigma\in A_N$, then we can choose $g_\sigma$ in the group generated by $g^{\Gamma}$.

        In particular, this holds for $\sigma=(1,2,\ldots,n)(n+1,\ldots,2n)$. Clearly, the order of $g_\sigma$ is divisible by $n$ in this case, so a power of $g_\sigma$ is of order $n$, which completes the proof.
    \end{proof}

    \begin{remark}
        If $\Age(\Gamma)$ is closed under $\times G$ for some fixed nontrivial $G$, then the conclusion of Corollary~\ref{cor:p_powers_generate_gamma} can be improved to say that for each $p$, there is a \emph{fixed} $k$ such that $\Gamma=\Gamma_{p^k}$.
    \end{remark}

    \begin{proposition}
        If $\Gamma$ is inner ultrahomogeneous, then it is not a nontrivial direct product.
    \end{proposition}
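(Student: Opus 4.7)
The plan is to argue by contradiction: assume $\Gamma = G_1 \times G_2$ with both factors nontrivial. The key leverage is Remark~\ref{rem:conj_order}: in $\Gamma$, two elements are conjugate if and only if they have the same order. Since $G_1, G_2$ are normal in $\Gamma$ (as direct factors), each conjugacy class of $\Gamma$ sits entirely inside $G_1$, entirely inside $G_2$, or entirely outside $G_1 \cup G_2$.

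The heart of the argument is to locate the subgroups $\Gamma_\infty$ and $\Gamma_{2^\infty}$ of Proposition~\ref{prop:2_or_inf_generate} within $G_1 \cup G_2$. The key observation is that the order of $(a_1, a_2) \in \Gamma$ equals $\mathrm{lcm}(\mathrm{ord}(a_1), \mathrm{ord}(a_2))$. Thus if $\Gamma$ has an element of infinite order, one of its projections has infinite order, producing an infinite-order element inside $G_1$ or $G_2$; by the first paragraph every infinite-order element lies in that one factor, so $\Gamma_\infty \leq G_i$ for some $i$. For $\Gamma_{2^\infty}$, note that if it is nontrivial it contains an element of order $2$ (pass to $g^{2^{k-1}}$ where $g$ has order $2^k$). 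The lcm formula then puts all order-$2$ elements in one factor, say $G_j$; inductively, if $g \in \Gamma$ has order $2^k$, then $g^{2^{k-1}}$ has order $2$ and so lies in $G_j$, which via the lcm formula forces $g$ itself to be in $G_j$. Hence $\Gamma_{2^\infty} \leq G_j$.

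Combining with $\Gamma = \Gamma_\infty \cdot \Gamma_{2^\infty}$: if $i = j$ (covering the degenerate case where either subgroup is trivial), then $\Gamma \leq G_i$ forces $G_{3-i}$ to be trivial, a contradiction. Otherwise, without loss of generality $\Gamma_\infty \leq G_1$ and $\Gamma_{2^\infty} \leq G_2$, both nontrivial. Picking $a \in G_1$ of infinite order and $b \in G_2$ of order $2$, the product $ab$ has infinite order; hence $ab \in \Gamma_\infty \leq G_1$, yet $ab = (a, b) \notin G_1$ since $b \neq 1$, the desired contradiction.

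I expect the fiddly part to be the power-of-$2$ localization, where one must rule out ``mixed'' order-$2^k$ elements with both projections nontrivial; this follows from the lcm-of-orders calculation together with the conjugacy-class partition above, but needs a small induction to propagate the information from order $2$ up to arbitrary $2^k$.
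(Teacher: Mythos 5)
Your proof is correct, and it runs on the same two ingredients as the paper's proof --- Remark~\ref{rem:conj_order} together with Proposition~\ref{prop:2_or_inf_generate} --- but routes them differently. The paper first shows both factors are torsion (an infinite-order $g\in G_1$ times a nonidentity $h\in G_2$ is an infinite-order element conjugate to $g$ yet outside the normal subgroup $G_1$), then observes that direct factors of an inner ultrahomogeneous group are themselves inner ultrahomogeneous, so by Proposition~\ref{prop:2_or_inf_generate} each nontrivial torsion factor contains an element of order $2$; two such elements sitting in different normal factors are conjugate, which is the final contradiction. You instead stay inside $\Gamma$ and localise $\Gamma_\infty$ and $\Gamma_{2^\infty}$ each into a single factor; this buys you the freedom not to verify that direct factors inherit inner ultrahomogeneity, at the cost of the $2$-power localisation. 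On that point: the one-step version as written (``$g^{2^{k-1}}$ has order $2$ and lies in $G_j$, hence $g\in G_j$'') is not by itself sufficient --- take $g=(a_1,a_2)$ with $a_1$ of order $2^k$ and $a_2$ of order $2^{k-1}$; then $g^{2^{k-1}}\in G_1$ while $g\notin G_1$. The clean repair is: once all order-$2$ elements lie in $G_j$, the factor $G_{3-j}$ contains no element of order $2$, hence no nontrivial element of $2$-power order at all (any such element has a power of order $2$), so the $G_{3-j}$-component of every element of $2$-power order is trivial. You flagged exactly this as the fiddly step, and with that fix the argument closes.
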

    \begin{proof}
        We will argue by contraposition: suppose towards contradiction that  $\Gamma=G_1\times G_2$ is ultrahomogeneous with nontrivial factors. If $G_1$ has an element $g$ of infinite order and $h\in G_2$ is nonidentity, then $gh\in G_1h$ is of infinite order, so it is conjugate to $g$, a contradiction, since $G_1\unlhd \Gamma$. Thus, $G_1$ is torsion, and similarly $G_2$.

        Now, it is not hard to see that $G_1$ and $G_2$ must also be inner ultrahomogeneous, so by Proposition~\ref{prop:2_or_inf_generate}, if they are nontrivial, each has an element of order $2$. But by Remark~\ref{rem:conj_order}, this contradicts their normality in $\Gamma$.
    \end{proof}

    \begin{proposition}
        If $\Gamma$ is an inner ultrahomogeneous group, then its commutator subgroup is the group generated by the set of squares. In particular, if $\Gamma$ is $2$-divisible, then it is equal to $\Gamma$. In any case, the index of the commutator subgroup is at most $2$.
    \end{proposition}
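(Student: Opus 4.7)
The plan is to establish the equality $[\Gamma,\Gamma] = \langle g^2 : g \in \Gamma\rangle$, from which both consequences follow easily. For the inclusion $[\Gamma,\Gamma] \supseteq \langle g^2\rangle$, it suffices to show that every square is a commutator: for any $g\in\Gamma$, the map $g \mapsto g^{-1}$ is a partial automorphism (it preserves order), so by inner ultrahomogeneity there is $h\in\Gamma$ with $g^h = g^{-1}$, and then a direct computation using $g^h = h^{-1}gh$ yields $[h,g] = h^{-1}g^{-1}hg = g^2$. The reverse inclusion is routine: $\langle g^2 : g\in \Gamma\rangle$ is characteristic (hence normal), and $\Gamma/\langle g^2\rangle$ has exponent dividing $2$, hence is abelian. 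The $2$-divisible case is then immediate, since then every element is a square and so $\Gamma = \langle g^2\rangle = [\Gamma,\Gamma]$.

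For the index bound, the plan is to show that $\Gamma^{ab} := \Gamma/[\Gamma,\Gamma]$ (which we already know has exponent at most $2$) is cyclic of order at most $2$. By Remark~\ref{rem:conj_order}, the image of $g$ in $\Gamma^{ab}$ depends only on the order of $g$, so it suffices to analyze one representative per order class. For odd finite order $m$, picking $k$ with $2k \equiv 1 \pmod m$ gives $g = (g^k)^2$, so $\bar g = 0$. For infinite order, applying inner ultrahomogeneity to the partial automorphism $g \mapsto g^2$ (both sides have infinite order) yields $h$ with $g^h = g^2$, whence $g^{-1}h^{-1}gh = g$, so $g$ is a commutator. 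For finite even order $n = 2^am$ with $m$ odd, the element $g^{1-m}$ is an even power of $g$, hence a square, so $g \equiv g^m \pmod{[\Gamma,\Gamma]}$ where $g^m$ has order $2^a$; thus $\Gamma^{ab}$ is generated by images of $2$-power-order elements. Finally, for an element of order $2^a$, Proposition~\ref{prop:char_divisibility} shows that it is a square as soon as $\Gamma$ contains an element of order $2^{a+1}$, hence it maps to $0$ in $\Gamma^{ab}$.

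Combining these four cases, $\Gamma^{ab}$ is generated by the image of an element of maximal $2$-power order in $\Gamma$, if such a maximum exists, and is trivial otherwise; in the former case, that image has order at most $2$. Hence $[\Gamma:[\Gamma,\Gamma]] \leq 2$ in all cases. The principal (minor) obstacle is organizing the case analysis by order type for the index bound; each individual case is a short direct application of inner ultrahomogeneity (together with the already-established fact that every square lies in $[\Gamma,\Gamma]$).
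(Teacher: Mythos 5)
Your proof is correct and follows essentially the same route as the paper: squares are commutators via a witness inverting $g$, the reverse inclusion is the standard exponent-$2$ quotient argument, and the index bound comes from Proposition~\ref{prop:char_divisibility} together with Remark~\ref{rem:conj_order}. Your case analysis by order type is just a more explicit unpacking of the paper's one-sentence argument that every nontrivial coset contains an element of the maximal $2$-power order, so all nontrivial cosets coincide.
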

    \begin{proof}
        By inner ultrahomogeneity, for every $g$ there is some $h$ such that $h^{-1}gh=g^h=g^{-1}$. In particular, $[g,h^{-1}]=g^2$.

        The opposite inclusion is true for any group: the subgroup of $G$ generated by squares is normal, and the quotient is a group of exponent $2$, hence abelian.

        The fact that the commutator subgroup is of index at most $2$ follows from the first part and Proposition~\ref{prop:char_divisibility}, since they imply that all nontrivial cosets contain an element of order $2^n$ (for a fixed $n$), so nonidentity elements in the abelianisation are conjugate.
    \end{proof}

    (Note that $\Gamma=S_3$ shows that the derived subgroup can be a proper subgroup of $\Gamma$.)

    \subsection{Disjoint amalgamation and the definability of subgroups}
    Recall the following definition.
    \begin{definition}
        Let $\cK$ be a class of first order structures. We say that $\cK$ has \emph{disjoint amalgamation} if for every diagram of the shape $A\leftarrow C\rightarrow B$ in $\cK$, there is some $D\in \cK$ into which $A,B$ embed and such the images of $C$ via both embeddings are exactly the intersection of images of $A$ and $B$.
    \end{definition}
    For example, $\cK$ has disjoint amalgamation if either it is either closed under amalgamated free products, or it is the class of finite groups (by Fact~\ref{fct:finite_amalg}).
    \begin{remark}
        If $\cK$ has the amalgamation property, then in the above, it suffices to consider the case when $A=B$ and the two embeddings of $C$ coincide.
    \end{remark}
    \begin{fact}
        \label{fct:disjoint_non_algebraicity}
        If $M$ is an ultrahomogeneous structure and $\Age(M)$ has disjoint amalgamation, then for every finitely generated substructure $A\subseteq M$ and element $b\in M\setminus A$, the orbit $\Aut(M/A)\cdot b$ is infinite.
    \end{fact}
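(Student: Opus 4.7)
The plan is to construct, by induction on $n$, pairwise distinct elements $b = b_0, b_1, b_2, \ldots \in M$, all lying in the orbit $\Aut(M/A)\cdot b$. Disjoint amalgamation will supply a fresh copy of $b$ at each stage, and ultrahomogeneity will realise it inside $M$ over the previously constructed data.

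Set $B = \langle A, b\rangle$ and suppose we have already produced pairwise distinct $b_0=b, b_1,\ldots, b_n\in \Aut(M/A)\cdot b$. Let $B_n = \langle A, b_0,\ldots, b_n\rangle\in\Age(M)$. Applying disjoint amalgamation to the span $B_n \leftarrow A \rightarrow B$, we obtain $D\in\Age(M)$ together with embeddings $\iota\colon B_n\hookrightarrow D$ and $\jmath\colon B\hookrightarrow D$ whose images intersect exactly in the image of $A$. Let $b'\coloneqq \jmath(b)\in D$; then $b'\notin \iota(B_n)$ because $b\notin A$.

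Next, I would transfer this configuration back into $M$ using the standard universality of ultrahomogeneous structures over finitely generated substructures. Concretely, since $D\in\Age(M)$, there is \emph{some} embedding $h\colon D\hookrightarrow M$; then $h\circ\iota$ is a finite partial automorphism of $M$ (identifying $\iota(B_n)\subseteq D$ with $B_n\subseteq M$), so by ultrahomogeneity it extends to some $\sigma\in\Aut(M)$. The composition $f \coloneqq \sigma^{-1}\circ h\colon D\hookrightarrow M$ is then an embedding which restricts to the inclusion on $B_n$. Define $b_{n+1} \coloneqq f(b')$. By construction $b_{n+1}\notin B_n$, so it is distinct from $b_0,\ldots,b_n$; moreover, $f\circ \jmath\colon B\to M$ is an embedding agreeing with the inclusion on $A$ and sending $b$ to $b_{n+1}$, so the map $A\cup\{b\}\to A\cup\{b_{n+1}\}$ fixing $A$ and sending $b\mapsto b_{n+1}$ is a finite partial automorphism of $M$. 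Ultrahomogeneity upgrades it to an automorphism in $\Aut(M/A)$ carrying $b$ to $b_{n+1}$, completing the induction step.

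The only subtle point is the extension-of-embedding step, i.e.\ producing an embedding $D\hookrightarrow M$ that restricts to the identity on $B_n$; this is the standard universality consequence of ultrahomogeneity and does not require any further hypothesis on $\cK = \Age(M)$. Everything else is bookkeeping. Iterating the construction yields infinitely many distinct elements of $\Aut(M/A)\cdot b$, as required.
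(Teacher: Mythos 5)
Your proof is correct: it is exactly the standard argument that the paper invokes without writing out (the paper's ``proof'' of this fact is just the remark that it is well known), namely iterated disjoint amalgamation of $\langle A,b\rangle$ over $A$ against the previously built configuration, realised inside $M$ via the extension-of-embeddings consequence of ultrahomogeneity. The only point worth making explicit is that you use the two embeddings of $A$ into the amalgam $D$ to coincide pointwise (not merely to have equal images), which is the standard convention for disjoint amalgamation and is consistent with the paper's remark following its definition.
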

    \begin{proof}
        This is well-known and the proof is standard.
    \end{proof}
    The following proposition shows that in a quite general context, we can actually quite easily define the subgroup of an inner ultrahomogeneous group generated by a given finite set.
    \begin{proposition}
        \label{prop:subgroup_definability}
        If $\Gamma$ is inner ultrahomogeneous and $\Age(\Gamma)$ has disjoint amalgamation (or, more generally, $\Gamma$ satisfies the conclusion of Fact~\ref{fct:disjoint_non_algebraicity}), then for every finite $A_0\subseteq \Gamma$ we have $C^2(A_0)=\langle A_0\rangle$; in particular, $\langle A_0\rangle$ is a definable subset of $\Gamma$ and for each $n$, the family of $n$-generated subgroups of $\Gamma$ is uniformly definable.
    \end{proposition}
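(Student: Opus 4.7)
The plan is to prove both inclusions of $C^2(A_0) = \langle A_0 \rangle$, with the nontrivial one proceeding by contradiction using the disjoint amalgamation hypothesis through Fact~\ref{fct:disjoint_non_algebraicity} together with Remark~\ref{rem:centr_orbit}.

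First, the inclusion $\langle A_0 \rangle \subseteq C^2(A_0)$ is immediate: every element of $A_0$ commutes with each element of $C(A_0)$ by definition, so $A_0 \subseteq C^2(A_0)$, and since $C^2(A_0)$ is a subgroup, it contains the whole $\langle A_0 \rangle$. The heart of the argument is the reverse inclusion. Suppose for contradiction that some $b \in C^2(A_0)$ lies outside $\langle A_0 \rangle$. Since $\Aut(\Gamma / A_0) = \Aut(\Gamma / \langle A_0 \rangle)$, Remark~\ref{rem:centr_orbit} tells us that the orbit $\Aut(\Gamma / \langle A_0 \rangle) \cdot b$ coincides with the conjugation orbit of $b$ under the action of $C(\langle A_0 \rangle) = C(A_0)$. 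But $b \in C^2(A_0)$ means precisely that $b$ commutes with every element of $C(A_0)$, so this conjugation orbit is the singleton $\{b\}$, hence finite. On the other hand, $\langle A_0 \rangle$ is a finitely generated substructure of $\Gamma$ and $b \notin \langle A_0 \rangle$, so Fact~\ref{fct:disjoint_non_algebraicity} (applicable under either of the stated hypotheses) forces this orbit to be infinite, a contradiction.

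For the definability consequences, observe that for any finite tuple $\bar a = (a_1,\dots,a_n) \in \Gamma^n$, the centraliser $C(\bar a)$ is defined by the quantifier-free formula $\varphi(x;\bar a) \equiv \bigwedge_{i=1}^n x a_i = a_i x$, and the double centraliser $C^2(\bar a)$ is then defined by $\psi(x;\bar a) \equiv \forall y\, \bigl(\varphi(y;\bar a) \to xy = yx\bigr)$. By the first part, $\psi(x;\bar a)$ defines exactly the subgroup $\langle a_1,\dots,a_n\rangle$, and since $\psi$ does not depend on $\bar a$ beyond its role as a parameter tuple, the family $\{\langle a_1,\dots,a_n\rangle : \bar a \in \Gamma^n\}$ is uniformly definable.

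The only step with any subtlety is the recognition that the hypotheses hand us exactly the two ingredients we need to collide: inner ultrahomogeneity gives us (via Remark~\ref{rem:centr_orbit}) that the abstract model-theoretic orbit of $b$ over $\langle A_0 \rangle$ is realised concretely by conjugation inside $\Gamma$, while disjoint amalgamation (via Fact~\ref{fct:disjoint_non_algebraicity}) guarantees that this model-theoretic orbit cannot be a singleton. Once these are aligned, the formulas exhibiting definability write themselves.
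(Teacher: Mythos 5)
Your proof is correct and is exactly the argument the paper intends: the paper's own proof is the one-line "Immediately follows from Fact~\ref{fct:disjoint_non_algebraicity} and Remark~\ref{rem:centr_orbit}," and your write-up simply makes explicit how those two facts collide (the conjugation orbit of $b\in C^2(A_0)$ under $C(A_0)$ is a singleton, while the fact forces it to be infinite for $b\notin\langle A_0\rangle$). The easy inclusion and the uniform-definability formula $\psi(x;\bar a)$ are also handled correctly.
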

    \begin{proof}
        Immediately follows from Fact~\ref{fct:disjoint_non_algebraicity} and Remark~\ref{rem:centr_orbit}.
    \end{proof}

    \begin{remark}
        Note that the conclusion of Proposition~\ref{prop:subgroup_definability} is false if $\Gamma$ is nontrivial centre and $\langle A_0\rangle$ does not contain the centre (clearly, $Z(\Gamma)\leq C^2(A_0)$). For instance, this happens for $A_0$ generating a torsion-free subgroup of the group in Example~\ref{ex:nontrivial_centre}.
    \end{remark}

    \begin{remark}
        On the other hand, at least in some special cases, variants of Proposition~\ref{prop:subgroup_definability} do hold without assuming disjoint amalgamation, possibly with different formulas. See e.g.\ Proposition~\ref{prop:odd_cycle_definability}.
    \end{remark}

    \section{Model-theoretic properties}
    \label{sec:mt}
    \subsection{Preliminary observations}
    In this section, we make some preliminary observations of mostly model-theoretic nature which will be useful throughout.

    We start by noting an important consequence of the trichotomy in Corollary~\ref{cor:trichotomy}.
    \begin{lemma}
        \label{lem:inf_exp_indiscernible}
        If $\Gamma$ is an inner ultrahomogeneous, then the following are equivalent:
        \begin{itemize}
            \item
            $\Gamma$ is not of finite exponent,
            \item
            $\Gamma$ contains an infinite indiscernible set,
            \item
            for each $n$, $\Gamma$ contains an indiscernible sequence of $n$ elements.
        \end{itemize}
    \end{lemma}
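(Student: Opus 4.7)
My plan is to establish the three-way equivalence via the cycle $(1)\Rightarrow(2)\Rightarrow(3)\Rightarrow(1)$. The implication $(2)\Rightarrow(3)$ is immediate, since any $n$ distinct elements of an infinite indiscernible set, listed in any order, form an indiscernible sequence of length $n$.

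For $(1)\Rightarrow(2)$ the plan is to read off an explicit indiscernible set from the trichotomy. By Corollary~\ref{cor:trichotomy}, if $\Gamma$ is not of finite exponent, then $\Gamma$ contains either a copy of $(\bQ/\bZ)^{\oplus\bN}$ (hence $(\bZ/p\bZ)^{\oplus\bN}$ for some prime $p$) or a copy of $\bZ^{\oplus\bN}$. Fix a basis $(e_i)_{i\in\bN}$ of such a subgroup. For any two injective tuples $(e_{i_1},\ldots,e_{i_k})$ and $(e_{j_1},\ldots,e_{j_k})$ of basis elements, the assignment $e_{i_l}\mapsto e_{j_l}$ is a group isomorphism between the subgroups they generate (both are free on the chosen bases over $\bZ/p\bZ$ or over $\bZ$), hence a partial automorphism of $\Gamma$. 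By ultrahomogeneity it extends to an automorphism of $\Gamma$, so the two tuples share a type, and $(e_i)_{i\in\bN}$ is indiscernible as a set.

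For $(3)\Rightarrow(1)$ the plan is to prove the contrapositive via a shift trick: if $\Gamma$ has exponent $e<\infty$, then any indiscernible sequence of pairwise distinct elements has length at most $e$. Given such $(a_1,\ldots,a_n)$, indiscernibility gives that the tuples $(a_1,\ldots,a_{n-1})$ and $(a_2,\ldots,a_n)$ share a quantifier-free type, so the shift $a_i\mapsto a_{i+1}$ extends to an isomorphism $\langle a_1,\ldots,a_{n-1}\rangle\to\langle a_2,\ldots,a_n\rangle$, and thus is a partial automorphism of $\Gamma$. By inner ultrahomogeneity there is $g\in\Gamma$ with $a_i^g=a_{i+1}$ for all $i<n$; since $g^e=1$, iterating conjugation yields $a_1=a_1^{g^e}=a_{1+e}$ whenever $1+e\leq n$, contradicting distinctness. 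Hence $n\leq e$.

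I anticipate no significant obstacles: the only subtle point is that indiscernibility forces a sequence to be either constant or to consist of pairwise distinct entries (the equality $a_1=a_2$ propagates to every pair by indiscernibility), so the distinct-elements assumption in $(3)\Rightarrow(1)$ is automatic for the non-trivial content of $(3)$.
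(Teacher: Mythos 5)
Your proof is correct and takes essentially the same route as the paper: the implication $(1)\Rightarrow(2)$ is exactly the paper's argument (Corollary~\ref{cor:trichotomy} gives a copy of $\bZ^{\oplus\bN}$ or of an elementary abelian group of infinite rank, whose generators form an indiscernible set by ultrahomogeneity). Your shift-conjugation argument for $(3)\Rightarrow(1)$ and the observation that an indiscernible sequence is constant or injective correctly supply the details the paper dismisses as ``straightforward.''
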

    \begin{proof}
        If $\Gamma$ is not of finite exponent, then by Corollary~\ref{cor:trichotomy}, it contains a copy of either $\bZ^{\oplus \bN}$ or $(\bZ/2\bZ)^{\oplus\bN}$, and by ultrahomogeneity, their generators form an indiscernible set. The rest is straightforward.
    \end{proof}

    \begin{remark}
        \label{rem:large_partial_aut}
        If $\Gamma$ is inner ultrahomogeneous and $p$ is any partial automorphism of $\Gamma$ (not necessarily finite) and $\Gamma^*\succeq \Gamma$ is $\lvert p\rvert^+$-saturated, then there is some $g_0\in \Gamma^*$ such that for $g\in\dom p$ we have $g^{g_0}=p(g)$.
    \end{remark}

    \begin{lemma}
        \label{lem:centraliser_index}
        Suppose $\Gamma$ is inner ultrahomogeneous, not of finite exponent. Fix $n\in \bN$, $n>1$ and $g\in \Gamma$ which is either of infinite order or, if $\Gamma$ is torsion, of order divisible by $n$.
        Then $C(g)\subsetneq C(g^n)$.

        Furthermore, for each $N$, we can find $g_1,g_2,\ldots,g_N$ each of the same order as $g$, such that $\langle g,g_1,\ldots,g_N\rangle$ is naturally isomorphic to $\langle g\rangle^{N+1}$ and moreover $C(g,g_1,\ldots,g_N)\subsetneq C(g^n,g_1,\ldots,g_N)$.
    \end{lemma}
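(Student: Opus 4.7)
The plan is to establish the \emph{Furthermore} statement; the opening claim $C(g)\subsetneq C(g^n)$ is then the $N=0$ case. In both the torsion and non-torsion regimes I would first produce a commuting tuple $g,g_1,\ldots,g_N$ of elements of the same order as $g$ generating $\langle g\rangle^{N+1}$, and then build the separator $h$ as the inner-ultrahomogeneous realisation of an explicit automorphism $\sigma$ of a suitable finitely generated subgroup containing $g,g_1,\ldots,g_N$ that fixes $g^n$ and each $g_i$ but moves $g$.

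For the tuple: when $\Gamma$ is torsion and $g$ has order $m$ (so $n\mid m$), Theorem~\ref{thm:char_of_finite_exp} lets us embed $(\bZ/m\bZ)^{N+1}\leq\Gamma$ with $g$ placed as a chosen generator by ultrahomogeneity; when $g$ has infinite order, Proposition~\ref{prop:non-torsion_gives_Zn} supplies $\bZ^{\oplus\bN}\leq\Gamma$ from which $g,g_1,\ldots,g_N$ are extracted analogously.

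In the torsion case, Theorem~\ref{thm:char_of_finite_exp} additionally gives $(\bZ/m\bZ)^{N+1}\times\bZ/n\bZ\leq\Gamma$; letting $z$ generate the extra $\bZ/n\bZ$ factor, the map $g\mapsto gz$, $g_i\mapsto g_i$, $z\mapsto z$ is an automorphism of $\langle g,g_1,\ldots,g_N,z\rangle$ with $(gz)^n=g^nz^n=g^n$ (since $z^n=1$) and $gz\neq g$. Inner ultrahomogeneity realises this automorphism as conjugation by some $h\in\Gamma$, placing $h$ in $C(g^n,g_1,\ldots,g_N)\setminus C(g,g_1,\ldots,g_N)$ as required.

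The non-torsion case is more delicate: any self-embedding of a torsion-free abelian group that fixes $g^n$ is forced to fix $g$, so the automorphism $\sigma$ must live on a non-abelian subgroup. The plan is to embed into $\Gamma$ the amalgamated free product $\bZ^{N+1}*_H\bZ^{N+1}$, where $H=\langle g^n,g_1,\ldots,g_N\rangle$ has index $n$ in each factor; this supplies a second infinite-order element $c$ with $c^n=g^n$ commuting with all $g_i$, and the swap automorphism $g\leftrightarrow c$ fixing $H$ then provides $\sigma$. The main obstacle is justifying this embedding from the available hypotheses: one combines the amalgamation property of $\Age(\Gamma)$ (automatic from ultrahomogeneity), the non-torsion richness of $\Gamma$ supplied by Corollary~\ref{cor:all_non_torsion} (which gives every countable free group and $\bZ^{\oplus\bN}$ as subgroups), and inner EPPA applied to the identity partial automorphism of $H$, to extract a non-collapsing witness $c\neq g$.
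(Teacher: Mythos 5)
Your torsion-case argument is correct and is essentially the paper's: Theorem~\ref{thm:char_of_finite_exp} supplies $\langle g\rangle^{N+1}\times(\bZ/n\bZ)$, and the automorphism $g\mapsto gz$ fixing $z$ and the $g_i$ is realised by a conjugator lying in $C(g^n,g_1,\ldots,g_N)\setminus C(g,g_1,\ldots,g_N)$. Your diagnosis of the non-torsion case is also correct: no automorphism of a torsion-free abelian subgroup can fix $g^n$ while moving $g$, so that route is closed. But your proposed fix has a genuine gap. The lemma assumes only inner ultrahomogeneity and infinite exponent, so there is no reason for $\Age(\Gamma)$ to contain $\bZ^{N+1}*_H\bZ^{N+1}$ (equivalently, the one-relator group $\langle g,c\mid g^n=c^n\rangle$ times $\bZ^N$): the abstract amalgamation property only yields \emph{some} amalgam, in which the two copies of the factor may collapse onto each other, forcing $c=g$; Corollary~\ref{cor:all_non_torsion} only produces abelian and free subgroups, neither of which contains a second $n$-th root of $g^n$ commuting with $g^n$ but not with $g$; and inner EPPA applied to $\id_H$ merely produces an element of $C(H)\supseteq H$, not a new root. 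None of the cited tools manufactures the non-collapsing witness $c$, and that witness is the entire content of the non-torsion case.

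The paper avoids the problem by reversing the roles of conjugator and conjugated element. Take $\bZ^{N+n+1}\leq\Gamma$ with generators $e_0,\ldots,e_n,f_1,\ldots,f_N$ (Proposition~\ref{prop:non-torsion_gives_Zn}), and use inner ultrahomogeneity to find $h$ realising the partial automorphism that \emph{squares} $e_0$, cyclically permutes $e_1,\ldots,e_n$, and fixes the $f_i$. The squaring forces $h$ to have infinite order, so by Remark~\ref{rem:conj_order} one may conjugate the whole configuration and assume $h=g$, setting $g_i=f_i$; then $\langle g\rangle$ meets $\langle g_1,\ldots,g_N\rangle$ trivially because no nontrivial power of $g$ centralises $e_0$, while $e_1$ is fixed by $g^n$ but moved by $g$, so $e_1\in C(g^n,g_1,\ldots,g_N)\setminus C(g,g_1,\ldots,g_N)$. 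This stays entirely inside abelian subgroups plus inner ultrahomogeneity and needs no closure hypothesis on $\Age(\Gamma)$. If you want to keep your structure, you should replace the amalgamated-free-product step with this trick.
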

    \begin{proof}
        Consider first the case when the order of $g$ is infinite. Then $\Gamma$ is not torsion, so by Proposition~\ref{prop:non-torsion_gives_Zn}, it contains a subgroup isomorphic to $\bZ^{n+1}$. Then by inner ultrahomogeneity, there is a $h\in\Gamma$ which squares the first generator and cyclically permutes the other $n$ generators. This implies that $h$ has infinite order, so by ultrahomogeneity, we may assume that $h=g$.
        Now, let $\gamma$ be one of the cyclically permuted generators. Then $\gamma\in C(g^n)\setminus C(g)$.

        Now, suppose $\Gamma$ is torsion and $n$ divides the order of $g$. By Theorem~\ref{thm:char_of_finite_exp}, we may assume without loss of generality that $\Gamma\geq \langle g\rangle\times (\bZ/n\bZ)$. Let $g_0$ be a generator of $\bZ/n\bZ$. Then by inner ultrahomogeneity, there is a $h\in\Gamma$ such that $g^h=gg_0$ and $g_0^h=g_0$.
        It follows that $h\in C(g^n)\setminus C(g)$.

        For the ``furthermore'' part, just note that in the non-torsion case, we can instead start with $\bZ^{N+n+1}$ and have $h$ commute with the extra $N$ generators. Then $\langle h\rangle $ trivially intersects the group generated by them, because no nonidentity element of $\langle h\rangle$ commutes with the generator squared by $h$. In the torsion case, start with a group isomorphic to $\langle g\rangle^{N+1}\times (\bZ/n\bZ)$ and ensure that $h$ commutes with the generators of other copies of $\langle g\rangle$.
    \end{proof}

    \begin{remark}
        \label{rem:power_centraliser}
        The first part of Lemma~\ref{lem:centraliser_index} holds also for arbitrary inner ultrahomogeneous $\Gamma$ if $g$ is of finite order divisible by $n$ and either $n>2$ or $n=2$ and the order of $g$ is divisible by $4$.

        Furthermore, under the assumptions of the Lemma (i.e.\ if $g$ is of infinite order or $\Gamma$ is torsion of infinite exponent), one can actually show that the centralisers are not only properly contained, but $[C(g^n):C(g)]$ is infinite.
    \end{remark}

    \begin{corollary}
        \label{cor:indiscernible_strong}
        If $\Gamma$ is inner ultrahomogeneous and not of finite exponent and $\Gamma^*\succeq\Gamma$ is $\aleph_1$-saturated, then there is an infinite indiscernible set $X\subseteq\Gamma^*$ such that:
        \begin{itemize}
            \item
            $X$ freely generates an abelian subgroup of $\Gamma^*$,
            \item
            given any injective partial function $f\colon X\to X$
            there is a $g\in \Gamma^*$ such that for $x\in \dom f$ we have $x^g=f(x)$,
            \item
            for every $f\colon X\to \bN$, there is a $g\in\Gamma^*$ such that for each $x\in X$ we have $g\in C(x^{f(x)})\setminus C(x)$ if $f(x)\neq 1$ and $g\in C(x)$ otherwise.
        \end{itemize}
    \end{corollary}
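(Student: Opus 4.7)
The plan is to realise $X$ inside $\Gamma^*$ as a countable indiscernible sequence of ``infinite-order'' (each $x^n\neq 1$) free abelian generators, and then to deduce the three bullets by realising suitable types via $\aleph_1$-saturation, with finite satisfiability handled by the results of the previous sections transferred from $\Gamma$ to $\Gamma^*$.

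I would first construct $X$. By Corollary~\ref{cor:trichotomy}, since $\Gamma$ is not of finite exponent, it contains either $\bQ^{\oplus\bN}$ (non-torsion case) or $(\bQ/\bZ)^{\oplus\bN}$ (torsion case); either way, for each $n$ and $k$ the sentence ``there exist $n$ pairwise commuting elements of order exceeding $k$ which freely generate a free abelian subgroup of rank $n$'' belongs to $\Th(\Gamma)$. The partial type $\Psi$ over $\emptyset$ asserting that $(x_i)_{i<\omega}$ is a sequence of commuting infinite-order free generators of a torsion-free abelian group is therefore finitely satisfiable in $\Gamma$ and, by iterated $\aleph_1$-saturation, realised in $\Gamma^*$. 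Ultrahomogeneity of $\Gamma$ ensures that types over $\emptyset$ are determined by quantifier-free types, a property inherited by $\Gamma^*$; since all $n$-subsets of $X$ have the same quantifier-free type, $X$ is indiscernible, yielding the first bullet.

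For the second bullet, an injective partial $f\colon X\to X$ induces a partial group isomorphism between free abelian subgroups of $\Gamma^*$, and each finite restriction of $f$ is realised by conjugation in $\Gamma^*$ via inner ultrahomogeneity of $\Gamma$ (which, as a scheme of first-order sentences, transfers to $\Th(\Gamma^*)$). Hence the partial type $\{x^g=f(x):x\in\dom f\}$ over the countable parameter set $X$ is finitely satisfiable, so by $\aleph_1$-saturation realised by some $g\in\Gamma^*$.

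For the third bullet, given $f\colon X\to\bN$ I would consider the partial type
\[
\Theta(g)=\{[g,x]=1 : f(x)=1\}\cup\{[g,x^{f(x)}]=1 : f(x)\neq 1\}\cup\{[g,x]\neq 1 : f(x)\neq 1\}
\]
over $X$ and verify that it is finitely satisfiable; $\aleph_1$-saturation will then complete the proof. Fix $x_1,\ldots,x_n\in X$ with $f(x_i)=k_i$ and write $J=\{i:k_i\neq 1\}$. For each $j\in J$, the ``furthermore'' part of Lemma~\ref{lem:centraliser_index} --- for each fixed $n$ and $N$, a first-order consequence of inner ultrahomogeneity over a sufficiently strong quantifier-free approximation of ``free abelian of infinite order'', hence valid in $\Gamma^*$ as well --- yields $h_j\in\Gamma^*$ commuting with $x_i$ for $i\neq j$ and with $x_j^{k_j}$, but not with $x_j$. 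Setting $g:=\prod_{j\in J}h_j$, the factors $h_{j'}$ with $j'\neq j$ commute with $x_j$ and so peel off from both sides of $gx_jg^{-1}=h_jx_jh_j^{-1}\neq x_j$, while commutation of $g$ with $x_i$ ($i\in I$) and with each $x_j^{k_j}$ is immediate. The main technical obstacle, present only in the torsion case where $\Gamma$ itself contains no element of infinite order, is exactly this lift of Lemma~\ref{lem:centraliser_index}'s furthermore part to our configuration in $\Gamma^*$; it is resolved by observing that its conclusion, for fixed parameters, follows by first-order transfer from a finite qftp approximation that is realisable in $\Gamma$.
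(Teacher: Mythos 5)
Your overall strategy (realise a suitable type by $\aleph_1$-saturation, then invoke Lemma~\ref{lem:centraliser_index}) is the right one, but the transfer steps you use to move from $\Gamma$ to $\Gamma^*$ are not valid, and they carry the whole weight of the proof. The central problem is your claim that ``types over $\emptyset$ are determined by quantifier-free types, a property inherited by $\Gamma^*$''. Inner ultrahomogeneity gives this only for tuples \emph{realised in $\Gamma$} (they are conjugate by an element of $\Gamma$); it is not a first-order property and does not pass to elementary extensions. Indeed, Proposition~\ref{prop:omitted_type} and Proposition~\ref{prop:no_qe} exhibit, in any sufficiently saturated $\Gamma^*$, a pair $a,b$ freely generating an abelian subgroup with $C(a)\subsetneq C(b)$, so $(a,b)$ and $(b,a)$ have the same quantifier-free type but different types. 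Consequently a realisation of your type $\Psi$ in $\Gamma^*$ need not be an indiscernible set at all, and the first bullet is not established. The same defect undermines the second bullet: inner ultrahomogeneity is not ``a scheme of first-order sentences'' (the paper even notes that every infinite inner ultrahomogeneous group has an elementary extension that is \emph{not} inner ultrahomogeneous), and Remark~\ref{rem:large_partial_aut} only conjugates partial automorphisms whose domain lies in $\Gamma$ --- which fails for your $X\subseteq\Gamma^*$ in the torsion case. Finally, for the third bullet, no finite quantifier-free approximation $\chi$ of ``$\bar x$ freely generates $\bZ^n$'' satisfies $\Gamma\models\forall\bar y(\chi(\bar y)\to\theta(\bar y))$ for the existential centraliser statement $\theta$ (a tuple in $\Gamma$ can satisfy $\chi$ while, say, $y_n$ is a large power of $y_1$, in which case no witness exists), so ``first-order transfer from a finite qftp approximation'' does not produce the elements $h_j$.

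The fix is to put everything you need into the single countable type over $\emptyset$ that you realise: not just the quantifier-free conditions, but also the indiscernibility scheme $\varphi(x_{\bar\imath})\leftrightarrow\varphi(x_{\bar\jmath})$, the conjugacy statements $\exists g\,\bigwedge_k x_{i_k}^g=x_{j_k}$, and the existential centraliser statements from Lemma~\ref{lem:centraliser_index}. Finite satisfiability of this larger type is checked \emph{inside $\Gamma$}, on the generators of $\bZ^{N}$ (Proposition~\ref{prop:non-torsion_gives_Zn}) or of $(\bZ/N!\bZ)^N$ (Theorem~\ref{thm:char_of_finite_exp}), using inner ultrahomogeneity of $\Gamma$ itself and the ``furthermore'' clause of Lemma~\ref{lem:centraliser_index}; $\aleph_1$-saturation then does the rest. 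This is essentially what the paper's proof does (in the non-torsion case it simplifies further by taking $X\subseteq\Gamma$ outright, where ultrahomogeneity applies directly, and it reduces the third bullet by compactness to functions $f$ equal to $1$ except at one point). Your product trick $g=\prod_j h_j$ for combining single-coordinate witnesses is fine once the $h_j$ exist, but as written the existence of the $h_j$, and of the indiscernible $X$ itself, is not justified.
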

    \begin{proof}
        Note that for the third part, by compactness, it is enough to consider $f$ which is equal to $1$ for all but one $x\in X$.

        If $\Gamma$ is not torsion, we can take for $X$ a subset of $\Gamma$ freely generating an abelian subgroup (this exists by Proposition~\ref{prop:non-torsion_gives_Zn}). Then the second bullet is immediate by Remark~\ref{rem:large_partial_aut}. Finally, the third bullet follows from the preceding paragraph and Lemma~\ref{lem:centraliser_index}.

        If $\Gamma$ is torsion, then by Theorem~\ref{thm:char_of_finite_exp}, for each $N$, $\Gamma$ has a subgroup isomorphic to $(\bZ/N!\bZ)^N$. Then the conclusion follows via a standard argument similar to the non-torsion case, using compactness and Lemma~\ref{lem:centraliser_index}.
    \end{proof}

    \subsection{Pseudofiniteness}
    \begin{remark}
        \label{rem:not_pseudofinite}
        If $\Gamma$ is torsion or torsion-free inner ultrahomogeneous of infinite exponent, then by Lemma~\ref{lem:centraliser_index} (or, more generally, by Remark~\ref{rem:power_centraliser}, if $\Gamma$ is e.g.\ divisible), it satisfies the following sentence:
        \[
            \forall x_1\exists x_2 C(x_2)\subsetneq C(x_1),
        \]
        which is not true in any finite group, and hence these groups are not pseudofinite. Note that written explicitly in group language, it is a $\forall\exists\forall$-sentence; on the other hand, if $\Gamma$ is locally finite, then any $\exists\forall$-sentence true in $\Gamma$ is true in a finite group.
    \end{remark}

    \subsection{Failure of \texorpdfstring{$\aleph_0$}{א0}-saturation, q.e. and smallness}

    \begin{proposition}
        \label{prop:not_small}
        If $\Gamma$ is inner ultrahomogeneous and not of finite exponent, then $\Th(\Gamma)$ is not small (in fact, $S_3(\emptyset)$ is uncountable).
    \end{proposition}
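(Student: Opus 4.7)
The plan is to produce $2^{\aleph_0}$ pairwise incompatible complete types, splitting according to the two cases of Corollary~\ref{cor:trichotomy}: either $\Gamma$ embeds every finite cyclic group (torsion-rich case), or $\Gamma$ embeds $\bQ^{\oplus\bN}$ and is torsion-free. In the first case $2^{\aleph_0}$ many $1$-types will suffice; in the second, genuine $3$-types are needed.

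For the torsion-rich case, for each function $\alpha\colon\{\text{primes}\}\to\bN\cup\{\infty\}$ with infinite support, consider the partial $1$-type $\pi_\alpha(x)$ asserting $x^{p^n}=1$ for $n\geq\alpha(p)$ and $x^{p^n}\neq 1$ for $n<\alpha(p)$. Every finite fragment of $\pi_\alpha$ is realised in $\Gamma$ itself by an element of appropriate finite order (such an element exists because $\Gamma$ embeds all finite cyclic groups), so compactness gives a complete $1$-type $p_\alpha$ consistent with $\Th(\Gamma)$, and distinct $\alpha$ give types separated by a single formula $x^{p^n}=1$. This yields $\lvert S_1(\emptyset)\rvert\geq 2^{\aleph_0}$, a fortiori the desired conclusion.

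For the torsion-free case, pass to a sufficiently saturated $\Gamma^*\succeq\Gamma$ and apply Corollary~\ref{cor:indiscernible_strong} to obtain an indiscernible set $X=\{x_n\mid n\in\bN\}$ freely generating an abelian subgroup, together with, for each $f\colon X\to\bN$, a conjugator $g_f\in\Gamma^*$ such that $g_f\in C(x_n^{f(n)})\setminus C(x_n)$ whenever $f(n)>1$ and $g_f\in C(x_n)$ otherwise. For each $S\subseteq\bN$ let $g_S$ be the conjugator associated with $f_S(x_n)=2$ for $n\in S$ and $f_S(x_n)=1$ for $n\notin S$, and encode $S$ into the $3$-type of a triple $(g_S,g_S',h)$, where $g_S'$ and $h$ are auxiliary conjugators chosen so that their pairwise interactions with $g_S$ become first-order visible over $\emptyset$ via existential formulas.

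The main obstacle I anticipate is precisely the torsion-free case: passing from a type over the parameters $X$ (where the encoding is transparent) to a type over $\emptyset$ (where the $x_n$ cannot appear). My plan is to encode the statement $n\in S$ by an existential formula $\phi_n(x,y,z)$ asking for a witness with a prescribed centraliser profile against $x$, $y$ and $z$ jointly, and then to show via inner ultrahomogeneity together with Lemma~\ref{lem:centraliser_index} that every finite Boolean combination of the $\phi_n$'s is consistent with $\Th(\Gamma)$. Reducing this consistency to amalgamating finitely many prescribed centraliser profiles within a finitely generated subgroup is the routine step; the essential input is that in $\Gamma^*$ we can always find new conjugators realising any prescribed finite partial automorphism, so distinct $S$ yield incompatible complete $3$-types and hence $\lvert S_3(\emptyset)\rvert\geq 2^{\aleph_0}$.
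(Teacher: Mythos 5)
Both halves of your proposal have problems; the second is an acknowledged gap, but the first contains an actual error. In the torsion case, the formula $x^{p^n}=1$ says that the order of $x$ \emph{divides} $p^n$; it does not say that the $p$-adic valuation of the order is at most $n$. Consequently, as soon as $\alpha$ takes finite nonzero values at two distinct primes $p,q$, the type $\pi_\alpha$ contains $x^{p^{\alpha(p)}}=1$ and $x^{q^{\alpha(q)}}=1$, which force $x=1$; and if $\alpha(p)=0$ somewhere, $\pi_\alpha$ contains $x=1$ outright. So almost all of your $\pi_\alpha$ are inconsistent. The obstruction is fundamental: a quantifier-free $1$-type in the language of groups is determined by $\{k:x^k=1\}$, i.e.\ by $\mathrm{ord}(x)\in\bN\cup\{\infty\}$, so there are only countably many quantifier-free $1$-types, and no encoding by equations $x^k=1$ can yield uncountably many types. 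To salvage a $1$-type argument you must use non-quantifier-free invariants; the paper's remark following Proposition~\ref{prop:omitted_type} does exactly this with the formulas $C(x)=C(x^p)$, via Remark~\ref{rem:power_centraliser}.

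In the non-torsion case you have correctly isolated the crux --- making the encoding visible over $\emptyset$ --- but you stop exactly there: the formulas $\phi_n$, the auxiliary elements $g_S'$, $h$, and the consistency verification are all left unspecified, so this is a plan rather than a proof. The missing device, which is the whole content of the paper's argument, is to use the \emph{second} coordinate to enumerate the indiscernible set definably: take an infinite indiscernible set $(a_n)$ (Lemma~\ref{lem:inf_exp_indiscernible}), let $y$ realise (in a saturated extension, via Remark~\ref{rem:large_partial_aut}) a conjugator with $a_n^y=a_{n+2}$, so that $x^{y^k}=a_{2k}$ for $x=a_0$ is a term in $(x,y)$ over $\emptyset$, and let $z$ realise a conjugator fixing $a_{2n}$ exactly for $n\in A$. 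Then the quantifier-free types $p_A(x,y,z)=\{(x^{y^k})^z=x^{y^k}\mid k\in A\}\cup\{(x^{y^k})^z\neq x^{y^k}\mid k\notin A\}$ are pairwise inconsistent and each consistent. Note that this argument needs only the indiscernible set and therefore works uniformly in the torsion and non-torsion cases, so the case split (and hence the broken first half) is unnecessary.
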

    \begin{proof}
        For each $A\subseteq \bN$, put:
        \[
            p_A(x,y,z)\coloneqq \{(x^{y^k})^z=x^{y^k}\mid k\in A\}\cup \{(x^{y^k})^z\neq x^{y^k}\mid k\notin A\}.
        \]
        It is clear that $p_A$ are pairwise inconsistent.

        On the other hand, if $\Gamma$ is inner ultrahomogeneous and it is not of finite exponent, then by Lemma~\ref{lem:inf_exp_indiscernible}, it contains an infinite indiscernible set $(a_n)_{n\in \bN}$. Then we have partial automorphisms $\sigma$ such that $\sigma(a_n)=a_{n+2}$ for each $n$ and, for each $A$, $\tau_A$ such that $\tau_A(a_{2n})=a_{2n}$ if $n\in\ A$ and $\tau_A(a_{2n})=a_{2n+1}$ if $n\notin A$. This and Remark~\ref{rem:large_partial_aut} easily imply that each $p_A$ is consistent.
    \end{proof}

    \begin{proposition}
        \label{prop:omitted_type}
        If $\Gamma$ is inner ultrahomogeneous and not of finite exponent, then the $2$-type $p(x,y)$ expressing that $C(x)\subsetneq C(y)$ and $x,y$ freely generate an abelian subgroup of $\Gamma$ is (consistent and) omitted in $\Gamma$.
    \end{proposition}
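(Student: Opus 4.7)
The plan is to establish the two halves of the claim---consistency of $p(x,y)$ over $\Th(\Gamma)$ and its omission in $\Gamma$---by separate arguments. For consistency I will use compactness, realising each finite fragment of $p(x,y)$ directly inside $\Gamma$ by an explicit construction. For omission I will suppose a realising pair exists in $\Gamma$ and derive a contradiction from inner ultrahomogeneity applied to the partial automorphism that swaps $x$ and $y$.

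For consistency, fix a finite $p_0 \subseteq p(x,y)$ and choose $N$ large enough that every free-generation clause $x^a y^b \neq 1$ appearing in $p_0$ satisfies $\lvert a\rvert + \lvert b\rvert \le N$. I pick $x \in \Gamma$ whose order is either infinite (when $\Gamma$ is not torsion) or a multiple of $N+1$ exceeding $N^2 + 2N$ (in the torsion case this is available by Theorem~\ref{thm:char_of_finite_exp}, which guarantees that every $\bZ/n\bZ$ embeds into $\Gamma$), and I set $y \coloneqq x^{N+1}$. Then $x$ and $y$ commute, the size constraint on the order of $x$ rules out $x^{a+(N+1)b} = 1$ for $0 < \lvert a\rvert + \lvert b\rvert \le N$, and Lemma~\ref{lem:centraliser_index} gives $C(x) \subsetneq C(x^{N+1}) = C(y)$. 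Hence the pair realises $p_0$ in $\Gamma$, and compactness yields the consistency of $p(x,y)$.

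For omission, suppose toward contradiction that some $x, y \in \Gamma$ realise $p(x,y)$. Since $\langle x, y\rangle \cong \bZ^2$, both $x$ and $y$ have infinite order, so by Remark~\ref{rem:conj_order} they are conjugate in $\Gamma$. The swap $\sigma \colon \langle x, y\rangle \to \langle x, y\rangle$ sending $x \mapsto y$ and $y \mapsto x$ is an automorphism of $\bZ^2$, hence a finite partial automorphism of $\Gamma$; inner ultrahomogeneity furnishes some $h \in \Gamma$ with $x^h = y$ and $y^h = x$. A direct computation from $y = h^{-1} x h$ yields $C(y) = h^{-1} C(x) h$, and applying the same reasoning to $x = y^h$ gives $C(x) = h^{-2} C(x) h^2$, so $h^2$ normalises $C(x)$. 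On the other hand, the hypothesis $C(x) \subsetneq C(y) = h^{-1} C(x) h$, conjugated by $h$, rewrites as $h C(x) h^{-1} \subsetneq C(x)$; since conjugation by $h$ is a bijection of $\Gamma$ it preserves strict inclusions, and iterating gives $h^2 C(x) h^{-2} \subsetneq C(x)$, contradicting $h^2 \in N(C(x))$. The key point to verify carefully is exactly this tension: the swap simultaneously forces $h^2$ to normalise $C(x)$ and $h$ itself to shrink it strictly, which is impossible.
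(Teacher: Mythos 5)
Your proof is correct. The consistency half is essentially the paper's argument (the paper takes $y=g^N$ with $g$ of order $N^2$ or infinite; your $y=x^{N+1}$ with order a multiple of $N+1$ exceeding $N^2+2N$ is the same computation), relying on Lemma~\ref{lem:centraliser_index} and Theorem~\ref{thm:char_of_finite_exp} in exactly the same way. The omission half is where you genuinely diverge. The paper applies inner ultrahomogeneity to the partial automorphism of $\langle x,y\rangle$ that \emph{fixes $x$ and squares $y$}: the resulting witness lies in $C(x)\setminus C(y)$, which kills the inclusion $C(x)\subseteq C(y)$ outright and in fact shows that \emph{no} free abelian pair in $\Gamma$ can satisfy even the non-strict inclusion. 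You instead apply inner ultrahomogeneity to the \emph{swap} $x\mapsto y$, $y\mapsto x$ (a legitimate finite partial automorphism, since the transposition of generators is an automorphism of $\bZ^2$), obtaining $h$ with $x^h=y$, $y^h=x$; then $h^2$ centralises $x$ and hence normalises $C(x)$, while conjugating the strict inclusion $C(x)\subsetneq C(y)=h^{-1}C(x)h$ twice forces $h^2C(x)h^{-2}\subsetneq C(x)$ --- a genuine contradiction, and all the steps (conjugation preserving strict inclusion, the iteration) check out under the paper's conventions. The paper's route is a bit more economical and yields the slightly stronger local statement used implicitly elsewhere (an explicit element separating the centralisers of any free abelian pair), whereas yours is a clean self-contained contradiction that only uses the symmetry of the realising pair; both are valid instances of the same underlying mechanism, namely that inner ultrahomogeneity manufactures conjugators with prescribed action on a finitely generated abelian subgroup.
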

    \begin{proof}
        In more detail, $p(x,y)$ consists of a formula expressing $C(x)\subsetneq C(y)$ (which implies that $x$ and $y$ commute) as well as, for every $(k,l)\in\bZ^2\setminus\{(0,0)\}$, the formula $x^ky^l\neq 1$.

        To see that this is consistent, fix any integer $N>0$ and let $g\in \Gamma$ be of order infinite or $N^2$ if $\Gamma$ is torsion (in the torsion case, this exists by Theorem~\ref{thm:char_of_finite_exp}). Then if $-N< k,l< N$, then $g^k(g^N)^l=g^{k+Nl}$. Then $\abs{k+Nl}\leq \abs{k}+N\abs{l}<N-1+N(N-1)=N^2-1<N^2$, and $k+Nl\neq 0$, so $g^k(g^N)^l\neq 1$, and by Lemma~\ref{lem:centraliser_index}, $C(g)\subsetneq C(g^N)$.

        To see that this type is omitted, just note that if $g_1,g_2$ freely generate an abelian subgroup of $\Gamma$, then there is a partial automorphism which fixes $g_1$ and squares $g_2$. The corresponding witness to inner ultrahomogeneity is in $C(g_1)\setminus C(g_2)$.
    \end{proof}

    \begin{remark}
        If $\Gamma$ is inner ultrahomogeneous and it has elements of all finite orders, then one can use Remark~\ref{rem:power_centraliser} to show that for each set $A$ of odd primes, the set of formulas saying that for each odd prime $p$, $C(x)=C(x^p)$ if and only if $p\in A$ is consistent. Since these sets are obviously pairwise inconsistent, this implies that $S_1(\emptyset)$ is uncountable. Since inner ultrahomogeneity easily implies that there are only countably many $1$-types realised in $\Gamma$ (even if $\Gamma$ is uncountable), we see that in such case, most of the types in $S_1(\emptyset)$ are necessarily omitted.
    \end{remark}

    \begin{proposition}
        \label{prop:no_qe}
        If $\Gamma$ is inner ultrahomogeneous and not of finite exponent, then the formula $C(x)\subseteq C(y)$ is not equivalent to any quantifier-free formula (in the language of groups); in particular, $\Gamma$ does not admit elimination of quantifiers.
    \end{proposition}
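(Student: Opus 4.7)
The plan is to use Proposition~\ref{prop:omitted_type} to exhibit, in a sufficiently saturated elementary extension $\Gamma^*\succeq\Gamma$, a pair $(a,b)$ such that $(b,a)$ realises the same quantifier-free type over $\emptyset$ as $(a,b)$ but the formula $C(x)\subseteq C(y)$ distinguishes them. Since, for each quantifier-free $\varphi(x,y)$, the statement ``$C(x)\subseteq C(y)\leftrightarrow\varphi(x,y)$'' is first-order, such a pair in $\Gamma^*$ rules out QF-equivalence of $C(x)\subseteq C(y)$ in $\Gamma$ as well, by $\Gamma\preceq\Gamma^*$.

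Concretely, I would first invoke Proposition~\ref{prop:omitted_type} to get consistency of the type $p(x,y)$, then pass to an $\aleph_0$-saturated $\Gamma^*\succeq\Gamma$ and choose $(a,b)$ realising $p$, so that $C(a)\subsetneq C(b)$ and $a^kb^l\neq 1$ for every $(k,l)\in\bZ^2\setminus\{(0,0)\}$. Because $a\in C(a)\subseteq C(b)$, the elements $a,b$ commute, so any group word $w(x,y)$ evaluated at a commuting pair collapses to $\alpha^k\beta^l$, where $k$ and $l$ are the total $x$- and $y$-exponents of $w$. Therefore $w(a,b)=a^kb^l=1$ iff $(k,l)=(0,0)$, and $w(b,a)=b^ka^l=a^lb^k=1$ iff $(l,k)=(0,0)$; these conditions coincide, so $(a,b)$ and $(b,a)$ satisfy precisely the same atomic, and hence quantifier-free, formulas.

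On the other hand, $C(x)\subseteq C(y)$ holds at $(a,b)$ by $C(a)\subsetneq C(b)$, and fails at $(b,a)$ by that same strict inclusion; combining this with the two observations above completes the argument. There is essentially no hard step: all of the content is already in Proposition~\ref{prop:omitted_type}, and the present statement merely packages it via the symmetry $(a,b)\leftrightarrow(b,a)$ of the quantifier-free type of a commuting pair with no standard integer relations.
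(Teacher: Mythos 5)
Your proposal is correct and follows essentially the same route as the paper: both pass to a saturated extension, realise the type from Proposition~\ref{prop:omitted_type}, and use the symmetry of the quantifier-free type of $(a,b)$ versus $(b,a)$ against the asymmetry of $C(x)\subseteq C(y)$. You merely spell out the word-collapsing computation that the paper leaves implicit.
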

    \begin{proof}
        By Proposition~\ref{prop:omitted_type}, we have in a saturated model two elements $a,b$ such that $(a,b)$ and $(b,a)$ have the same quantifier-free type, but $C(a)\subseteq C(b)\not\subseteq C(a)$.
    \end{proof}

    \begin{corollary}
        \label{cor:not_saturated}
        If $\Gamma$ is infinite inner ultrahomogeneous, then it does not realise all finitary types over $\emptyset$. In particular, it is not $\aleph_0$-saturated.
    \end{corollary}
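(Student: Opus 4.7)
The plan is to split on whether $\Gamma$ has finite exponent. If $\Gamma$ does not have finite exponent, then Proposition~\ref{prop:omitted_type} directly supplies a consistent $2$-type over $\emptyset$ that is omitted in $\Gamma$, which immediately yields both claims (non-$\aleph_0$-saturation being a special case).

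For the finite-exponent case, I will invoke Proposition~\ref{prop:fin_exp_almost_fg} to obtain a finite tuple $\bar{a}$ in $\Gamma$ such that $\Aut(\Gamma/\bar{a})=\{1\}$. Ultrahomogeneity of $\Gamma$ then forces each $b\in\Gamma$ to be the unique realisation of $\tp(b/\bar{a})$ in $\Gamma$. I then bifurcate according to model-theoretic algebraicity. If some $b\in\Gamma$ is not algebraic over $\bar{a}$, then every formula in $\tp(b/\bar{a})$ has infinitely many realisations in $\Gamma$, so by compactness $\tp(b/\bar{a})\cup\{x\neq b\}$ is consistent over the finite parameter set $\bar{a}b$ while being manifestly omitted in $\Gamma$. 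Otherwise every $b\in\Gamma$ is algebraic over $\bar{a}$, so $\Gamma\subseteq\acl(\bar{a})$; hence $\Gamma$ is countable and coincides with $\acl(\bar{a})$, and in any proper elementary extension $\Gamma^*$ of $\Gamma$ (which exists by L{\"o}wenheim--Skolem) an element $b^*\in\Gamma^*\setminus\Gamma$ satisfies $b^*\notin\acl_{\Gamma^*}(\bar{a})=\acl_\Gamma(\bar{a})$, making $\tp(b^*/\bar{a})$ a consistent non-algebraic type omitted in $\Gamma$ (any realisation in $\Gamma$ would itself be algebraic and thus unable to share a type with the non-algebraic $b^*$).

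To upgrade an omitted consistent type $p$ over a finite tuple $\bar{c}$ to one over $\emptyset$, I pass to the complete type of $(\bar{c},x)$ over $\emptyset$ with $x$ realising $p$: if some $(\bar{c}',b')\in\Gamma$ realised this extended type, ultrahomogeneity of $\Gamma$ would yield an automorphism sending $\bar{c}'\mapsto\bar{c}$, and the image of $b'$ under it would realise $p$ in $\Gamma$, contradicting omission. The main obstacle is the finite-exponent subcase: triviality of $\Aut(\Gamma/\bar{a})$ only guarantees uniqueness of realisations in $\Gamma$ of each type over $\bar{a}$, not genuine model-theoretic algebraicity, which is precisely why the bifurcation is needed and why the ``all algebraic'' alternative has to be handled via a proper elementary extension rather than internally to $\Gamma$.
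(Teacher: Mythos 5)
Your proof is correct, and in the infinite-exponent case it coincides with the paper's (both invoke Proposition~\ref{prop:omitted_type}). In the finite-exponent case, however, you take a genuinely different route. The paper uses Lemma~\ref{lem:inf_exp_indiscernible}: finite exponent means that for some $n$ there is no indiscernible sequence of $n$ elements in $\Gamma$, while the $n$-type of an indiscernible sequence is consistent (by Ramsey's theorem and compactness, since $\Gamma$ is infinite) --- and this type is already over $\emptyset$, so no further work is needed. You instead use Proposition~\ref{prop:fin_exp_almost_fg} to get a finite tuple $\bar a$ with $\Aut(\Gamma/\bar a)$ trivial, deduce via ultrahomogeneity that every element is the unique realisation in $\Gamma$ of its type over $\bar a$, and then run a purely model-theoretic dichotomy on algebraicity (a non-algebraic element gives an omitted extension of its own type by $x\neq b$; if everything is algebraic, a proper elementary extension supplies an omitted non-algebraic type), followed by the standard trick of folding the parameters into the variables and using ultrahomogeneity to move a hypothetical realisation of the parameter tuple back onto $\bar a$. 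All of these steps check out: the existence of a proper elementary extension, the invariance of $\acl$ under elementary extensions, and the upgrade to a type over $\emptyset$ are all handled correctly. The trade-off is that the paper's argument is shorter and only needs the indiscernibility characterisation, whereas yours leans on the heavier structural result about discreteness of $\Aut(\Gamma)$ plus extra bookkeeping; on the other hand, your argument isolates a reusable general principle (an infinite structure in which some finite tuple has trivial pointwise stabiliser and which is ultrahomogeneous cannot realise all finitary types over $\emptyset$) that does not depend on the group-theoretic specifics.
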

    \begin{proof}
        If $\Gamma$ is of finite exponent, then by Lemma~\ref{lem:inf_exp_indiscernible}, it omits some $n$-type defining an indiscernible sequence (which is consistent, since $\Gamma$ is infinite).

        Otherwise, $\Gamma$ is not of finite exponent, then it omits a $2$-type by Proposition~\ref{prop:omitted_type}.
    \end{proof}

    \begin{corollary}
        \label{cor:not_categorical}
        If $\Gamma$ is infinite inner ultrahomogeneous, then it is not $\aleph_0$-categorical.
    \end{corollary}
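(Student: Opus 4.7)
The plan is to deduce this directly from Corollary~\ref{cor:not_saturated} via the Ryll-Nardzewski theorem. Corollary~\ref{cor:not_saturated} tells us that $\Gamma$ omits some finitary type $p(\bar{x}) \in S_n(\emptyset)$. Suppose for contradiction that $\Th(\Gamma)$ is $\aleph_0$-categorical. Then by Ryll-Nardzewski, $S_n(\emptyset)$ is finite for every $n$; hence every type in $S_n(\emptyset)$ is an isolated point of the Stone space $S_n(\emptyset)$. Since isolated types are realised in every model of the theory, $p$ would in particular be realised in $\Gamma$, contradicting Corollary~\ref{cor:not_saturated}.

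The only mild subtlety is that $\Gamma$ need not be countable (Ryll-Nardzewski is a statement about the theory rather than about any particular model), but this causes no difficulty: the implication ``isolated types over $\emptyset$ are realised in every model'' does not depend on cardinality, so the finiteness of $S_n(\emptyset)$ forces $\Gamma$ to realise every type over $\emptyset$ regardless of $\lvert\Gamma\rvert$. Thus there is no real obstacle, and the corollary follows in essentially one line from the previous one.
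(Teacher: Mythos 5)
Your proof is correct and follows essentially the same route as the paper, which simply invokes the fact that every $\aleph_0$-categorical structure is $\aleph_0$-saturated together with Corollary~\ref{cor:not_saturated}; your Ryll-Nardzewski argument is just an unpacking of that fact (and your remark handling the possibly uncountable case is a reasonable clarification, though not a genuine divergence).
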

    \begin{proof}
        Immediate by Corollary~\ref{cor:not_saturated}, since every $\aleph_0$-categorical structure is $\aleph_0$-saturated.
    \end{proof}

    \begin{corollary}
        Every infinite inner ultrahomogeneous group has an elementary extension which is not inner ultrahomogeneous.
    \end{corollary}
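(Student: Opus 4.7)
The plan is to produce an $\aleph_1$-saturated elementary extension $\Gamma^*\succeq\Gamma$ (which exists by standard compactness) and show that $\Gamma^*$ cannot be inner ultrahomogeneous, splitting according to the exponent of $\Gamma$. In either case, ``$\Gamma$ has exponent $n$'' (namely $\forall x\, x^n=1$) is first-order, so the property of having some fixed finite exponent, as well as the schema expressing infinite exponent, is inherited by $\Gamma^*$.

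Suppose first that $\Gamma$ has infinite exponent. By Proposition~\ref{prop:omitted_type} the $2$-type $p(x,y)$ asserting that $x,y$ freely generate an abelian subgroup and $C(x)\subsetneq C(y)$ is consistent with $\Th(\Gamma)=\Th(\Gamma^*)$, hence by saturation realised by some pair $(a,b)\in\Gamma^{*2}$. Since $\langle a,b\rangle$ is free abelian on $\{a,b\}$, a word $w(x,y)\in F_2$ evaluates to $1$ at $(a,b)$ precisely when both of its $x$- and $y$-exponent sums vanish, a condition symmetric in $x$ and $y$, so $(a,b)$ and $(b,a)$ share their quantifier-free type. Consequently the swap $\{a\mapsto b,\,b\mapsto a\}$ is a finite partial automorphism of $\Gamma^*$. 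I then claim it does not extend to any automorphism $\sigma$ of $\Gamma^*$: such a $\sigma$ would send $C(a)$ bijectively onto $C(\sigma(a))=C(b)$ and $C(b)$ onto $C(a)$, so any $c\in C(b)\setminus C(a)$ would satisfy $\sigma(c)\in C(a)\setminus C(b)$, contradicting $C(a)\subseteq C(b)$. Hence $\Gamma^*$ is not even ultrahomogeneous, and in particular not inner ultrahomogeneous.

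If instead $\Gamma$ has finite exponent (a case that must still be addressed even though no infinite such $\Gamma$ is known to exist), I would argue by contradiction: assuming $\Gamma^*$ is inner ultrahomogeneous, Lemma~\ref{lem:inf_exp_indiscernible} applied to $\Gamma^*$ provides some $n$ such that $\Gamma^*$ contains no indiscernible sequence of $n$ elements. But standard Ramsey-plus-compactness arguments show that any $\aleph_0$-saturated model of the complete theory of an infinite structure realises the partial type expressing that a tuple is indiscernible, for any fixed finite length; in particular $\Gamma^*$ contains such sequences, giving the required contradiction.

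The main obstacle is the centraliser-swap argument in the infinite-exponent case: establishing that $(a,b)$ and $(b,a)$ have the same quantifier-free type, which relies on the free abelian structure furnished by $p$, and then using the strict centraliser inclusion to rule out any automorphism realising the swap. Once this algebraic point is in place, the remainder is routine saturation and compactness.
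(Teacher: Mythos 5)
Your proof is correct and follows essentially the same route as the paper: pass to a saturated elementary extension and invoke the fact (Corollary~\ref{cor:not_saturated}) that an infinite inner ultrahomogeneous group cannot be $\aleph_0$-saturated, with the same case split on exponent via Proposition~\ref{prop:omitted_type} and Lemma~\ref{lem:inf_exp_indiscernible}. The only cosmetic difference is that in the infinite-exponent case you rule out extending the swap $(a,b)\mapsto(b,a)$, whereas the paper's omitted-type argument uses the partial automorphism fixing $a$ and squaring $b$; both are valid.
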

    \begin{proof}
        Immediate, since every structure is elementarily equivalent to an $\aleph_0$-saturated one.
    \end{proof}

    \begin{remark}
        \label{rem:elem_subst}
        It is easy to see that an elementary substructure of an inner ultrahomogeneous group is also inner ultrahomogeneous.
    \end{remark}

    \begin{remark}
        If $\Gamma$ is inner ultrahomogeneous and $\Gamma^*\equiv \Gamma$, then for any finite tuples $a,b\in\Gamma^*$, if $\qftp(a)=\qftp(b)$ is isolated, then they are conjugate in $\Gamma^*$
    \end{remark}

    \begin{corollary}
        If $\Gamma$ is inner ultrahomogeneous and $\Gamma^*$ is an atomic model of $\Th(\Gamma)$, then $\Gamma^*$ is inner ultrahomogeneous.
    \end{corollary}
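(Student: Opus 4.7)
The plan is to invoke the immediately preceding remark almost directly. Given any finite partial automorphism $p$ of $\Gamma^*$, list its domain as a tuple $a=(a_1,\ldots,a_n)$ and put $b=(p(a_1),\ldots,p(a_n))$; by definition of partial automorphism, $\qftp(a)=\qftp(b)$ in $\Gamma^*$. The goal is to produce $g\in\Gamma^*$ with $a_i^g=b_i$ for each $i$, which is exactly the assertion that $a,b$ are conjugate via a single element.

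Since $\Gamma^*$ is atomic, the complete type $\tp(a,b)$ is isolated by some formula $\theta(x,y)$. In particular the partial qftp condition ``$\qftp(x)=\qftp(y)$'' is implied by this isolated type, so the hypothesis of the preceding remark is met: the common quantifier-free type of $a$ and $b$ is captured by an isolated complete type. Applying that remark to $(a,b)$ directly yields the desired $g$.

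For completeness, one can unpack the remark as follows (and this is the only step where one really uses inner ultrahomogeneity of $\Gamma$): from $\Gamma\equiv\Gamma^*$ and $\Gamma^*\models\exists x,y\,\theta(x,y)$, there exist $a',b'\in\Gamma$ realising $\theta$, hence realising $\tp(a,b)$. In particular $\qftp(a')=\qftp(b')$ in $\Gamma$, so inner ultrahomogeneity of $\Gamma$ supplies $g'\in\Gamma$ with $(a_i')^{g'}=b_i'$. This is a first-order property of $(a',b')$ expressible by $\exists g\bigwedge_i x_i^g=y_i$, so it lies in $\tp(a',b')=\tp(a,b)$ and therefore holds of $(a,b)$ in $\Gamma^*$.

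There is essentially no obstacle: the whole content lies in recognising that atomicity is exactly what is needed to turn the observation of the preceding remark into a universal statement about all finite partial automorphisms of $\Gamma^*$. The only thing to be careful about is the formal reading of ``$\qftp(a)=\qftp(b)$ is isolated'' in the remark, but the natural interpretation (that $\tp(a,b)$ is isolated, which implies the qftp-coincidence) is precisely what atomicity provides.
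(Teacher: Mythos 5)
Your proof is correct and follows the same route as the paper, whose entire proof is ``Immediate by the preceding remark'': you apply that remark to the pair $(\mathrm{dom}\,p,\mathrm{rng}\,p)$, using atomicity to get isolation of the complete type of the pair. Your unpacking of the remark (transfer the isolated type down to $\Gamma$, conjugate there by inner ultrahomogeneity, and pull the existential conjugacy statement back up) is exactly the intended reading.
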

    \begin{proof}
        Immediate by the preceding remark.
    \end{proof}

    \begin{remark}
        It is possible for an inner ultrahomogeneous group to be atomic, for instance, this is true if it is locally finite.
    \end{remark}

    \subsection{Untameness}

    \begin{theorem}
        \label{thm:inf_exp_untame}
        Let $\Gamma$ be an inner ultrahomogeneous group of infinite exponent. Then:
        \begin{itemize}
            \item
            the formula $xy=yx$ has the independence property (in particular, it has the order property),
            \item
            for each $n$, the formula $\varphi(\bar x;y_1,y_2)=y_1^{x_1x_2\cdots x_n}\in C(y_2)$ has the $n$-independence property,
            \item
            the formula $C(x)\subseteq C(y)$ has the strict order property,
            \item
            the formula $x\in C(y_2)\setminus C(y_1)$ has the tree property of the second kind.
        \end{itemize}
    \end{theorem}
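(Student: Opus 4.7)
My plan is to work throughout in a sufficiently saturated $\Gamma^*\succeq\Gamma$ and to rely on two workhorses from earlier sections: Lemma~\ref{lem:centraliser_index} (and its ``furthermore'' generalisation), which gives the strict inclusion $C(g)\subsetneq C(g^n)$ of centralisers under powers; and Corollary~\ref{cor:indiscernible_strong}, which supplies in $\Gamma^*$ an infinite indiscernible set $X$ freely generating an abelian subgroup and admitting both (a) conjugation witnesses for arbitrary injective partial permutations of $X$ and (b) for every $f\colon X\to\bN$ a single $g\in\Gamma^*$ with $g\in C(x^{f(x)})\setminus C(x)$ whenever $f(x)\neq 1$ and $g\in C(x)$ otherwise. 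All four items draw on these, but TP$_2$ will be the delicate one.

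For the IP of $xy=yx$, I would enumerate $X=(x_i)_{i\in\bN}$ and, given $A\subseteq\bN$, apply clause (b) with $f(x_i)=1$ for $i\in A$ and $f(x_i)=2$ otherwise, obtaining $y\in\Gamma^*$ that commutes with $x_i$ iff $i\in A$. For the SOP of $C(x)\subseteq C(y)$, I would pick $h\in\Gamma^*$ of infinite order if $\Gamma$ is not torsion or, by Theorem~\ref{thm:char_of_finite_exp} and saturation, of order divisible by every power of $2$ if $\Gamma$ is torsion; iterating Lemma~\ref{lem:centraliser_index} yields the strictly increasing chain $C(h)\subsetneq C(h^2)\subsetneq C(h^4)\subsetneq\cdots$, and the reverse containment $C(h^{2^i})\subseteq C(h^{2^j})$ for $i\leq j$ is trivial since $h^{2^j}\in\langle h^{2^i}\rangle$.

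For the $n$-IP of $\varphi(\bar x;y_1,y_2)=y_1^{x_1\cdots x_n}\in C(y_2)$ I would reduce by compactness to finite patterns in $\Gamma$: for every $N$ and every $A\subseteq\{0,\dots,N\}^n$ pick $\{b_{\bar i}:\bar i\in\{0,\dots,N\}^n\}\subseteq\Gamma$ freely generating an abelian subgroup of sufficient order (available by Corollary~\ref{cor:trichotomy}), set $y_1\coloneqq b_{(0,\dots,0)}$, and let $\alpha_{k,j}$ be the automorphism of $\langle b_{\bar i}\rangle$ that swaps $0$ and $j$ in the $k$-th index coordinate. Inner ultrahomogeneity of $\Gamma$ gives $a_{k,j}\in\Gamma$ conjugating as $\alpha_{k,j}$, and induction on $n$ shows $y_1^{a_{1,i_1}\cdots a_{n,i_n}}=b_{(i_1,\dots,i_n)}$. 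A further application of inner ultrahomogeneity, this time to the automorphism of $\langle b_{\bar i}\rangle$ fixing $b_{\bar i}$ for $\bar i\in A$ and doubling (or inverting) it otherwise, produces $y_2\in\Gamma$ that commutes with $b_{\bar i}$ exactly for $\bar i\in A$; then $\varphi$ holds at $(a_{1,i_1},\dots,a_{n,i_n};y_1,y_2)$ iff $\bar i\in A$.

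For TP$_2$ of $x\in C(y_2)\setminus C(y_1)$, which I expect to be the genuinely delicate step, I would work in $\Gamma^*$, enumerate $(d_i)_{i\in\bN}\subseteq X$ (adjusting orders by saturation in the torsion case so that each $d_i$ has order divisible by every prime under consideration), and set $b_{i,j}\coloneqq d_i$ and $c_{i,j}\coloneqq d_i^{p_j}$, where $p_j$ denotes the $j$-th prime. Row $2$-inconsistency is immediate from Bézout: if $x$ commutes with both $d_i^{p_j}$ and $d_i^{p_{j'}}$ for distinct primes, then $\langle d_i^{p_j},d_i^{p_{j'}}\rangle\ni d_i$, so $x\in C(d_i)=C(b_{i,j})$, contradicting $\varphi(x;b_{i,j},c_{i,j})$. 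Path consistency for $\eta\colon\bN\to\bN$ follows by feeding $f(d_i)=p_{\eta(i)}$ (always $\neq 1$) into clause (b) of Corollary~\ref{cor:indiscernible_strong}, which produces a single $g\in\Gamma^*$ witnessing every formula along the path. The main obstacle is precisely this TP$_2$ construction: marrying row inconsistency with path consistency requires the combination of coprime prime exponents (so Bézout kills the rows) with the uniform single-element clause of Corollary~\ref{cor:indiscernible_strong} (which handles the paths).
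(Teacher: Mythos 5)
Your proof is correct, and for the two hardest items it coincides with the paper's own argument: the SOP chain $C(h^{2^n})\subsetneq C(h^{2^{n+1}})$ realised via Lemma~\ref{lem:centraliser_index} (with finite satisfiability by elements of order $2^N$ in the torsion case), and the TP$_2$ array $(d_i,d_i^{p_j})$ with B\'ezout giving row $2$-inconsistency and the third clause of Corollary~\ref{cor:indiscernible_strong} giving path consistency, are exactly what the paper does. Your IP argument is a harmless variant (the paper sends $a_{2n}\mapsto a_{2n+1}$ via Remark~\ref{rem:large_partial_aut} rather than using the $C(x^{f(x)})\setminus C(x)$ clause; both work). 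The one genuinely different piece is IP$_n$: the paper works in $\Gamma^*$ with an indiscernible set indexed by $\bZ^{\oplus\bN}\times\{0,1\}$ and uses translation witnesses $g_w$ with $a_{v,0}^{g_w}=a_{v+w,0}$, whereas you work with finite abelian pieces $\{b_{\bar\imath}:\bar\imath\in\{0,\dots,N\}^n\}$ inside $\Gamma$ itself, realise the coordinate swaps and the fix-or-invert map by inner ultrahomogeneity, and then appeal to compactness. Your version is more finitary and arguably more transparent; it does require the generators to have order greater than $2$ so that inverting actually moves them (available by Corollary~\ref{cor:trichotomy} in both the torsion and non-torsion cases, as you note), and the induction $y_1^{a_{1,i_1}\cdots a_{n,i_n}}=b_{(i_1,\dots,i_n)}$ checks out because each swap acts on a coordinate that is still $0$. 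The only imprecision worth flagging is in SOP: Lemma~\ref{lem:centraliser_index} is stated for elements of $\Gamma$, not of $\Gamma^*$, so in the torsion case you should phrase the step as finite satisfiability of the type $\{x^{2^k}\neq 1\}\cup\{C(x^{2^n})\subsetneq C(x^{2^{n+1}})\}$ in $\Gamma$ (witnessed by elements of order $2^N$) rather than ``iterating the lemma'' on an element of $\Gamma^*$ of infinite order; this is exactly how the paper handles it.
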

    \begin{proof}
        Recall that by Lemma~\ref{lem:inf_exp_indiscernible}, not having finite exponent is equivalent to having an infinite indiscernible set $X$ in $\Gamma$. Let $(a_n)_{n\in\bN}$ enumerate one. Fix a $\lvert\Gamma\rvert^+$ saturated $\Gamma^*\succeq\Gamma$.

        Then by Remark~\ref{rem:large_partial_aut}, every injective partial function $X\to X$ is extended by conjugation by some $\gamma\in\Gamma^*$.

        In particular, for each $A\subseteq \bN$, we can take $\gamma_A\in\Gamma^*$ such that $a_{2n}^{\gamma_A}=a_{2n}$ for $n\in A$ and $a_{2n}^{\gamma_A}=a_{2n+1}(\neq a_{2n})$ for $n\notin A$, thus showing IP for the formula $xy=yx$.

        For IP$_n$, let $(a_{v,i})_{v\in \bZ^{\oplus\bN},i\in\{0,1\}}$ enumerate an indiscernible set in $\Gamma$, as per Lemma~\ref{lem:inf_exp_indiscernible}. Via Remark~\ref{rem:large_partial_aut}, for each $A\subseteq \bZ^{\oplus\bN}$, let $g_A\in\Gamma^*$ be such that $a_{v,0}^{g_A}$ equals $a_{v,0}$ if $v\in A$, and $a_{v,1}$ otherwise, and for each $w\in \bZ^{\oplus\bN}$, let $g_w\in\Gamma^*$ be such that $a_{v,0}^{g_w}=a_{v+w,0}$.

        For each $i,j\in \bN^2$, let $g_{i,j}=g_{je_i}$, where $e_i$ is the $i$-th vector in the standard basis of $\bZ^{\oplus\bN}$.
        Then for any $i_0,\ldots,i_n\in\bN$ we have $a_{0,0}^{g_{0,i_0}g_{1,i_1}\cdots g_{n,i_n}}=a_{(i_0,i_1,\ldots,i_n,0,0,\ldots),0}$, and hence for $A\subseteq \bZ^{\oplus\bN}$ we have $a_{0,0}^{g_{0,i_0}g_{1,i_1}\cdots g_{n,i_n}}\in C(g_A)$ if and only if $(i_0,i_1\ldots,i_n,0,\ldots)\in A$. The conclusion follows.

        To see that $C(x)\subseteq C(y)$ has the strict order property, just note that we have a type
        \[
            p(x)=\{C(x^{2^n})\subsetneq C(x^{2^{n+1}})\mid n\in \bN\}
        \]
        This is consistent by Lemma~\ref{lem:centraliser_index}: if $\Gamma$ is not torsion, then this is simply realised by any element of $\Gamma$ infinite order. If $\Gamma$ is torsion, then any finite part is realised by an element of order $2^n$ for a sufficiently large $n$.

        Finally, for TP$_2$, fix a set as in Corollary~\ref{cor:indiscernible_strong}, enumerate it as $(g_n)_{n\in\bN}$. Let $(p_n)_{n\in\bN}$ be an enumeration of prime numbers. Then TP$_2$ for $x\in C(y_2)\setminus C(y_1)$ is witnessed by the array $((g_n,g_n^{p_m})_{n,m}$. Indeed, by Corollary~\ref{cor:indiscernible_strong}, for each $f\colon \bN\to \bN$ there is a $g$ such that for all $n$ we have $g\in C(g_n^{p_f(n)})\setminus C(g_n)$, so every path is consistent, but if $p,q$ are distinct primes, then $C(g_n^p)\cap C(g_n^q)=C(g_n)$, so every row is $2$-inconsistent.
    \end{proof}

    \begin{remark}
        In the language of groups, the formulas witnessing IP and IP$_n$ are positive quantifier-free, the formula witnessing TP$_2$ is quantifier-free, but not positive, while the formula witnessing SOP is universal and not positive (and not equivalent to any quantifier-free formula by Proposition~\ref{prop:no_qe}). By \cite[Theorem 4.2]{SU06}, SOP (or even SOP$_4$) cannot be witnessed by a quantifier-free formula, or even a quantifier-free type.
    \end{remark}

    \subsection{Non-rosiness}
    One might think that due to the existence of a stationary independence relation in some inner ultrahomogeneous groups (see Proposition~\ref{prop:free_stationary_indep}), there is a hope that they might be rosy (see \cite{CKP08} for the definition and more details). However, this is not true, at least for the groups of infinite exponent.

    \begin{corollary}
        If $\Gamma$ is inner ultrahomogeneous and not of finite exponent, then it is not rosy.
    \end{corollary}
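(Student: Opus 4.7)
The strategy is to leverage Theorem~\ref{thm:inf_exp_untame}, which furnishes a TP$_2$-witness in $\Th(\Gamma)$, and then appeal to the standard model-theoretic fact that rosy theories cannot support a formula with the tree property of the second kind. Concretely, the formula $\psi(x;y_1,y_2) \equiv x \in C(y_2)\setminus C(y_1)$ has TP$_2$ witnessed by the array $((g_n, g_n^{p_m}))_{n,m \in \bN}$, where $(g_n)_n$ is the indiscernible set produced by Corollary~\ref{cor:indiscernible_strong} and $(p_m)_m$ enumerates the primes; the rows are $2$-inconsistent because $C(g_n^{p})\cap C(g_n^{q}) = C(g_n)$ for distinct primes $p,q$, while each path is consistent by Corollary~\ref{cor:indiscernible_strong}.

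The content of the implication ``TP$_2 \Rightarrow$ not rosy'' is that in a rosy theory thorn-forking is a symmetric independence relation with finite local character, whereas a TP$_2$-array gives an explicit obstruction: along the columns one obtains arbitrarily large collections of instances of $\psi$ that are simultaneously realisable, yet any two instances within a single row are incompatible, producing a type of unbounded thorn-weight. This is incompatible with finite local character of $\mathop{\smile\!\!\!\!\!|}$-independence in $\Th(\Gamma)$.

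The plan therefore is: first, reproduce (or simply quote) the TP$_2$-witness from Theorem~\ref{thm:inf_exp_untame}; second, invoke the black-box result on rosy theories — for instance in \cite{CKP08} — stating that a TP$_2$-formula rules out rosiness. The only real obstacle is pinning down the correct citation; no new algebraic input is required, since all the combinatorial work has already been carried out in Theorem~\ref{thm:inf_exp_untame} and Corollary~\ref{cor:indiscernible_strong}.
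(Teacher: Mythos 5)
There is a genuine gap: your entire argument rests on the ``black-box result'' that a rosy theory cannot have TP$_2$, and no such theorem exists. Rosiness and NTP$_2$ are two different generalisations of simplicity and, as far as is known, neither implies the other; in particular there is no standard citation (certainly not \cite{CKP08}) asserting that a TP$_2$-formula rules out rosiness. Your heuristic justification does not repair this. First, þ-dividing requires $k$-inconsistency along \emph{all} conjugates of the parameter over some (extended) base, whereas a row of a TP$_2$-array is only $2$-inconsistent along the particular parameters $(g_n^{p_m})_m$ you chose --- these need not exhaust (or even approximate) an orbit, so no instance of strong dividing, hence no þ-forking, is produced. Second, ``unbounded thorn-weight'' is not in tension with rosiness: rosiness is \emph{local character} of þ-forking, and rosy (indeed simple, non-supersimple) theories can have types of infinite weight. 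The fact that the paper proves TP$_2$, SOP and IP$_n$ in Theorem~\ref{thm:inf_exp_untame} and then still gives a \emph{separate} argument for non-rosiness is itself a signal that no such implication is available.

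The paper's actual proof is group-theoretic and short: groups definable in rosy theories satisfy a uniform chain condition on uniformly definable subgroups (\cite[Proposition~1.3]{CKP08}), and Lemma~\ref{lem:centraliser_index} (together with Remark~\ref{rem:power_centraliser}, which upgrades proper containment to infinite index) produces an infinite chain of centralisers $C(g)\subsetneq C(g^2)\subsetneq C(g^4)\subsetneq\cdots$, each of infinite index in the next, violating that condition. If you want to salvage your approach you would need to replace ``TP$_2$ implies non-rosy'' by an argument of this kind, i.e.\ one that exhibits genuine þ-forking chains of unbounded length rather than a TP$_2$-pattern.
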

    \begin{proof}
        By Corollary~\ref{cor:indiscernible_strong} (or Lemma~\ref{lem:centraliser_index}), an inner ultrahomogeneous group does not satisfy the ucc condition of \cite[Proposition 1.3]{CKP08}. Hence, it is not rosy.
    \end{proof}

    \subsection{Straight maximality}
    Recall the following property of a first order theory, defined in e.g.\ \cite{Coo82} (as $[1]$) and in \cite[Definition 5.20]{Sh00}.
    \begin{definition}
        We say that a complete first order theory $T$ is \emph{straightly maximal} if here is a formula $\varphi(\bar x,\bar y)$ such that for every quantifier-free formula $\chi(z_1,\ldots,z_n)$ in the language of Boolean algebras and every (equivalently, some) $M\models T$, we can find $\bar b_1,\ldots, \bar b_n\in M$ such that $\mathcal P(M^{\bar x})\models \chi(\varphi(M,\bar b_1),\ldots,\varphi(M,\bar b_n))$.
    \end{definition}
    Note that straight maximality implies e.g.\ TP$_2$ and the strict order property, and true to its name, it is the strongest in a certain class of ``wildness'' conditions (it does not, however, seem to imply the $n$-independence property).

    We will use the following simple criterion for straight maximality.

    \begin{fact}
        \label{fct:str_max_char}
        $T$ is straightly maximal if and only if there is a formula $\varphi(\bar x,\bar y)$ such that for every $n\in\bN$, there are $n$ disjoint nonempty $\varphi$-definable sets such that the union of any number of them is $\varphi$-definable.
    \end{fact}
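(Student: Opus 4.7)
The plan is to use the elementary observation that the quantifier-free type of a tuple $(A_1,\ldots,A_n)$ in any Boolean algebra is determined by its \emph{atom pattern}, i.e.\ by the set $E\subseteq\{0,1\}^n$ of those $\epsilon$ with $A_1^{\epsilon_1}\cap\cdots\cap A_n^{\epsilon_n}\neq\emptyset$ (where $A^1=A$ and $A^0$ is its complement); this holds because every Boolean term in $z_1,\ldots,z_n$ has a canonical form as a union of atoms, so every quantifier-free formula reduces to a Boolean combination of atom-emptiness statements.

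For the forward direction, assume $\varphi(\bar x,\bar y)$ witnesses straight maximality and fix $n$. I will apply the definition to the quantifier-free Boolean formula $\chi$ in the $n+2^n$ variables $z_1,\ldots,z_n,(w_S)_{S\subseteq\{1,\ldots,n\}}$ asserting that the $z_i$ are pairwise disjoint and nonempty and that $w_S=\bigcup_{i\in S}z_i$ for each $S$. This $\chi$ is consistent in any infinite Boolean algebra, and $T$ must have infinite models (otherwise $\mathcal P(M^{\bar x})$ is finite and straight maximality fails as soon as $n$ exceeds its number of atoms); the tuples produced by the definition then give the $n$ disjoint nonempty $\varphi$-definable sets together with $\varphi$-definable witnesses for each of their subunions, as required.

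For the reverse direction, suppose the criterion holds for some $\varphi$, and fix a consistent quantifier-free Boolean $\chi(z_1,\ldots,z_n)$. I pick any realisation of $\chi$ in some Boolean algebra and let $E\subseteq\{0,1\}^n$ be its atom pattern. Applying the criterion with $\lvert E\rvert$ in place of $n$ yields disjoint nonempty $\varphi$-definable sets $(B_\epsilon)_{\epsilon\in E}$ all of whose subunions are $\varphi$-definable. Setting $A_i\coloneqq\bigcup\{B_\epsilon:\epsilon\in E,\ \epsilon_i=1\}$, each $A_i$ is $\varphi$-definable by some $\bar b_i$ (being such a subunion), the tuple $(A_1,\ldots,A_n)$ has atom pattern exactly $E$ by construction, and so it realises $\chi$ by the initial observation.

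The main point to verify is the atom-pattern invariance of quantifier-free types in Boolean algebras; this is standard but worth stating explicitly, and once it is in place the rest is bookkeeping, with the only minor subtlety being the passage, in the forward direction, to the case of infinite models of $T$.
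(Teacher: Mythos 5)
Your forward direction is fine, and the atom-pattern normal form is the right reduction: the quantifier-free type of $(A_1,\dots,A_n)$ in a Boolean algebra is indeed determined by which minterms $A_1^{\epsilon_1}\cap\dots\cap A_n^{\epsilon_n}$ are nonzero. The gap is in the reverse direction, and it sits precisely at the minterm indexed by $(0,\dots,0)$. With $A_i\coloneqq\bigcup\{B_\epsilon:\epsilon\in E,\ \epsilon_i=1\}$ you correctly get $A_1^{\epsilon_1}\cap\dots\cap A_n^{\epsilon_n}\supseteq B_\epsilon\neq\emptyset$ for $\epsilon\in E$ and emptiness of every other minterm having some coordinate equal to $1$; but every point of $M^{\bar x}\setminus\bigcup_{\epsilon\in E}B_\epsilon$ lies in $A_1^c\cap\dots\cap A_n^c$, and the criterion gives you no control over whether the $B_\epsilon$ cover $M^{\bar x}$. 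So whenever $(0,\dots,0)\notin E$, the atom pattern you actually produce is $E\cup\{(0,\dots,0)\}$, not $E$, and the claim ``the tuple has atom pattern exactly $E$ by construction'' fails. Concretely, $\chi(z_1)\coloneqq(z_1=1)$ is consistent with $E=\{(1)\}$, and your construction outputs $A_1=B_{(1)}$, which need not equal $M^{\bar x}$; for the formula $\varphi$ used in Theorem~\ref{thm:straight_maximality}, no instance $\varphi(g,\Gamma)$ is ever all of $\Gamma$, so the reverse direction as written does not handle any $\chi$ that constrains $z_1\cup\dots\cup z_n$ against the genuine top element of $\mathcal P(M^{\bar x})$.

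The repair uses the freedom (present in the statement, since both sides only assert the \emph{existence} of a formula) to change $\varphi$: pass to $\varphi'(\bar x;\bar y_1\bar y_2uv)$ whose instances are the sets $\varphi(M,\bar y_1)$ (when $u=v$) and their complements $\neg\varphi(M,\bar y_2)$ (when $u\neq v$). Given consistent $\chi$ with atom pattern $E\neq\emptyset$, fix $\delta^*\in E$, take disjoint nonempty $\varphi$-definable $B_\epsilon$ ($\epsilon\in E$) with all subunions $\varphi$-definable, and set $A_i$ to be $\bigcup\{B_\epsilon:\epsilon\neq\delta^*,\ \epsilon_i=1\}$ if $\delta^*_i=0$ and the complement of $\bigcup\{B_\epsilon:\epsilon\neq\delta^*,\ \epsilon_i=0\}$ if $\delta^*_i=1$. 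All of these are $\varphi'$-definable, the leftover set $M^{\bar x}\setminus\bigcup_{\epsilon\neq\delta^*}B_\epsilon$ (nonempty, as it contains $B_{\delta^*}$) realises exactly the minterm indexed by $\delta^*$, and now every point lies in the minterm prescribed by the unique $B_\epsilon$ or leftover piece containing it, so the atom pattern is exactly $E$. (A similar cosmetic adjustment handles the degenerate case where $\chi$ forces some $z_i=0$.) For comparison: the paper gives no proof of this fact, deferring entirely to \cite{Coo82}, so there is no in-text argument to measure yours against --- but the statement as given does require this complementation step, which your write-up is missing.
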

    \begin{proof}
        This is \cite[Lemma 3, Section 2.0]{Coo82} (and not hard to prove directly).
    \end{proof}

    \begin{lemma}
        \label{lem:odd_abelian_fix_elt}
        If $G$ is a finite abelian group of odd order and $g\in G$ does not generate $G$, then there is a nontrivial automorphism of $G$ which fixes $g$.
    \end{lemma}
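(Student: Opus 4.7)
The plan is to reduce to a single $p$-primary component and then construct the desired automorphism as a nilpotent perturbation of the identity. Writing $G = \bigoplus_q G_q$ as its primary decomposition and $g = \sum_q g_q$ accordingly, one has $\langle g \rangle = \bigoplus_q \langle g_q \rangle$, so the hypothesis $\langle g \rangle \neq G$ becomes $\langle g_p \rangle \neq G_p$ for some prime $p$ (and $G_p$ is necessarily nontrivial; also $p$ is odd since $|G|$ is). Any nontrivial $\sigma_p \in \Aut(G_p)$ fixing $g_p$ extends, by the identity on the other $G_q$'s, to the desired automorphism of $G$, so it suffices to work inside $G_p$.

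If $g_p = 0$, the map $x \mapsto -x$ on $G_p$ is a nontrivial automorphism (as $p$ is odd and $G_p \neq 0$) and fixes $g_p$. Otherwise, I would set $K = \langle g_p \rangle$, a nontrivial \emph{proper} subgroup of $G_p$. Both $K$ and $G_p/K$ are then nontrivial finite abelian $p$-groups, so each has a cyclic subquotient of order $p$; composing a surjection $G_p/K \twoheadrightarrow \bZ/p\bZ$ with an inclusion $\bZ/p\bZ \hookrightarrow K$ produces a nonzero homomorphism $\iota \colon G_p/K \to K$. Letting $\pi \colon G_p \to G_p/K$ be the quotient map, the composition $\phi = \iota \circ \pi$ is a nonzero endomorphism of $G_p$ with $\phi(g_p) = 0$ (because $g_p \in \ker \pi$) and $\phi^2 = 0$ (because $\im \iota \subseteq K = \ker \pi$). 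Then $\sigma = \id + \phi$ is an automorphism of $G_p$ (with inverse $\id - \phi$) which is nontrivial and fixes $g_p$.

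The only delicate point will be guaranteeing a nonzero $\iota$ in the $g_p \neq 0$ case, which is exactly where I need both $K$ and $G_p/K$ to be nontrivial $p$-groups (so that each has nontrivial $p$-torsion). The oddness of $|G|$ is used in exactly one place, namely to make inversion nontrivial in the $g_p = 0$ case (equivalently, to rule out the offending possibility $G_p = \bZ/2\bZ$).
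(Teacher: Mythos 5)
Your proof is correct, but it takes a genuinely different route from the paper's. The paper splits on whether the order of $g$ equals the exponent of $G$: if so, $\langle g\rangle$ is a direct factor and one inverts a (nontrivial, odd-order) complement; if not, some cyclic prime-power direct factor $G_1$ receives a non-surjective projection of $\langle g\rangle$, and the power map $x\mapsto x^{d+1}$ on $G_1$ (where $d$ is the order of the projection of $g$, so $p\nmid d+1$) does the job. You instead reduce via the primary decomposition to a single $G_p$ with $\langle g_p\rangle\subsetneq G_p$ and, in the main case $g_p\neq 0$, build a unipotent automorphism $\id+\iota\circ\pi$ with $(\iota\circ\pi)^2=0$ -- a transvection rather than a power map. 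Both arguments are elementary and complete; all your steps check out ($\langle g\rangle=\bigoplus_q\langle g_q\rangle$, existence of a nonzero $\iota\colon G_p/K\to K$ from nontriviality of both $p$-groups, invertibility of $\id+\phi$ when $\phi^2=0$). Your version has the merit of isolating exactly where oddness is used (only to make inversion nontrivial when $g_p=0$; the transvection case works for $p=2$ as well), and it avoids invoking the fact that a cyclic subgroup of maximal order is a direct factor, needing only the primary decomposition. The paper's version is slightly shorter and its automorphisms are perhaps more concrete, but there is nothing to repair in yours.
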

    \begin{proof}
        If the order of $g$ equals the exponent of $G$, then $\langle g\rangle$ is a direct factor. Since it is not equal to $G$, we have an automorphism which fixes it and inverts the group complementing it (which is of odd order greater than $1$, so this is nontrivial).

        Otherwise, if we decompose $G$ into direct sum of cyclic groups of prime order, the projection of $\langle g\rangle$ into one of them, say $G_1$, is not onto.

        Write $G=G_1\times G_2$ and $g=g_1+g_2$ where $g_1\in G_1$, $g_2\in G_2$. Then $g_1$ does not generate $G_1$ and $G_1$ is a $p$-group for an odd prime $p$.

        Let $d$ be the order of $g_1$. Then $d=1$ or $p$ divides $d$, so $p$ does not divide $d+1$. It follows that $G_1$ has a nontrivial automorphism given by $x\mapsto x^{d+1}$ which fixes $g_1$, which we can extend by identity on $G_2$, yielding a nontrivial automorphism fixing $g$.
    \end{proof}

    \begin{proposition}
        \label{prop:odd_cycle_definability}
        Suppose $\Gamma$ is inner ultrahomogeneous and $g\in\Gamma$ is of finite odd order. Then
        \[
            \langle g\rangle = \{h^2\mid h\in C^2(g)\}
        \]
    \end{proposition}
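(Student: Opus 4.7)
For the forward inclusion $\langle g\rangle\subseteq\{h^2\mid h\in C^2(g)\}$, I would observe that $g\in C^2(g)$ (every element commutes with its own centraliser), so $\langle g\rangle\leq C^2(g)$; since $\lvert g\rvert$ is odd, squaring is a bijection on $\langle g\rangle$, giving the inclusion.

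For the reverse, I plan to argue contrapositively: given $h\in\Gamma$ with $h^2\notin\langle g\rangle$, I will produce some $k\in C(g)$ with $h^k\neq h$, witnessing $h\notin C^2(g)$. If $h$ does not commute with $g$ already, $k\coloneqq g$ works, so I may assume $A\coloneqq\langle g,h\rangle$ is abelian. The whole problem then reduces to exhibiting an automorphism $\sigma\in\Aut(A)$ with $\sigma(g)=g$ and $\sigma(h)\neq h$: setting $h'\coloneqq\sigma(h)\in A\subseteq\Gamma$, the pairs $(g,h)$ and $(g,h')$ have the same quantifier-free type in $\Gamma$ (via $\sigma$ viewed as an isomorphism of subgroups), so inner ultrahomogeneity yields $k\in\Gamma$ with $g^k=g$ and $h^k=h'\neq h$; then $k\in C(g)$ fails to centralise $h$, as required.

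To construct $\sigma$, I would do a short case split on the order of $h$. If $h$ has infinite order, then $A\cong\langle g\rangle\oplus\langle h\rangle$ (torsion vs.\ free part), and $\sigma$ inverting $\langle h\rangle$ and fixing $\langle g\rangle$ does the job. If $h$ has finite order, then $A$ is finite abelian; decompose $A=A_2\oplus A_{\mathrm{odd}}$ into its $2$-Sylow and odd complement, and write $h=h_2+h_{\mathrm{odd}}$, noting that $g\in A_{\mathrm{odd}}$. When $\lvert h_2\rvert>2$, the map inverting $A_2$ and fixing $A_{\mathrm{odd}}$ pointwise fixes $g$ and moves $h$. Otherwise $2h_2=0$, so the hypothesis $h^2\notin\langle g\rangle$ forces $h_{\mathrm{odd}}\notin\langle g\rangle$ (squaring is bijective on $A_{\mathrm{odd}}$), reducing matters to the finite odd-order abelian case.

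This remaining odd-abelian subproblem---given finite abelian $B$ of odd order and $g,h\in B$ with $h\notin\langle g\rangle$, find $\sigma\in\Aut(B)$ fixing $g$ and moving $h$---I would tackle via the primary decomposition $B=\bigoplus_p B_p$, reducing to a single $p$-group ($p$ odd). In the $p$-group, if $\lvert g\rvert\cdot h\neq 0$ the scalar automorphism $x\mapsto(1+\lvert g\rvert)x$ does the job; otherwise $h$ lies in the $\lvert g\rvert$-torsion and one chooses a suitable cyclic decomposition $B_p=\bigoplus_i\langle e_i\rangle$ together with an ``elementary matrix'' automorphism that alters the image of a basis element whose $h$-coefficient is nonzero but whose $g$-coefficient is zero (arguing first that such a basis element must exist when $\lvert h\rvert\leq\lvert g\rvert$ and $h\notin\langle g\rangle$). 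I expect this last $p$-group construction---selecting the right cyclic decomposition and the correct basis move in each subcase---to be the main technical obstacle; everything preceding it is essentially bookkeeping.
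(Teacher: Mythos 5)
Your overall architecture is sound and in fact closely mirrors the paper's own proof: reduce to producing an automorphism of $A=\langle g,h\rangle$ that fixes $g$ and moves $h$ (or a suitable component of $h$), split on whether $h$ has infinite order, nontrivial $2$-part of order greater than $2$, or essentially odd order, and then use inner ultrahomogeneity to convert that automorphism into an element of $C(g)$ not commuting with $h$. The forward inclusion, the contrapositive set-up, the infinite-order case ($\langle g\rangle\cap\langle h\rangle=1$ so $A\cong\langle g\rangle\oplus\langle h\rangle$ and one inverts the free factor) and the $2$-part case are all correct, and they correspond exactly to the paper's cases $n=\infty$, $4\mid n$, and ``$h^2$ of odd order''.

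The gap is exactly where you flag it: the final odd-abelian subproblem is asserted but not proved, and you have also made it strictly harder than necessary. You ask, for an \emph{arbitrary} finite abelian $B$ of odd order, for an automorphism fixing $g$ and moving a prescribed $h\notin\langle g\rangle$; after the primary decomposition, the case $\lvert g\rvert\cdot h=0$ really does require choosing the cyclic decomposition compatibly with $g$ (for $B_p=(\bZ/p\bZ)^2$, $g=(1,1)$, $h=(1,0)$, the standard basis has no vector with nonzero $h$-coefficient and zero $g$-coefficient, while the basis $(1,1),(0,1)$ does), and you have not carried out that selection or the order bookkeeping for the elementary perturbation. The fix is to notice that in your situation $B=A_{\mathrm{odd}}=\langle g,h_{\mathrm{odd}}\rangle$, so any automorphism of $B$ fixing both $g$ and $h_{\mathrm{odd}}$ is the identity; hence it suffices to produce \emph{some} nontrivial automorphism of $B$ fixing the non-generator $g$. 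That weaker statement is precisely Lemma~\ref{lem:odd_abelian_fix_elt} of the paper, whose proof is short: either $\langle g\rangle$ is a direct factor (invert a complement), or some cyclic primary factor is not exhausted by the corresponding component $g_1$ of $g$, and $x\mapsto (d+1)x$ on that factor (with $d$ the order of $g_1$) is a nontrivial automorphism fixing $g_1$, extended by the identity elsewhere. With that substitution your argument closes and coincides with the paper's.
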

    \begin{proof}
        $\subseteq$ is clear since the order of $\langle g\rangle$ is odd, so squaring is bijective. For $\supseteq$, let $h\in C^2(g)$.

        Note that this implies that any finite partial automorphism of $\langle h,g\rangle$ which fixes $g$ must fix its entire domain. Furthermore, this implies also that for every $k\in\bZ$ we have $h^k\in C^2(g)$. Write $n$ for the order of $h$.

        If $n=\infty$, then $\langle g,h\rangle\cong \langle g\rangle\times \langle h\rangle$, so $\langle g,h\rangle$ has a nontrivial automorphism which fixes $g$ and inverts $h$, which is a contradiction.

        If $n$ is divisible by $4$, then similarly, $g$ generates a direct factor of $\langle g,h^{n/4}\rangle$, and inverting $h^{n/4}$ yields a nontrivial automorphism fixing $g$.

        Thus, the order of $h^2$ is finite and odd. But then $\langle g,h^2\rangle$ is abelian of odd order. Since every automorphism fixing $g$ is trivial, by Lemma~\ref{lem:odd_abelian_fix_elt}, it follows that $\langle g,h^2\rangle=\langle g\rangle$, so $h^2\in\langle g\rangle$.
    \end{proof}

    \begin{corollary}
        \label{cor:prime_multiples}
        There is a formula $\varphi(x,y)$ such that for any $g\in \Gamma$ of finite odd order we have $\varphi(g,h)$ if and only if $h\in \langle g\rangle$ and the order of $h$ is prime.
    \end{corollary}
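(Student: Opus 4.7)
The plan is to bootstrap directly from Proposition~\ref{prop:odd_cycle_definability}. That proposition tells us that for $g \in \Gamma$ of finite odd order, the cyclic subgroup $\langle g\rangle$ is definable by the formula
\[
    \psi(x,y)\;\equiv\;\exists h\,\bigl(h\in C^2(x)\;\wedge\;h^2=y\bigr),
\]
where ``$h\in C^2(x)$'' itself unpacks to the first-order statement $\forall k\,(kx=xk\to kh=hk)$. So I first record that $\psi(g,\cdot)$ correctly cuts out $\langle g\rangle$ whenever $g$ has finite odd order.

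Next, I would translate the number-theoretic condition ``order of $h$ is prime'' into a statement about the lattice of subgroups of $\langle g\rangle$. In any cyclic group, an element has prime order iff it is nontrivial and generates a minimal nontrivial subgroup; equivalently, $h$ has prime order iff $h\neq 1$ and every nonidentity element $z\in\langle h\rangle$ satisfies $\langle z\rangle=\langle h\rangle$, i.e.\ $h\in\langle z\rangle$. The key observation is that if $h\in\langle g\rangle$, then $h$ itself has finite odd order (dividing that of $g$), so Proposition~\ref{prop:odd_cycle_definability} applies equally to $h$ and we may use the \emph{same} formula $\psi$ to define both $\langle h\rangle$ and, for any $z$ in that subgroup, $\langle z\rangle$.

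Putting these together, I would define
\[
    \varphi(x,y)\;\equiv\;\psi(x,y)\;\wedge\;y\neq 1\;\wedge\;\forall z\,\bigl(\psi(y,z)\wedge z\neq 1\;\to\;\psi(z,y)\bigr).
\]
For $g$ of finite odd order, $\psi(g,h)$ forces $h\in\langle g\rangle$ and hence $h$ of finite odd order; then $\psi(h,z)$ defines $\langle h\rangle$ and $\psi(z,h)$ defines $\langle z\rangle$, so the last conjunct says precisely that every nontrivial $z\in\langle h\rangle$ generates $\langle h\rangle$, which together with $h\neq 1$ is equivalent to the order of $h$ being prime. Conversely, if $h\in\langle g\rangle$ has prime order, all three conjuncts hold.

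There is no real obstacle here beyond bookkeeping: the only subtlety is making sure that when we invoke $\psi$ on the pair $(y,z)$ or $(z,y)$ we are still in the regime where Proposition~\ref{prop:odd_cycle_definability} gives the correct meaning, which is automatic because finite odd order is inherited by elements and powers inside $\langle g\rangle$.
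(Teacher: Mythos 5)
Your proposal is correct and coincides with the paper's own proof: the paper likewise takes $\psi(x,y)$ expressing that $y$ is a square of an element of $C^2(x)$ and sets $\varphi(x,y)=\psi(x,y)\wedge\chi(y)$ where $\chi(y)$ is exactly your conjunct $y\neq 1\wedge\forall z\,(\psi(y,z)\wedge z\neq 1\to\psi(z,y))$. Your observation that finite odd order is inherited by elements of $\langle g\rangle$, so Proposition~\ref{prop:odd_cycle_definability} applies at each level, is precisely the point the paper leaves implicit.
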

    \begin{proof}
        Let $\psi(x,y)$ be a formula expressing that $y$ is the square of an element of $C^2(x)$ (so that for $g\in\Gamma$ of odd order we have, per  Proposition~\ref{prop:odd_cycle_definability},  $\psi(g,\Gamma)=\langle g\rangle$). Let $\chi(x)$ be the formula saying that $x\neq 1\land \forall y(\psi(x,y)\land y\neq 1\rightarrow \psi(y,x))$, so that for $g\in\Gamma$ of odd order, $\Gamma\models\chi(g)$ if and only if the order of $g$ is prime.
        Then it is not hard to see that $\varphi(x,y)=\psi(x,y)\land \chi(y)$ works.
    \end{proof}

    \begin{theorem}
        \label{thm:straight_maximality}
        Suppose $\Gamma$ is inner ultrahomogeneous and for each $n$, there is a $g\in \Gamma$ of finite order with at least $n$ distinct prime divisors. Then $\Gamma$ is straightly maximal.
    \end{theorem}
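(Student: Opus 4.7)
The plan is to invoke Fact~\ref{fct:str_max_char} with the formula $\varphi(x,y)$ furnished by Corollary~\ref{cor:prime_multiples}, which for $g$ of finite odd order picks out the prime-order elements of $\langle g\rangle$. Fix $n\in\bN$. First, I would use the hypothesis (applied with $n+1$ in place of $n$) to obtain some $h\in\Gamma$ of finite order whose order has at least $n+1$ distinct prime divisors, and then replace $h$ by its odd-part $g\coloneqq h^{2^k}$, where $k$ is the $2$-adic valuation of the order of $h$; this yields $g\in\Gamma$ of finite odd order with at least $n$ distinct (odd) prime divisors $p_1,\ldots,p_n$. Let $N$ denote the order of $g$.

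Next, for each subset $S\subseteq\{1,\ldots,n\}$, I would set
\[
    g_S \coloneqq g^{N/\prod_{i\in S}p_i},
\]
which has finite odd order $\prod_{i\in S}p_i$. By Corollary~\ref{cor:prime_multiples}, $\varphi(g_S,\Gamma)$ is exactly the set of prime-order elements of $\langle g_S\rangle$; since this cyclic group has squarefree order, that set equals $\bigcup_{i\in S}A_i$, where $A_i\subseteq\langle g\rangle$ is the set of elements of order $p_i$.

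The verification of Fact~\ref{fct:str_max_char} is then essentially a piece of bookkeeping: the sets $A_1,\ldots,A_n$ are pairwise disjoint (distinct orders), each is nonempty (e.g.\ $g^{N/p_i}\in A_i$) and $\varphi$-definable as $\varphi(g_{\{i\}},\Gamma)$, and for every $S$ the union $\bigcup_{i\in S}A_i$ is $\varphi$-definable as $\varphi(g_S,\Gamma)$. I do not anticipate any real obstacle in the argument itself — all the genuine content (chiefly the definability of $\langle g\rangle$ via $C^2$ in Proposition~\ref{prop:odd_cycle_definability}, refined to prime-order elements in Corollary~\ref{cor:prime_multiples}) is already in place. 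The only subtlety worth flagging is the need to pass from $h$ to its odd-part, since Corollary~\ref{cor:prime_multiples} requires odd order; this costs at most one prime divisor (the prime $2$), which is why I start from $n+1$ primes rather than $n$.
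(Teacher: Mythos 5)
Your proposal is correct and follows essentially the same route as the paper: both invoke Corollary~\ref{cor:prime_multiples} to get the formula $\varphi$, take an element of odd order with $n$ distinct prime divisors (absorbing the possible factor of $2$ by starting from $n+1$ primes), and realise the Boolean combinations required by Fact~\ref{fct:str_max_char} as the sets of prime-order elements of the cyclic subgroups $\langle g_S\rangle$. The only cosmetic difference is that the paper normalises $g$ to have squarefree order from the start, whereas you only need squarefreeness for the parameters $g_S$; both work.
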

    \begin{proof}
        Let $\varphi(x,y)$ be the formula as in Corollary~\ref{cor:prime_multiples}.
        Note that the hypothesis of the theorem implies that for each $n$, we can find a set $P$ of $n$ odd primes and an element $g\in\Gamma$ of order $\prod P$. Clearly, for any $A\subseteq P$ if we put $g_A=g^{\prod (P\setminus A)}$, then the order of $g_A$ is $\prod A$, and $\varphi(g_A,\Gamma)=\{h\in \langle g\rangle\mid \textrm{the order of }h\textrm{ is in }A\}$. The conclusion follows by Fact~\ref{fct:str_max_char}.
    \end{proof}

    It may be interesting to ask whether the hypotheses of Theorem~\ref{thm:straight_maximality} are necessary. For example:
    \begin{question}
        Suppose $\Gamma$ is infinite torsion-free inner ultrahomogeneous. Can $\Gamma$ be straightly maximal?
    \end{question}

    \section{Ample generic automorphisms}

    In this section, we provide simple sufficient conditions for the ampleness of generic automorphisms of an inner ultrahomogeneous group, generalising known results about Hall's universal group.

    \begin{definition}
        Fix a topological group $G$. A \emph{generic $n$-tuple} in $G$ is an $n$-tuple $\bar g\in G^n$ such that $\bar g^G$ (the orbit of diagonal action of $G$ via conjugation) is comeagre in $G^n$.
        We say that $G$ has \emph{ample generics} if for each $n\in \bN$, it has a generic $n$-tuple.

        If $M$ is a first order structure, then we say that $M$ has \emph{ample generic automorphisms} if $\Aut(M)$ (with the usual group topology, given by stabilisers of finite sets) has ample generics.
    \end{definition}

    The main tool in the proof will be the following lemma.
    \begin{lemma}
        \label{lem:eppa_amalgam}
        Suppose $\cK$ is a hereditary class of groups with inner EPPA and AP. Suppose furthermore that $\cK$ is closed under $\times$ or under $\times\bZ$.

        Let $A\leq B,C$ be $\cK$-groups. Fix partial automorphisms $p_1,p_2,\ldots,p_n$ of $B$ and $q_1,\ldots,q_n$ of $C$, such that for each $j$ we have $p_j\restr_{A}=q_j\restr_A\in \Aut(A)$.

        Then there is an amalgam $D\in \cK$ of $B,C$ over $A$ and elements $g_1,\ldots,g_n\in D$ such that (identifying $B,C$ with subgroups of $D$ containing $A$) for each $b\in B, c\in C$ and $j$ we have $b^{g_j}=p_j(b)$ and $c^{g_j}=q_j(c)$.
    \end{lemma}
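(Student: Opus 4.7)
The plan is to proceed by induction on $n$. The base case $n=0$ is simply the amalgamation property: AP provides a $\cK$-amalgam $D$ of $B,C$ over $A$.

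For the inductive step, suppose we already have $D_{n-1}\in\cK$ amalgamating $B,C$ over $A$, together with $g_1,\dots,g_{n-1}\in D_{n-1}$ realising the first $n-1$ pairs of partial automorphisms. To produce $g_n$, I would first apply inner EPPA separately to $B$ and to $C$ to obtain $B_+\in\cK$ with $B\leq B_+$ and $b_n\in B_+$ satisfying $b^{b_n}=p_n(b)$ for $b\in\dom p_n$, and similarly $C_+\in\cK$ with $c_n\in C_+$ realising $q_n$. Then conjugation by $b_n$ is a full inner automorphism of $B_+$ extending $p_n$, and similarly for $c_n$ on $C_+$; crucially, these two inner automorphisms agree on $A$, since both restrict there to the common $p_n|_A=q_n|_A\in\Aut(A)$. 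Next, I would use AP repeatedly (combining $D_{n-1}$, $B_+$ and $C_+$) to obtain a single $D_n'\in\cK$ in which $B_+$ and $C_+$ embed with intersection $A$. The combined map $\tau_n\colon B_+\cup C_+\to B_+\cup C_+\subseteq D_n'$ given by $x\mapsto x^{b_n}$ on $B_+$ and $y\mapsto y^{c_n}$ on $C_+$ is well-defined thanks to the agreement on $A$. Applying inner EPPA to $D_n'$ with $\tau_n$ (restricted to finite generators of $B_+$ and $C_+$) then produces $g_n$ in some $D_n\in\cK$ extending $D_n'$, and by construction $b^{g_n}=b^{b_n}=p_n(b)$ for $b\in\dom p_n$ and $c^{g_n}=c^{c_n}=q_n(c)$ for $c\in\dom q_n$, as required.

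The crux of the argument is to ensure that $\tau_n$ really is a partial automorphism of $D_n'$, i.e., that it preserves all quantifier-free relations among its domain elements that happen to hold in $D_n'$. This is not automatic in a general $\cK$-amalgam: for instance, in the finite amalgam of two copies of $\bZ/3\bZ$ sitting as distinct $3$-cycles inside $S_4$, their product has order $2$, a cross-relation that is not respected by inverting only one of the two generators. The hypothesis of closure under $\times$ or $\times\bZ$ is precisely what lets us sidestep such obstructions: by first enlarging $B_+$ and $C_+$ by central factors (either by a $\bZ$ factor or by a common ``padding'' $\cK$-group) before invoking AP, one can arrange the final amalgam to be sufficiently free for $\tau_n$ to extend. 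Carrying this out carefully — coordinating the enlargements on both sides so that the central factors line up and so that $B_+,C_+$ still embed with intersection $A$ in $D_n'$ — is the main obstacle; the real content of the proof is in combining AP, inner EPPA and the closure hypothesis in the right order.
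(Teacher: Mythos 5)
You have correctly located the crux --- that the pasted map $\tau_n$ (conjugation by $b_n$ on $B_+$, by $c_n$ on $C_+$) need not be a partial automorphism of an arbitrary amalgam $D_n'$ produced by AP --- and your $S_4$ example is exactly the right kind of obstruction. But the proposal does not resolve it, and the suggested fix does not work as described. Padding $B_+$ and $C_+$ by central factors before invoking AP gives you no control whatsoever over the cross-relations between $b_n$ and elements of $C_+$ inside whatever amalgam AP happens to hand you; AP only guarantees \emph{some} amalgam over $A$, not a ``sufficiently free'' one. Your strategy would go through if $\cK$ were closed under finitary amalgamated free products (then $\tau_n$ pastes to an automorphism of $B_+ *_A C_+$ by the universal property, since both conjugations restrict to $p_n\restr_A=q_n\restr_A$ on $A$), but that is a strictly stronger hypothesis than the one given, and closure under $\times$ or $\times\bZ$ does not supply it.

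The paper's proof sidesteps the problem by changing \emph{what is amalgamated over}, rather than trying to extend a partial automorphism after amalgamating over $A$. It first produces all the witnesses at once: in the $\times\bZ$ case, it passes to $B\times\bZ^2$ and extends each $p_k$ by a generator $f_k$ of a free subgroup of $\Aut(\bZ^2)$, so that the resulting witnesses $b_1,\dots,b_n$ freely generate a free group meeting $B$ trivially; hence $\langle A,b_1,\dots,b_n\rangle\cong A\rtimes F_n$ with the action determined only by $p_k\restr_A$. Doing the same on the $C$ side gives $\langle A,c_1,\dots,c_n\rangle$ isomorphic to the \emph{same} group $A\rtimes F_n$ over $A$ with $b_k\mapsto c_k$ (since $p_k\restr_A=q_k\restr_A$). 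One then applies AP to amalgamate $\bar B$ and $\bar C$ over this larger common subgroup, which forces $b_k$ and $c_k$ to become the same element $g_k$ of the amalgam --- no further extension of partial automorphisms is needed, so the uncontrollability of AP is harmless. (A separate diagonal construction handles the case of closure under $\times$.) This identification of the witnesses inside the amalgamation base is the idea missing from your argument; note also that it is intrinsically a simultaneous construction in all $n$ witnesses, since the canonical form $A\rtimes F_n$ of the amalgamation base depends on the whole tuple being free over $A$.
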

    \begin{proof}
        Let us first prove the following Claim:
        \begin{clm}
            If $\cK$ is closed under $\times\bZ$, then for each $G\in \cK$ and $n$-tuple $p_1,\ldots, p_n$ of partial automorphisms of $G$, we can find $H\geq G$ in $\cK$ and $h_1,\ldots,h_n\in H$ which witness inner EPPA for $p_1,\ldots,p_n$ and freely generate a subgroup of $H$ which intersects $G$ trivially.
        \end{clm}
        \begin{clmproof}
            By hypothesis, $G\times \bZ^2\in\cK$. Fix $f_1,\ldots,f_n\in \Aut(\bZ^2)$ which freely generate a subgroup of $\Aut(\bZ^2)$. Then for each $k$, $p_k\cup f_k$ is a finite partial automorphism of $G\times \bZ$. If we take $H\geq G\times \bZ$ with witnesses $h_1,\ldots,h_n\in H$ of inner EPPA for $p_1\cup f_1,\ldots,p_n\cup f_n$, then it is easy to see that they work.
        \end{clmproof}

        \begin{clm}
            For any $A,B,C,p_k,q_k$ as in the statement of the theorem, we can find groups $\bar B\geq B$, $\bar C\geq C$, and for $k=1,\ldots, n$ some $b_k\in \bar B$, $c_k\in \bar C$ which are witnesses of inner EPPA for $p_k$ and $q_k$, respectively, and such that
            \[
                \langle A,b_1,\ldots, b_n\rangle\cong \langle A,c_1,\ldots, c_n\rangle
            \]
            via an isomorphism fixing $A$ and sending $b_k$ to $c_k$ for $k=1,\ldots,n$.
        \end{clm}
        \begin{clmproof}
            If $\cK$ is closed under $\times\bZ$, then first, apply the preceding claim to $G=B$ and $p_1,\ldots,p_n$, and put $\bar B=H$, $b_k=h_k$ for $k=1,\ldots, n$. Then, since $\langle b_1,\ldots,b_n\rangle$ normalizes $A$ and intersects it trivially, $\langle A,b_1,\ldots,b_n\rangle\cong A\rtimes F_n$ (where $F_n$ is the free group of rank $n$), where the action is given by $a^{b_k}=p_k(a)$.

            Since we can repeat the same procedure for $C$, and for $a\in A$ we have $p_k(a)=q_k(a)$, the conclusion follows.

            Now, if $\cK$ is closed under $\times$, consider $B\times C$. Put $A'=A\times \{1\}$, $A''=\{1\}\times A$. Then for each $k$, $p_k\times q_k$ is a partial automorphism of $B\times C$. Let $G\geq B\times C$ and $g_1,\ldots,g_n\in G$ be the corresponding witnesses of inner EPPA. Put $H=G\times \langle g_1,\ldots, g_n\rangle$.

            Finally, put $\bar B=\langle B\times \{1\}^2, (g_1,g_1),\ldots, (g_n,g_n)\rangle\leq H$ and similarly, $\bar C=\langle \{1\}\times C\times \{1\}, (g_1,g_1),\ldots(g_n,g_n)\rangle\leq H$ (identified with supergroups of $B$ and $C$ respectively), and for $k=1,\ldots,n$ put $b_k=c_k=(g_k,g_k)$. Again, the group generated by $(g_k,g_k)$, $k=1,\ldots, k$ normalises and intersects trivially both $A'\times \{1\}$ and $A''\times \{1\}$ (i.e. the copies of $A$ in $\bar B$ and $\bar C$, respectively), we conclude as in first two paragraphs.
        \end{clmproof}
        Finally, to prove the lemma, take $\bar B$ and $\bar C$ as in Claim 2. Then if we take for $D$ their amalgam along $\langle A,b_1,\ldots, b_n\rangle\cong \langle A,c_1,\ldots, c_n\rangle$, the conclusion is clearly satisfied.
    \end{proof}

    The following proposition basically says that inner EPPA implies ($n$-)EPPA.
    \begin{proposition}
        \label{prop:n-eppa}
        If $\Gamma$ is inner ultrahomogeneous, $A\leq \Gamma$ is finitely generated and $p_1,\ldots,p_n$ is an $n$-tuple of partial automorphisms of $A$, then there is $B\leq \Gamma$ containing $A$ and $\sigma_1,\ldots,\sigma_n\in \Aut(B)$ such that $\sigma_i$ extends $p_i$ for $i=1,\ldots,n$.
    \end{proposition}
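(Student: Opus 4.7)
The plan is to realise each $p_i$ as the restriction of an inner automorphism of $\Gamma$, and then to build $B$ as the smallest subgroup of $\Gamma$ that contains $A$ and is invariant under all these inner automorphisms simultaneously. Concretely, first apply inner ultrahomogeneity to each $p_i$ separately to obtain $g_i\in\Gamma$ with $a^{g_i}=p_i(a)$ for all $a\in\dom p_i$. The naïve candidate $B=\langle A,g_1,\ldots,g_n\rangle$ does not work because conjugation by $g_i$ need not preserve $A$ (the $p_i$'s are only partial), so instead take $W\coloneqq \langle g_1,\ldots,g_n\rangle\leq \Gamma$ and
\[
    B\coloneqq \langle A^W\rangle = \langle\{a^w\mid a\in A,\, w\in W\}\rangle\leq \Gamma.
\]

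Then $A\leq B$ (take $w=1$), and since $g_i\in W$, each map $a^w\mapsto a^{wg_i}$ permutes the generating set $A^W$ of $B$, so conjugation by $g_i$ (an automorphism of $\Gamma$) restricts to an automorphism $\sigma_i$ of $B$; by construction $\sigma_i$ agrees with $p_i$ on $\dom p_i\subseteq A$. There is no real obstacle: the whole point is that closing $A$ under the $W$-action (rather than merely adjoining the $g_i$) is precisely what is needed to turn the inner automorphisms of $\Gamma$ induced by $g_1,\ldots, g_n$ into honest automorphisms of a common supergroup $B\leq\Gamma$ of $A$.
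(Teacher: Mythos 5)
Your construction does prove the literal statement, but the reasoning that leads you to it contains a false step, and the detour costs you something the paper needs. You reject the ``na\"ive candidate'' $B=\langle A,g_1,\ldots,g_n\rangle$ on the grounds that conjugation by $g_i$ need not preserve $A$. That objection is beside the point: what is required is that conjugation by $g_i$ preserve $B$, not $A$, and since $g_i\in B$ this is automatic --- conjugation by an element of a group is an inner automorphism of that group, so $B^{g_i}=B$ and the restriction of $x\mapsto x^{g_i}$ to $B$ is already the desired $\sigma_i$ extending $p_i$. This na\"ive candidate is exactly the paper's proof (modulo an evident typo there, $B=\langle B,g_1,\ldots,g_n\rangle$ for $B=\langle A,g_1,\ldots,g_n\rangle$).

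Your replacement $B=\langle A^W\rangle$ with $W=\langle g_1,\ldots,g_n\rangle$ is still correct: the generating set $A^W$ is permuted by conjugation by each $g_i^{\pm 1}$, so each $\sigma_i$ is a well-defined automorphism of $B$ extending $p_i$, and $A\leq B$. However, $\langle A^W\rangle$ is the normal closure of $A$ in $\langle A,g_1,\ldots,g_n\rangle$, and a normal subgroup of a finitely generated group need not be finitely generated (e.g.\ if $\langle a,g_1\rangle$ is free of rank $2$, the normal closure of $\langle a\rangle$ is free of infinite rank). The proposition is invoked in the proof of Theorem~\ref{thm:ample_generics} to show that $\cK_t^{(n)}$ is cofinal in $\cK_p^{(n)}$, where the ambient class is $\Age(\Gamma)$, a class of \emph{finitely generated} groups; for that application $B$ must lie in $\Age(\Gamma)$, which your $B$ may fail to do. So you should revert to the candidate you discarded: it is both simpler and gives the finitely generated $B$ that the downstream argument requires.
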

    \begin{proof}
        Let $g_1,\ldots,g_n$ be witnesses for inner ultrahomogeneity of $\Gamma$ corresponding to $p_1,\ldots,p_n$ and take $B=\langle B,g_1,\ldots,g_n\rangle$ and let $\sigma_i$ be the inner automorphism given by $g_i$ for $i=1,\ldots,n$.
    \end{proof}

    We will use the following standard criterion for the existence of generic automorphisms.

    \begin{fact}
        \label{fct:cap_ample_gen}
        Let $\cK$ be a Fraïssé class and $M$ its limit.

        Fix any $n\in \bN$. Let $\cK_p^{(n)}$ be the class of $\cK$-structures with $n$-tuples of partial automorphisms.

        Then if $\cK_p^{(n)}$ has the joint embedding property and a cofinal subclass of amalgamation bases, then there is a generic $n$-tuple in $\Aut(M)$. In particular, if this is true for all $n$, then $M$ has ample generic automorphisms.
    \end{fact}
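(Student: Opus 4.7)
The plan is to follow the Kechris–Rosendal approach, recasting the existence of a generic $n$-tuple as a Baire category statement about the diagonal conjugation action of $G:=\Aut(M)$ on $G^n$. This Polish space has a clopen basis consisting of sets $[\bar p]=\{(\sigma_1,\dots,\sigma_n)\in G^n : \sigma_i\supseteq p_i\}$ indexed by $n$-tuples $\bar p$ of finite partial automorphisms of $M$. Via ultrahomogeneity of $M$, each $[\bar p]$ is canonically labelled by the isomorphism type of the $\cK_p^{(n)}$-configuration $(\langle\dom \bar p\cup\rng \bar p\rangle,\bar p)$, and $G$ acts transitively on the basic opens sharing a label.

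First I would use JEP in $\cK_p^{(n)}$ to establish topological transitivity: any two basic opens embed into a common $(C,\bar r)\in\cK_p^{(n)}$, which by ultrahomogeneity of $M$ can be taken inside $M$, producing an element of $G$ whose conjugation sends a point of $[\bar p]$ into $[\bar q]$. Next, I would construct a distinguished tuple $\bar\sigma^*\in G^n$ via a Fraïssé-style back-and-forth inside $M$: enumerate pairs of a $\cK_p^{(n)}$-configuration and a one-point extension, and recursively build an increasing chain $A_0\leq A_1\leq\cdots\leq M$ of finitely generated substructures with partial automorphism tuples $\bar p_k$ of $A_k$, at each stage drawing $(A_{k+1},\bar p_{k+1})$ from the cofinal subclass of amalgamation bases and arranging that every isomorphism type in $\cK_p^{(n)}$ and every finite extension of a previously realized configuration appear as a subconfiguration of some $(A_\ell,\bar p_\ell)$. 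Using Proposition~\ref{prop:n-eppa} we may further arrange that each $\bar p_k$ is a tuple of automorphisms of $A_{k+1}$, so in the colimit $\bar\sigma^*$ is a genuine tuple in $\Aut(M)^n$.

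Finally, I would argue comeagreness of the orbit $G\cdot\bar\sigma^*$. Given a basic open $[\bar p]$ meeting the orbit, the cofinal amalgamation-base hypothesis produces an open $V\subseteq [\bar p]$, containing a $G$-translate of $\bar\sigma^*$, such that any two points of $V$ are $G$-conjugate: one runs a back-and-forth within $M$ whose inductive step amalgamates the two `current' partial-automorphism configurations over the distinguished amalgamation base living inside $V$, using ultrahomogeneity of $M$ to realize each amalgam in $M$. Then $G\cdot V$ is open dense in $[\bar p]$, and intersecting over a countable basis of the orbit gives a comeagre set contained in $G\cdot\bar\sigma^*$. The main obstacle is precisely this last step: orchestrating the back-and-forth so that at every stage the configuration can be enlarged while remaining inside an amalgamation base, which is where the cofinality of amalgamation bases (not merely JEP plus some weaker amalgamation) is used in an essential way.
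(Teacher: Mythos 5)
The paper does not actually prove Fact~\ref{fct:cap_ample_gen}; it cites \cite[Theorem 1.2]{Ivanov}, \cite[Theorem 6.2]{KR07} and, for $n=1$, \cite[Theorem 2.1]{Tru92}. So your proposal is a reconstruction of the Kechris--Rosendal argument, and its first two steps are in the right spirit: JEP of $\cK_p^{(n)}$ gives topological transitivity of the diagonal conjugation action, and a back-and-forth through the cofinal amalgamation bases produces a candidate tuple. (One local complaint: invoking Proposition~\ref{prop:n-eppa} is out of place, since that proposition is about inner ultrahomogeneous groups while the Fact concerns an arbitrary Fraïssé class; the totality of the limit automorphisms must instead come from the back-and-forth itself, i.e.\ from the ability to enlarge domains and ranges of the partial automorphisms at each stage.)

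The genuine gap is in your final step. The claim that the amalgamation bases yield a nonempty open $V\subseteq[\bar p]$ \emph{all} of whose points are $G$-conjugate is false, and it proves too much: it would make the generic orbit equal to the open set $G\cdot V$, whereas generic orbits here are comeagre but typically have dense complement. Concretely, take $\cK$ to be the class of finite sets, $M=\bN$, $G=\Sym(\bN)$, $n=1$: the configurations $(A,p)$ with $p$ a total permutation of the finite set $A$ form a cofinal class of amalgamation bases, yet every basic open set $[p]$ with $p$ a permutation of $A$ contains both permutations of $\bN$ all of whose orbits are finite and permutations with an infinite orbit, and these are not conjugate. The back-and-forth you describe breaks down precisely here: given two arbitrary $\bar\sigma,\bar\tau\in V$, the amalgam of their current finite configurations over the base is realized \emph{somewhere} in $M$ by ultrahomogeneity, but there is no reason that $\bar\sigma$ itself extends a conjugate of that amalgam. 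The correct argument instead associates to each amalgamation base $(A,\bar p)$ realized in $M$ and each extension $(B,\bar q)$ of it a dense open set of tuples (those which either avoid the relevant translate of $[\bar p]$ or realize a conjugate of $(B,\bar q)$ over it); the comeagre orbit is the intersection of these countably many dense open sets together with the transitivity data, and only the tuples in that intersection are pairwise conjugate by back-and-forth. As written, your step establishes neither comeagreness nor the single-orbit property.
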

    \begin{proof}
        This follows from \cite[Theorem 1.2]{Ivanov} or \cite[Theorem 6.2]{KR07}. The special case of $n=1$ is \cite[Theorem 2.1]{Tru92}.
    \end{proof}

    \begin{theorem}
        \label{thm:ample_generics}
        Suppose $\Gamma$ is a countable inner ultrahomogeneous group and $\Age(\Gamma)$ is closed under $\times$ or $\times\bZ$. Then $\Gamma$ has ample generic automorphisms.
    \end{theorem}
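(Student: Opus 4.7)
The plan is to apply Fact~\ref{fct:cap_ample_gen} to $\cK\coloneqq\Age(\Gamma)$, whose Fraïssé limit is $\Gamma$. So for each $n\in\bN$, I need to verify that $\cK_p^{(n)}$ has the joint embedding property and a cofinal subclass of amalgamation bases. The natural candidate for the latter is the subclass $\cK^{(n)}_\sigma\subseteq\cK_p^{(n)}$ of pairs $(A,\bar\sigma)$ in which each $\sigma_i$ is an actual automorphism of $A$, not merely a partial one. Cofinality of $\cK^{(n)}_\sigma$ in $\cK_p^{(n)}$ is essentially the content of Proposition~\ref{prop:n-eppa}: any $(A,\bar p)\in\cK_p^{(n)}$ is dominated by $(B,\bar\sigma)\in\cK^{(n)}_\sigma$ with $\sigma_i$ the inner automorphism of $B$ given by a witness of inner ultrahomogeneity for $p_i$.

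Next I would show that elements of $\cK^{(n)}_\sigma$ are amalgamation bases in $\cK_p^{(n)}$. Concretely, suppose $(A,\bar\sigma)\in\cK^{(n)}_\sigma$ embeds into $(B,\bar p_B),(C,\bar p_C)\in\cK_p^{(n)}$ with $p_{B,i}\restr_A=\sigma_i=p_{C,i}\restr_A$ for each $i$. This is precisely the setting of Lemma~\ref{lem:eppa_amalgam} (the hypothesis on $\cK$ being closed under $\times$ or $\times\bZ$ is exactly the hypothesis of the theorem), which produces $D\in\cK$ containing $B,C$ over $A$ together with $g_1,\ldots,g_n\in D$ such that conjugation by $g_i$ extends both $p_{B,i}$ and $p_{C,i}$. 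The partial automorphisms of $D$ obtained by restricting the inner automorphisms $\operatorname{inn}(g_i)$ to any chosen finitely generated superstructure give the required amalgam in $\cK_p^{(n)}$.

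For JEP of $\cK_p^{(n)}$, given $(A,\bar p_A),(B,\bar p_B)\in\cK_p^{(n)}$, first apply Proposition~\ref{prop:n-eppa} to push both into $\cK^{(n)}_\sigma$, obtaining $(A',\bar\sigma_A)$ and $(B',\bar\sigma_B)$. Then use Lemma~\ref{lem:eppa_amalgam} with the trivial group as the common base (the compatibility condition $p_i\restr_{\{1\}}=q_i\restr_{\{1\}}$ holds vacuously), and read off a joint embedding exactly as in the previous paragraph.

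The main obstacle is really just correctly organising the bookkeeping so that Lemma~\ref{lem:eppa_amalgam} applies with the right data; all the structural content — that $\cK$ has AP (it is a Fraïssé class by assumption), that partial automorphisms can be extended to full ones (Proposition~\ref{prop:n-eppa}), and that amalgams carrying witnesses of inner EPPA exist (Lemma~\ref{lem:eppa_amalgam}) — is already in place. The countability of $\Gamma$ is used implicitly to ensure that $\cK$ is essentially countable so that Fact~\ref{fct:cap_ample_gen} produces generic $n$-tuples in $\Aut(\Gamma)$ rather than in $\Aut$ of some other Fraïssé limit.
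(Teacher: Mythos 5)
Your proposal is correct and follows essentially the same route as the paper: apply Fact~\ref{fct:cap_ample_gen}, take the cofinal subclass of $\cK$-groups equipped with total automorphisms (cofinal by Proposition~\ref{prop:n-eppa}), show these are amalgamation bases via Lemma~\ref{lem:eppa_amalgam}, and obtain JEP by amalgamating over the trivial group. The only cosmetic difference is that the paper phrases JEP directly via extending partial automorphisms to fix the identity element, rather than first passing to total automorphisms, but the mechanism is identical.
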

    \begin{proof}
        Let $\cK=\Age(M)$, so that $M$ is the Fraïssé limit of $\cK$ (since it is countable and ultrahomogeneous). By Fact~\ref{fct:cap_ample_gen}, it suffices to show that for each $n\in\bN$, $\cK_p^{(n)}$ has JEP and a cofinal subclass of amalgamation bases. Note that Lemma~\ref{lem:eppa_amalgam} implies that the class of $\cK_t^{(n)}$ of $\cK$-groups with $n$-tuples of (total) automorphisms consists of amalgamation bases. This class is cofinal by by Proposition~\ref{prop:n-eppa}. JEP of $\cK_p^{(n)}$ follows, since every partial automorphism can be extended to one with the identity element in its domain (which is then necessarily fixed), and the trivial group with an $n$-tuple of trivial automorphisms is in $\cK_t^{(n)}$.
    \end{proof}

    \begin{remark}
        If $\Gamma$ is torsion, then in order to obtain a \emph{single} generic automorphism, it is actually enough to assume that $\Age(\Gamma)$ is closed under products with all finite cyclic groups.
    \end{remark}

    \begin{remark}
        Proposition~\ref{prop:implications_between_props} implies that Theorem~\ref{thm:ample_generics} applies in particular to any countable inner ultrahomogeneous group whose age is closed under $*\bZ$ or under finitary HNN-extensions.
    \end{remark}

    \begin{remark}
        Note that while under hypotheses of Theorem~\ref{thm:ample_generics} generic automorphisms exist, and by inner ultrahomogeneity, they can be arbitrarily closely approximated by inner automorphisms, an inner automorphism of a countable group $\Gamma$ cannot be generic (unless $\Aut(\Gamma)$ is trivial). This follows from the fact that inner automorphisms form a countable normal subgroup of $\Aut(\Gamma)$.
    \end{remark}

    \section{Examples}
    \label{sec:examples}
    \subsection{Finite groups}
    As noted in \hyperref[mainthm]{Main Theorem}, there are no finite inner ultrahomogeneous groups of more than $6$ elements. It follows that there are exactly three isomorphism classes of finite inner ultrahomogeneous group: the trivial group, the cyclic group of order $2$, and the nonabelian group of order $6$. (Incidentally, those are the first three symmetric groups.)
    \begin{example}
        The cyclic group of order $2$ is inner ultrahomogeneous and abelian (and it is the only such nontrivial group, up to isomorphism), while $S_3$ is inner ultrahomogeneous, has trivial centre, but it is not simple (its derived subgroup is a nontrivial proper normal subgroup).
    \end{example}

    \subsection{Hall's universal locally finite group}
    Let $\Gamma_F$ be Hall's universal group, i.e.\ the Fraïssé limit of the class of all finite groups. In fact, the original paper of Hall giving the first construction (which was explicit and did not use Fraïssé theory) already contains a proof that $\Gamma_F$ is inner ultrahomogeneous \cite[Lemma 3]{Hal59} and the fact that the class of finite groups has inner EPPA (Fact~\ref{fct:finite_hnn}).
    \begin{remark}
        The universal theory of $\Gamma_F$ is the universal theory of finite groups (this follows from the fact that it is locally finite and every finite group embeds into it).
    \end{remark}
    \begin{remark}
        By \cite{Slo81}, the universal theory of finite groups (= the universal theory of $\Gamma_F$) is not decidable.
    \end{remark}

    \begin{example}
        \label{ex:hall_group}
        By \hyperref[mainthm]{Main Theorem} and the above remarks, $\Gamma_F$ is uniformly simple, it has ample generic automorphisms, it is not $\aleph_0$-saturated, its theory is not decidable, does not admit q.e., is not small, is straightly maximal (so it has SOP, TP$_2$ and the order property), and it has IP$_n$ for all $n$.
    \end{example}
    (Note that, as noted in the introduction, at least some of the properties of $\Gamma_F$ noted in Example~\ref{ex:hall_group} were known, particularly the fact that it is unstable and has ample generic automorphisms.)

    \subsection{Universal locally recursively presentable group}
    Consider the class of finitely generated, recursively presentable groups.
    Corollary~\ref{cor:hnn_Fraïssé} easily implies that it is a Fraïssé class with inner EPPA.

    Thus, it has a Fraïssé limit $\Gamma_R$ which is inner ultrahomogeneous. Equivalently, we can define $\Gamma_R$ as the limit of the class of finitely presentable groups --- it is a non-hereditary class which still has inner EPPA, amalgamation and joint embedding, and its hereditary closure is the above class by Higman's embedding theorem, so the limit is the same.

    \begin{remark}
        Similarly to $\Gamma_F$, universal theory of $\Gamma_R$ is the universal theory of finitely generated recursively presentable groups. Since every such group embeds in a finitely presentable group, this is the same as the universal theory of finitely presentable groups. Undecidability of the word problem for groups implies that this universal theory is undecidable.
    \end{remark}
    \begin{remark}
        It is not hard to see that every existential statement true in some group is true in a finitely presentable group, so the universal theory of $\Gamma_R$ is in fact the universal theory of groups.
    \end{remark}

    \begin{remark}
        Since the universal theory of groups is not the universal theory of finite groups (briefly, because a finitely presentable group satisfies the latter if and only if it is residually finite), the universal theories of $\Gamma_F$ and $\Gamma_R$ do not coincide. In particular, $\Gamma_F$ and $\Gamma_R$ are not elementarily equivalent.
    \end{remark}

    We finish by noting that the following relation gives us a stationary independence relation in $\Gamma_R$ (in the sense of \cite{TZ13}).
    \begin{definition}
        Given three finite subsets $A,B,C$ of a group $G$, let us say that \emph{$A,B$ are freely independent over $C$}
        if $\langle A,B,C\rangle$ is naturally isomorphic to $\langle A,B\rangle*_{\langle B\rangle}\langle B,C\rangle$.
    \end{definition}

    \begin{proposition}
        \label{prop:free_stationary_indep}
        Suppose $G$ is an ultrahomogeneous group and $\Age(G)$ is closed under finitary amalgamated free products. Then free independence is a stationary independence relation on finite subsets of $G$.
    \end{proposition}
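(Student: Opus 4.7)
The plan is to verify directly the six standard axioms of a stationary independence relation in the sense of \cite{TZ13}: invariance (under $\Aut(G)$), monotonicity, symmetry, transitivity, existence, and stationarity. Throughout, I would freely use that for a finite subset $X \subseteq G$ the subgroup $\langle X \rangle$ lies in $\Age(G)$, and that by ultrahomogeneity, a partial isomorphism between finitely generated subgroups is the same data as an elementary map.

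Invariance is immediate from the definition, and symmetry from the natural isomorphism $\langle A,C\rangle *_{\langle C\rangle} \langle B,C\rangle \cong \langle B,C\rangle *_{\langle C\rangle} \langle A,C\rangle$. For existence, given finite $A,B,C \subseteq G$, I would form the amalgamated free product $P = \langle A,C\rangle *_{\langle C\rangle} \langle B,C\rangle$, which by hypothesis lies in $\Age(G)$. Ultrahomogeneity of $G$ lets us embed $P$ into $G$ extending the inclusion $\langle B,C\rangle \hookrightarrow G$; the image $A'$ of $A$ then satisfies $\qftp(A'/C) = \qftp(A/C)$ and $A' \forkindep_C B$ by construction. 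For stationarity, given $A \equiv_C A'$ with both independent from $B$ over $C$, ultrahomogeneity gives an isomorphism $\langle A,C\rangle \to \langle A',C\rangle$ fixing $\langle C \rangle$; combining this with $\mathrm{id}_{\langle B,C\rangle}$ via the universal property of the amalgamated free product produces an isomorphism $\langle A,B,C\rangle \to \langle A',B,C\rangle$ fixing $\langle B,C\rangle$ pointwise, which by ultrahomogeneity again extends to an automorphism of $G$.

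The two axioms requiring a bit more care are monotonicity and transitivity. For monotonicity, suppose $A \forkindep_C BD$. To get $A \forkindep_C B$, I would invoke the normal form / subgroup theorem for amalgamated free products (\cite{LS01}): the subgroup of $\langle A,C\rangle *_{\langle C\rangle} \langle BD,C\rangle$ generated by $\langle A,C\rangle$ and $\langle B,C\rangle$ is precisely $\langle A,C\rangle *_{\langle C\rangle} \langle B,C\rangle$. To get $A \forkindep_{BC} D$, one rewrites $\langle A,B,C,D\rangle$ as an amalgam over $\langle B,C\rangle$ by grouping: $\langle A,BC\rangle *_{\langle BC\rangle} \langle D,BC\rangle$, which is routine. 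Transitivity is essentially associativity of amalgamated free products: if $\langle A,B,C\rangle \cong \langle A,C\rangle *_{\langle C\rangle} \langle B,C\rangle$ and $\langle A,B,C,D\rangle \cong \langle A,B,C\rangle *_{\langle B,C\rangle} \langle B,C,D\rangle$, then pasting the two pushout squares and using the universal property of the outer one yields $\langle A,B,C,D\rangle \cong \langle A,C\rangle *_{\langle C\rangle} \langle B,C,D\rangle$, as desired.

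I expect the principal obstacle to be the bookkeeping around monotonicity and transitivity, where one must check that the subgroup-of-an-amalgam identifications really are the claimed amalgamated free products, rather than merely epimorphic quotients of them. This is where the normal form theorem does the actual work, and where one must be careful that ``freely independent'' is interpreted to mean an isomorphism and not merely a surjection from the amalgamated free product. Apart from that, the argument reduces to category-theoretic manipulation of pushouts of groups together with ultrahomogeneity.
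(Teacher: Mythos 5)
Your proposal is correct and follows essentially the same route as the paper: a direct verification of the axioms from \cite{TZ13}, with existence and stationarity coming from ultrahomogeneity plus closure under amalgamated free products, and with monotonicity and transitivity reduced to the subgroup and associativity properties of amalgamated free products (the paper records exactly these two observations, namely $A*_BC\leq A*_B\langle C,D\rangle$ and $(A*_BC)*_{C}\langle C,D\rangle\cong A*_B(C*_C\langle C,D\rangle)$). Your added emphasis on the normal form theorem guaranteeing an isomorphism rather than a mere epimorphism is a correct elaboration of the same point.
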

    \begin{proof}
        We check the axioms as listed in \cite[Definition 2.1]{TZ13}. Invariance is clear, as is symmetry. Existence follows from ultrahomogeneity and the assumption that $\Age(G)$ is closed under amalgamated free products. Stationarity follows from ultrahomogeneity.

        Monotonicity and transitivity follow from the observations that if $B\leq A,C,D\leq \Gamma$, then $A*_BC\leq A*_B\langle C,D\rangle$ and $(A*_BC)*_{C}\langle C,D\rangle\cong A*_B(C*_C\langle C, D\rangle)$.
    \end{proof}

    Note that the free independence is not stationary in saturated extensions of $\Gamma_R$, as they do not have quantifier elimination, see Proposition~\ref{prop:omitted_type} and Proposition~\ref{prop:no_qe}.

    \begin{example}
        \label{ex:lrec_group}
        By \hyperref[mainthm]{Main Theorem} and the above remarks, $\Gamma_R$ is uniformly simple, it has ample generic automorphisms, it is not $\aleph_0$-saturated, its theory is not decidable, does not admit q.e., is not small, is straightly maximal (so SOP, TP$_2$ and the order property), and it has IP$_n$ for all $n$.

        Moreover $\Gamma_R\not\equiv\Gamma_F$ and $\Gamma_R$ admits a stationary independence relation.
    \end{example}

    \subsection{Closure under HNN-extensions}
    Let us start with any group $\Gamma_0$. We recursively define an ascending sequence of groups of length $\omega$.

    Suppose we have $\Gamma_n$. Then enumerate all finite partial automorphisms $\Gamma_n\to \Gamma_n$ as $(p_i)_{i\in I}$. Then let $\Gamma_{n+1}$ be the HNN-extension $\Gamma_n*_{(p_i)_{i\in I}}$ (with a set of stable letters indexed by $I$).
    Finally, put $\Gamma=\bigcup_n\Gamma_n$. Then it is easy to check (using Corollary~\ref{cor:hnn_uh}) that $\Gamma$ is inner ultrahomogeneous of cardinality at most $\lvert \Gamma_0\rvert+\aleph_0$.

    \begin{example}
        \label{ex:any_group_in_ihg}
        The group $\Gamma$ constructed above is an inner ultrahomogeneous group containing an arbitrary group $\Gamma_0$, of cardinality $\langle \Gamma_0\rangle+\aleph_0$. By \hyperref[mainthm]{Main Theorem} it follows that e.g.\ it is it is uniformly simple, not $\aleph_0$-saturated, it has TP$_2$ and SOP, and if it is countable, then it has ample generic automorphisms. Moreover, by Proposition~\ref{prop:implications_between_props}, $\Age(\Gamma)$ is closed under finitary amalgamated free products, so by Proposition~\ref{prop:free_stationary_indep}, free independence is a stationary independence relation on $\Gamma$.

        Finally, by the torsion theorem for HNN extensions (Fact~\ref{fct:torsion_hnn}), the only finite orders of elements of $\Gamma$ are the finite orders of elements of $\Gamma_0$, so in particular, we can obtain groups which are torsion-free in this way, or groups which are not divisible (e.g.\ if $\Gamma_0$ is a nontrivial finite group).
    \end{example}

    \begin{example}
        If $\cK_0$ is any set of groups, then we can build a group $\Gamma_0$ as the direct sum of elements of $\cK_0$ and then apply the above construction. In particular, if $\cK_0$ is a countable set of finitely generated groups, the age of the group $\Gamma$ we obtain is a Fraïssé class of groups with inner EPPA containing $\cK_0$. Alternately, we can do this by simply closing $\cK_0$ under direct products and finitary HNN-extensions (and finitely generated embedded subgroups) and applying Corollary~\ref{cor:hnn_Fraïssé}.
    \end{example}

    \subsection{Existentially closed groups}
    In a similar vein to the previous example, as noted in e.g.\ \cite[Exercise 9 for Section 8.1]{Hod}, any group which is existentially closed in the class of groups is inner ultrahomogeneous. Indeed, if $\Gamma$ is existentially closed and $p\colon \Gamma\to\Gamma$ is a finite partial automorphism, then the HNN-extension $\Gamma*_p$ satisfies $\exists x (\dom(p)^x=p(\dom p))$, so the same must be true in $\Gamma$.
    Similarly, any torsion-free group which is existentially closed in the class of torsion-free groups is inner ultrahomogeneous.

    In particular, one can show that the group $\Gamma_R$ is existentially closed, as is any ultrahomogeneous group whose age is the class of finitely generated, recursively presentable groups.

    Likewise, any locally finite group which is existentially closed in the class of locally finite groups is inner ultrahomogeneous. This follows from e.g.\ the argument sketched after \cite[Definition 0.1]{Sh17} (see also \cite[Exercise 7 for Section 11.6]{Hod}). Note that in this class, there is only one countable group, namely $\Gamma_F$. Furthermore, as noted in \cite[Claim 0.14]{Sh17}, inner ultrahomogeneity and existential closedness are equivalent for a locally finite group whose age is the class of all finite groups (by Proposition~\ref{prop:char_inner_uh}, in this case, inner ultrahomogeneity is also equivalent to ultrahomogeneity).

    \begin{example}
        These groups are obviously divisible, infinite, and uniformly simple, but \hyperref[mainthm]{Main Theorem} implies easily that all these groups are also not $\aleph_0$-saturated, they have TP$_2$ and SOP (and are straightly maximal, apart from the torsion-free ones). It is not clear whether all the countable ones have ample generic automorphisms.
    \end{example}

    \subsection{An example with nontrivial centre}
    In this section, we will use a variant of the construction from Example~\ref{ex:any_group_in_ihg} to get an example of an infinite inner ultrahomogeneous group whose centre is nontrivial (so in particular, it is not simple).

    First, let us make the following observation.
    \begin{fact}
        \label{fct:no_new_finite_order_elts}
        Suppose $G$ is a group and an $n\in\bN$ is such that all elements of order $n$ are central. Suppose in addition that $p$ is a partial automorphism of $G$ such that all elements of order $n$ are in $\dom p$ and fixed by $p$. Then all elements of order $n$ in $G*_p$ are in $G$.
    \end{fact}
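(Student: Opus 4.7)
The plan is to reduce the statement to a centrality claim via the torsion theorem for HNN extensions (Fact~\ref{fct:torsion_hnn}) and then exploit the hypothesis on $p$.

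First, let $g \in G$ be any element of order $n$. Since $g \in \dom p$ with $p(g) = g$, the defining relation of the HNN extension gives $g^t = p(g) = g$, so $g$ commutes with the stable letter $t$. By hypothesis $g$ is central in $G$, so it commutes with every generator of $G *_p$, and hence $g$ is central in the whole HNN extension. Thus every element of $G$ of order $n$ lies in $Z(G *_p)$.

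Now let $h \in G *_p$ be any element of order $n$. By Fact~\ref{fct:torsion_hnn} (the torsion theorem for HNN extensions), $h$ is conjugate to some element $g \in G$, which must itself have order $n$. But by the previous paragraph, $g$ is central in $G *_p$, so its conjugacy class is $\{g\}$ and therefore $h = g \in G$.

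There is essentially no obstacle here: the only nontrivial ingredient is the torsion theorem, which is already stated as Fact~\ref{fct:torsion_hnn}. The role of the centrality hypothesis is purely to promote ``fixed by $p$'' (commutation with $t$) together with ``central in $G$'' (commutation with $G$) into ``central in $G *_p$''; once this is observed, the argument is a one-line application of the torsion theorem.
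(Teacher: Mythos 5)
Your proof is correct and follows essentially the same route as the paper: apply the torsion theorem for HNN extensions to reduce to elements of $G$, then observe that the hypotheses force those elements to be central in $G*_p$, so conjugacy collapses. Your write-up merely spells out the centrality step (commutation with $G$ plus commutation with the stable letter) that the paper leaves implicit.
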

    \begin{proof}
        By the torsion theorem for HNN extensions (Fact~\ref{fct:torsion_hnn}), every element of order $n$ in $G*_p$ is conjugate to an element of $G$. But by hypothesis, every element of order $n$ in $G$ is central in $G*_p$. The conclusion follows.
    \end{proof}

    Consider the following construction. Let $\Gamma_0$ be cyclic of order $2$, generated by $g_0$. Then given $G_n$, we construct $\Gamma_{n+1}$ as follows. Let $P$ be the set of all isomorphisms between finitely generated subgroups of $\Gamma_n$ which contain $g_0$. Then $\Gamma_{n+1}$ is the HNN-extension $\Gamma_n*_{P}$.

    Finally, put $\Gamma=\bigcup_n \Gamma_n$; clearly, it is a countable group. Furthermore, note that by Fact~\ref{fct:no_new_finite_order_elts} and straightforward induction, $g_0$ is the only nonidentity finite order element in each $\Gamma_n$, and hence also in $\Gamma$. In particular, it is central, and if $A\leq \Gamma_n$ is a finitely generated subgroup, then either $g_0\in A$ or $\langle g_0,A\rangle\cong \langle g_0\rangle\times A$ (because $g_0$ commutes with $A$ and is not in $A$).

    It follows that every isomorphism between finitely generated subgroups of $\Gamma$ can be extended to an isomorphism between finitely generated subgroups containing $g_0$. This easily implies that $\Gamma$ is in fact inner ultrahomogeneous.

    \begin{example}
        \label{ex:nontrivial_centre}
        In summary, $\Gamma$ constructed above is a countably infinite inner ultrahomogeneous group which has nontrivial centre and an unique nonidentity element of finite order (so it is not simple, although centre is its only nontrivial proper normal subgroup). Since its age is closed under $\times\bZ$, many consequences of \hyperref[mainthm]{Main Theorem} apply to it. In particular, it embeds every countable torsion-free abelian group, it has SOP and TP$_2$ and it has ample generic automorphisms.
    \end{example}

    \begin{remark}
        Instead of a cyclic group of order $2$, we could start with any group which has a unique element of order $2$, for instance $\bQ/\bZ$. Then the resulting group will be inner ultrahomogeneous with nontrivial centre and also have elements of all finite orders. Thus, even more consequences of \hyperref[mainthm]{Main Theorem} would apply to it.
    \end{remark}

    \begin{remark}
        A similar argument can be used to show that a group which is existentially closed in the class of groups having at most one element of order $2$ is inner ultrahomogeneous with nontrivial centre.
    \end{remark}

    \subsection{Finite exponent groups?}
    The following question remains open.
    \begin{question}
        \label{qu:finite_exponent}
        Is there an infinite finite exponent inner ultrahomogeneous group? If yes, can it be stable?
    \end{question}

    By \hyperref[mainthm]{Main Theorem} we can say the following for any hypothetical infinite inner ultrahomogeneous $\Gamma$ of finite exponent: it is necessarily of exponent smaller than $(2^{100})!$, it contains no copy of $(\bZ/2\bZ)^6$, it is not $\aleph_0$-saturated, it has trivial centre and it is isomorphic to $\Aut(\Gamma)$ (which is discrete, so $\Gamma$ has no generic automorphisms).

    Moreover, it is not hard to see that inner ultrahomogeneity is inherited by elementary subgroups, so by the downwards Löwenheim-Skolem, if such $\Gamma$ exists, there is a countable one, which is then a Fraïssé limit. Thus, we may rephrase Question~\ref{qu:finite_exponent} in the following way:
    \begin{question}
        Is there a Fraïssé class of groups of uniformly bounded exponent with inner EPPA?
    \end{question}

    In fact, it is not clear whether there are infinite torsion inner ultrahomogeneous groups which are not existentially closed locally finite.

    \section*{Acknowledgements}
    I would like to thank Wiesław Kubiś for suggesting to investigate the class of finitely presentable groups. I would also like to thank Adam Bartoš, Itay Kaplan, Alex Kruckman and Krzysztof Krupiński for helpful discussions, and Martin Ziegler for pointing out that the group $\Gamma_R$ is existentially closed.
    \printbibliography
\end{document}